\UseRawInputEncoding


\documentclass[hyperref]{article}

\usepackage{amsfonts}
\usepackage{subcaption}
\usepackage{color}
\usepackage{graphicx}
\usepackage{float}
\usepackage{caption}
\usepackage{amssymb}
\usepackage{amsmath}
\allowdisplaybreaks[4]
\usepackage{mathtools}
\usepackage[thmmarks,amsmath]{ntheorem}
\usepackage{bm}
\usepackage[colorlinks,linkcolor=cyan,citecolor=cyan]{hyperref}
\usepackage{extarrows}
\usepackage{mathrsfs}
\usepackage[font=footnotesize,skip=0pt,textfont=rm,labelfont=rm]{caption}
\usepackage{booktabs}
\usepackage{appendix}
\usepackage[subfigure]{tocloft}
\usepackage{theorem}
\usepackage[a4paper,left=2.5cm,right=2.5cm,top=2.5cm,bottom=2.5cm]{geometry}
\usepackage{multirow}
\usepackage{url}
\usepackage{diagbox}
\usepackage{cleveref}
\usepackage[round,sort]{natbib}
\setcitestyle{authoryear,round}
\usepackage[linesnumbered,boxed,ruled,commentsnumbered]{algorithm2e}
\usepackage{comment}

\setcounter{MaxMatrixCols}{10}

{
	\theoremstyle{nonumberplain}
	\theoremheaderfont{\bfseries}
	\theorembodyfont{\normalfont}
	\theoremsymbol{\mbox{$\Box$}}
	\newtheorem{proof}{Proof}
}
\newtheorem{theorem}{Theorem}[section]
\newtheorem{lemma}{Lemma}[section]
\newtheorem{definition}{Definition}[section]
\newtheorem{assumption}{Assumption}[section]

\newtheorem{corollary}{Corollary}[section]
\newtheorem{proposition}{Proposition}[section]
{
	\theoremheaderfont{\bfseries}
	\theorembodyfont{\normalfont}
	\newtheorem{remark}{Remark}[section]
}

\graphicspath{{figure/}}                                 
\geometry{} 
\linespread{1.2}

\usepackage{times}  

\bibliographystyle{plainnat}

\begin{document}
	
	\title{\textbf{On the Maximum and Minimum of a Multivariate Poisson Distribution}}
	\date{18th September 2025}
	\author{ Zheng Liu$^1$, Feifan Shi$^{1}$, Jing Yao$^{1}$\thanks{
			Corresponding author, e-mail: j.yao@suda.edu.cn}, Yang Yang$^{1}$ \\
		{{\small $^1$ Center for Financial Engineering, Soochow University, China }}}
	\maketitle
	
	\begin{abstract}
		In this paper, we investigate the cumulative distribution functions (CDFs) of
		the maximum and minimum of multivariate Poisson distributions with three dependence structures, namely, the common shock, comonotonic shock and thinning-dependence models. In particular, we formulate the definition of a thinning-dependent multivariate Poisson distribution based on \cite{wang2005correlated}. We derive explicit CDFs of the maximum and minimum of the multivariate Poisson random vectors and conduct asymptotic analyses on them. Our results reveal the substantial difference between the three dependence structures for multivariate Poisson distribution and may suggest an alternative method for studying the dependence for other multivariate distributions. We further provide numerical examples demonstrating obtained results. 
		\newline
		\textbf{Keywords:}\quad Maximum; Minimum; Multivariate Poisson distribution; Common shock structure; Comonotonic shock structure; Thinning-dependence structure.
	\end{abstract}
		\section{Introduction}
	
	Studies on the distribution of the maximum (minimum) of a $d$-dimension random vector $\bm{X}=\left( X_{1},\cdots ,X_{d}\right) $ have been garnering considerable ongoing attention in the literature. Such a distribution of the maximum (minimum) serves as a tailor-made tool for modeling extreme events in diverse fields. For instance, it is widely employed in hydrology \citep{salas2018techniques}, biostatistics \citep{cheng2008statistical}, as well as insurance and finance applications \citep{embrechts2013modelling}. Therefore, the investigation of the distribution of $M^{\bm{X}}=\max \left( 
	X_{1},\cdots ,X_{d}\right) $ or equivalently that of $m^{\bm{X}}=\min \left(
	X_{1},\cdots ,X_{d}\right) $ holds particular importance and many scholars 
	have delved into this topic in the literature.
	
	For the classic normal distribution, in the case of $d=2$ (i.e., the bivariate normal distribution),  \cite{basu1978identifiability} investigated the distribution of $M^{\bm{X}}$, while \cite{cain1994moment} obtained its first two moments using the moment-generating function. Building upon this work,  \cite{ker2001maximum} examined how the mean and variance of $M^{\bm{X}}$ behave with changes in the mean, variance, and covariance of $\bm{X}$. When considering $d\geq 3$ (i.e., the multivariate normal distribution case), \cite{nadarajah2019distribution} derived explicit expressions for the probability density function (PDF) and moments of $M^{\bm{X}}$. Furthermore, as a variant of the normal distribution, the log-normal distribution is frequently utilized for modeling stochastic price/loss in finance and insurance. Its maximum and minimum hold considerable significance in applications, and these distributions have been studied by \cite{lien1986moments, lien2005minimum}.
	
	For other distributions,  \cite{jamalizadeh2010distributions} investigated the distributions of order statistics and linear combinations of order statistics derived from an elliptical distribution. They showed that these distributions can be expressed as mixtures of unified skew-elliptical distributions. \cite{hakamipour2011extremes} derived the distributional properties of the minimum and maximum when $\bm{X}$ has a bivariate Pareto distribution, and this result was extended to the multivariate case by \cite{nadarajah2013maximum}. Unlike previous studies, \cite{arellano2008exact} derived a more general expression for the maximum distribution applicable to absolutely continuous random vectors, and they specifically investigated the scenario where the random vector follows an elliptically contoured distribution.
	
	However, as mentioned above, the existing literature focuses on continuous random vectors, leaving a research gap concerning multivariate discrete distributions. The primary reasons for this gap could be two-fold. On one hand, discrete random vectors do not have PDF, rendering many abovementioned methods, such as \cite{arellano2008exact}'s general expression, inapplicable. For discrete distributions, we are limited to study the cumulative distribution function (CDF) and probability mass function (PMF). On the other hand, the probability that multiple variables within a random vector reach maximum or minimum values simultaneously is strictly larger than 0, i.e.  
	\begin{equation*}
		\mathbb{P}\Bigl(\exists\, I \subseteq \{1,\dots,d\}, \, |I|\geq 2: \; 
		M^{\bm{X}} = X_i \;\; \forall i\in I\Bigr) > 0,~ \left( \text{or}\
		\ \mathbb{P}\Bigl(\exists\, I \subseteq \{1,\dots,d\}, \, |I|\geq 2: \; 
		m^{\bm{X}} = X_i \;\; \forall i\in I\Bigr) > 0\right) ,
	\end{equation*}
	which makes studying the maximum (minimum) of multivariate discrete random vectors even more challenging, especially for vectors exhibiting dependence.
	In the continuous case, these probabilities are all equal to zero.
	
	Despite these challenges, addressing this research gap is crucial due to the widespread applications and significance of discrete distributions in various fields. Among discrete distributions, the Poisson distribution, as a fundamental cornerstone in probability theory, plays a pivotal role in mathematical statistics, financial risk management, and biology \citep[see][]{lindskog2003common,chua2005poisson,inouye2017review,bailey1995statistical}. In this regard, we focus on the distribution of maximum and minimum in the context of multivariate Poisson distributions in this paper. Particularly, the common shock structure \citep{yuen2015optimal} is one of the most prevalent and effectively depicts the dependence on many real-world events. For instance, within the insurance industry, the claims of different types of insurance business are often dependent. A natural disaster may trigger various types of insurance claims, including but not limited to medical, death, and property claims, etc. Such systematic impact of the natural disaster can be captured by a common shock in generating multivariate claims. Furthermore,  \cite{schulz2021multivariate} pointed out that the common shock structure inherently imposes constraints on pairwise covariances, requiring them to be positive and identical, which may not accurately represent some real-world scenarios. To overcome this limitation,  \cite{schulz2021multivariate} extended the common shock structure to a comonotonic shock structure. This extension provides enhanced flexibility in capturing the underlying dependence structure. By contrast, the thinning-dependence structure, introduced by \cite{wang2005correlated} to characterize continuous-time risk models in credit risk theory and the insurance business, incorporates a number of common background risks that may cause common claims of $X_{i},i=1,...,d$. Likewise, in the insurance industry example mentioned earlier, the thinning-dependence structure not only captures the impact of background common risk sources (e.g. a natural disaster) but also gives the probabilities to different types of insurance claims ($X_{i}$) triggered by the same background risk sources, offering another flexible dependence with a more interpretive structure.
	
	Although the three dependence structures share a similar idea by correlating marginal Poisson random variables via certain common factors, they distinguish from each other in how the common factors contribute to the marginal random variables $X_{i},i=1,\dots ,d$. The common shock structure utilizes a unique random variable $Y_{0}$ to depict the impact of the common claim. This seems intuitive and straightforward, but also imposes specific constraints. The comonotonic shock is a natural extension to common shock. It replaces the unique random shock variable $Y_{0}$ in marginal 
	random variables by a comonotonic random vector\footnote{
		A random vector $(Z_{1},\dots ,Z_{d})$ is comonotonic if and only if each 
		component can be represented in terms of a common underlying uniform random 
		variable on the unit interval. That is, for a uniform random variable $U\sim
		\mathcal{U}(0,1)$, the comonotonic vector can be written as  
		\begin{equation*}
			\left( Z_{1},\dots ,Z_{d}\right) =\left( F_{1}^{-1}(U),\dots
			,F_{d}^{-1}(U)\right) ,
		\end{equation*}
		where for each $j\in \{1,\dots ,d\}$, $F_{j}$ denotes the distribution of $
		Z_{j}$. More details can be found in \cite{schulz2019multivariate}. 
		Moreover, in this paper, for a real function $F$, $F^{-1}(u)=\inf \{t\in  
		\mathbb{R}:F(t)\geq u\}$.} $\bm{Z}=(Z_{1},\dots ,Z_{d})$, providing more flexible \textquotedblleft shocks\textquotedblright\ in the dependent structures. Nevertheless, both common shock and comonotonic shock structures incorporate the common factor as \textquotedblleft shocks\textquotedblright\ \textit{directly} into the system. Specifically, they add the \textquotedblleft shocks\textquotedblright\ (the shock variable $Y_{0}$ or the shock vector $\bm{Z}$) into another independent random vector $\bm{Y}=(Y_{1},\dots ,Y_{d})$, which stands for the idiosyncratic marginal shocks.   
	On the contrary, simultaneous claims (shocks) in the thinning-dependence structure are triggered by commonly shared background random variables (risk sources), tying marginal Poisson random variables up via an explanatory probabilistic mechanism. We present more details on the three dependence structures in \Cref{sec2}.
	
	In this paper, we explore the maximum and minimum of the multivariate
	Poisson distribution with dependence. Our contributions to the existing
	literature can be summarized in threefold. Firstly, to the best of our
	knowledge, we are the first to investigate the distribution of maximum and
	minimum of the multivariate Poisson distribution with dependence. More
	specifically, we explicitly derive the CDFs of the $M^{\bm{X}}$ and $m^{%
		\bm{X}}$ of the multivariate Poisson distribution under the three
	aforementioned dependent structures, namely, the common shock, comonotonic
	shock, and thinning-dependence structures. In doing so, we theoretically
	formulate a multivariate Poisson distribution with thinning-dependence
	structure, which broadens the scope of this dependence structure from
	stochastic processes to multivariate probability distributions, offering a pedagogical contribution. 
	
	Secondly, we perform asymptotic analysis of all derived CDFs and obtain several interesting results that underpin the substantial differences between the three dependence structures. For example, when the dimension $d\rightarrow \infty $, we find that the asymptotic behaviors of derived CDFs under the common shock and comonotonic shock structures are similar, whereas the one under the thinning-dependence structure exhibits evident discrepancy. Specifically, the asymptotic behavior of $M^{\bm{X}}$ aligns with the commonly shared background risk sources (more precisely, the convolution of background random variables) under the thinning-dependence structure. By contrast, under the common shock and comonotonic shock structures, the CDFs of $M^{\bm{X}}$ are asymptotically determined by the idiosyncratic random vector $\bm{Y}$ ($Y_{0}$ for common shock structure), which is independent of common shocks. Furthermore, despite the tight connection, the asymptotic analysis on $d$ for comonotonic structure is mathematically much more challenging than common shock structure because the dimension of the shock vector $\bm{Z}$ increases along $d$, indicating the number of comonotonic shock variables $Z_{i}$ also approach infinity. As such, the comonotonic structure differs significantly from the common shock structure and requires additional assumptions on the comonotonic shock vector $\bm{Z}$ to obtain similar asymptotic results. 
	Such finding reveals the intrinsic difference between the common shock and comonotonic shock structures, and the difference is more obvious as $d$ is high. Moreover, we also consider the asymptotic scenarios when certain key parameters dominate the dependence structures. For instance, it is intuitive to expect that when the rate of the common shock variable $Y_{0}$ dominates the other Poisson rates, the distribution of $M^{\bm{X}}$ and $m^{\bm{X}}$ should considerably depend on the distribution of $Y_{0}$. Our results validate this analytically and quantify the convergency rate between the CDF of $Y_{0}$ and those of $M^{\bm{X}}$ and $m^{\bm{X}}$, which is in line with intuitive expectations. Overall, our asymptotic analyses, to some extent, provide an alternative new way to study similar dependence structures for other multivariate distributions from an asymptotic standpoint.
	
	
	Thirdly, we present numerical examples to demonstrate the influence of different dependent structures on the $M^{\bm{X}}$ and $m^{\bm{X}}$ of multivariate Poisson distributions. To highlight this impact, it is essential to ensure consistent settings for a relatively \textquotedblleft  fair\textquotedblright\ comparison of the three structures. 
	To this end, we first consider the case where all marginal Poisson rates are equal. Regarding the dependence structure, requiring all pairwise correlations to be identical is highly restrictive, especially when $d$ is large, as it would enforce equality across $(d\times d-1)/2$ correlation coefficients. Therefore, instead of pairwise correlation, we focus on the average correlation coefficient, which serves as a tractable and independent measure of the overall dependence in a multivariate random vector.
	Based on the above settings, we compare their CDFs in numerical examples under two scenarios: one where the common shock and thinning-dependence structures differ significantly and another where they are very similar. Our results show that when the structures are similar, the CDF values are close. However, when the structural differences are significant, the difference in CDF is pronounced.  These findings strongly demonstrate that the construction of dependent structures plays a crucial role in determining the maximum and minimum values. Compared with \cite{karlis2003algorithm,schulz2021multivariate} and \cite{wang2005correlated}, our numerical results further analyze these three dependent structures from the perspective of their maximum (minimum) values, offering an additional viewpoint to discuss the similarities and differences between them.
	
	The rest of paper is organized as follows. \Cref{sec2} outlines the definition of the multivariate Poisson-distributed random variable under common shock, comonotonic shock, and thinning-dependence structure. The CDFs of $M^{\bm{X}} $ and $m^{\bm{X}}$ are obtained in \Cref{sec3}. In \Cref{sec4}, we show the results of our asymptotic analysis under the three structures. \Cref{sec5} presents the numerical illustration and \Cref{sec6} concludes the paper with the discussions.
	
	\section{Multivariate Poisson distribution with dependence} \label{sec2}
	
	In this section, we utilize three distinct dependence structures: common shock, comonotonic shock, and thinning-dependence structures, to construct multivariate Poisson distributions. Specifically, we provide the mathematical definitions of these three multivariate Poisson distributions. Throughout this paper, $\mathcal{P}(\theta )$ denotes a univariate Poisson distribution with rate $\theta $, $G_{\theta }(\cdot )$ represents its CDF, and $\bar{G}_{\theta }(\cdot )$ is the corresponding survival function.
	
	\subsection{Common shock structure}
	
	In the literature, several methods for constructing the common shock structure have been proposed, such as those by \cite{mahamunulu1967note} and  \cite{kawamura1976structure}, which consider multiple random variables as the common shock components. Although these approaches are intuitive and flexible, deriving the explicit expression of the distribution becomes complex, as noted by \cite{karlis2007finite} and \cite{inouye2017review}. This complexity poses challenges for studying the distribution of the maximum or minimum. Therefore, in this paper, following the approach of \cite{karlis2003algorithm}, we consider only the single common shock variable case. The definition of the multivariate common shock Poisson distribution is given as follows:
	
	\begin{definition}
		\label{def-common-shock} A random vector $\left( X_{1},\dots ,X_{d}\right) $
		is said to have a multivariate common shock Poisson distribution with 
		parameters $\left( \theta _{0},\theta _{1},\cdots ,\theta _{d}\right) \in 
		(0,\infty )^{d+1}$ if it can be expressed as follow:  
		\begin{equation}
			X_{i}=Y_{0}+Y_{i},\quad i=1,2,\dots ,d,  \label{common shock def}
		\end{equation}
		where $Y_{i}\sim \mathcal{P}(\theta _{i})$, $i=0,1,\dots ,d$ are mutually 
		independent Poisson random variables. 
	\end{definition}
	
	The multivariate Poisson model with the common shock structure is applied in
	various fields, including market research and epidemiology  \citep[see][]%
	{karlis2003algorithm}. However, this structure also imposes strong 
	limitations. Specifically, it imposes constraints on the correlation between
	any two variables in a multivariate common shock Poisson vector. For 
	arbitrary pair of Poisson random variables with rates $\theta _{i}$ and $
	\theta _{j}$, \cite{griffiths1979aspects} demonstrated that
	their correlation falls in the interval $[\rho _{\min } ,\rho _{\max }] $ where  
	\begin{align*}
		\rho _{\min }& =\frac{1}{\sqrt{\theta _{i}\theta _{j}}}\left[ -\theta
		_{i}\theta _{j}-\sum_{m\in \mathbb{N}}^{{}}\sum_{n\in \mathbb{N}}^{{}}\min
		\left\{ 0,G_{\theta _{i}}\left( m\right) +G_{\theta _{j}}\left( n\right)
		-1\right\} \right] , \\
		\rho _{\max }& =\frac{1}{\sqrt{\theta _{i}\theta _{j}}}\left[ -\theta
		_{i}\theta _{j}+\sum_{m\in \mathbb{N}}^{{}}\sum_{n\in \mathbb{N}}^{{}}\min
		\left\{ \overline{G}_{\theta _{i}}\left( m\right) ,\overline{G}_{\theta
			_{j}}\left( n\right) \right\} \right] .
	\end{align*}
	Nevertheless, within the common shock structure, the correlation coefficient
	will not reach $\rho _{\max }$ unless the marginal Poisson rates are 
	identical.
	
	\subsection{Comonotonic shock structure}
	
	To establish a more flexible dependence structure, \cite{schulz2021multivariate} substituted a comonotonic vector $\bm{Z}$ for the single common shock variable $Y_0$, allowing each $X_i$, $i=1,\dots,d$, to have its own comonotonic shock variable $Z_i$. It is important to highlight that since $\bm{Z}$ is comonotonic, all comonotonic shock variables $Z_i$ are controlled by the same uniform random variable $U$, essentially reflecting the influence of the common factor. This structure enhances the flexibility of dependent structures while maintaining intuitiveness and interpretability. Its mathematical definition is as follows:
	
	\begin{definition}
		\label{def-comonotonic shock} A random vector $\left( X_{1},\dots 
		,X_{d}\right) $ is said to follow the multivariate comonotonic shock Poisson
		distribution with parameters $\Lambda _{d}=\left( \lambda _{1},\dots 
		,\lambda _{d}\right) \in \left( 0,\infty \right) ^{d}$ and $\theta $$\in $$ %
		\left[ 0,1\right] $, if it can be expressed in  
		\begin{equation*}
			\left( X_{1},\dots ,X_{d}\right) =\left( Y_{1},\dots ,Y_{d}\right) +\left(
			Z_{1},\dots ,Z_{d}\right) ,
		\end{equation*}
		in terms of two independent random vectors $\left( Y_{1},\dots ,Y_{d}\right)
		$ and $\left( Z_{1},\dots ,Z_{d}\right) $ such that\newline
		(i) $Y_{1},\dots ,Y_{d}$ are mutually independent and, for all $j\in \left\{
		1,\dots ,d\right\} ,Y_{j}\sim \mathcal{P}\left( \left( 1-\theta \right) 
		\lambda _{j}\right) $;\newline
		(ii) $Z_{1},\dots ,Z_{d}$ are comonotonic and, for all $j\in \left\{ 1,\dots
		,d\right\} ,Z_{j}\sim \mathcal{P}\left( \theta \lambda _{j}\right) $. 
	\end{definition}
	
	Notably, the marginals are mutually independent when $\theta =0$, and $\left( X_{1},\dots ,X_{d}\right) $ is comonotonic when $\theta =1$. Moreover, when $\lambda _{1}=\lambda _{2}=\cdots =\lambda _{d}$, the multivariate comonotonic shock Poisson distribution reduces to the multivariate common shock Poisson distribution. In contrast to the common shock structure, the introduction of the comonotonic shock vector enhances the flexibility of this dependent structure. For instance, the correlation coefficient of any pair $(X_{i},X_{j})$, $i\neq j$, can attain $\rho _{\max} $ without requiring the marginal Poisson rates to be equal.
	
	\subsection{Thinning-dependence structure}
	
	Since \cite{wang2005correlated} introduced correlated Poisson processes with a thinning-dependence structure, most studies have applied this framework exclusively to describe the correlation of stochastic processes. In this paper, we extend the thinning-dependence structure to random vectors and propose a multivariate thinning-dependence Poisson distribution, defined as follows.
	
	\begin{definition}
		\label{def-thinning-dependence} Assuming  
		\begin{equation}
			\bm{X}=(X_{1},\dots ,X_{d}),\quad X_{j}=\sum_{k=1}^{l}X_{j}^{k},\quad
			j=1,\dots ,d,  \label{thinning def eq}
		\end{equation}
		and $Y_{1},Y_{2},\dots ,Y_{l}$ are independent Poisson random variables with
		parameters $\theta _{1},\dots ,\theta _{l}$, respectively. Then, $\bm{X}$ is said to follow the multivariate 
		thinning-dependence Poisson distribution, if, \newline
		(i) the random variables $X_{j}^{k}|Y_{k}=y_{k},j=1,\dots ,d,k=1,\dots ,l$ 
		follow the binomial distribution with parameters $y_{k}$ and $p_{j}^{k}$, 
		that is,  
		\begin{equation}
			\mathbb{P}\left( X_{j}^{k}=z|Y_{k}=y_{k}\right) =\binom{y_{k}}{z}
			(p_{j}^{k})^{z}\left( 1-p_{j}^{k}\right) ^{y_{k}-z},  \label{def-thinning eq}
		\end{equation}
		where $z\leq y_{k}$; \newline
		(ii) $X_{1}^{k},\dots ,X_{d}^{k}$ are conditionally independent given $
		Y_{k}=y_{k}$, $k=1,\dots ,l$; \newline
		(iii) the $l$ random vectors $(Y_{1},X_{1}^{1},\dots ,X_{d}^{1}),\dots 
		,(Y_{l},X_{1}^{l},\dots ,X_{d}^{l})$ are independent. 
	\end{definition}
	
	\begin{remark}
		\label{remark 2.1} For $j=1,\dots,d$, $k=1,\dots,l$, the random variables $
		X_j$ and $X_j^k$ are all Poisson-distributed. Specifically, we have $
		X_j^k\sim\mathcal{P}(p_j^k\theta_k)$ and $X_j\sim\mathcal{P}
		(\sum_{k=1}^{l}p_j^k \theta_k)$, for $j=1,\dots,d$, $k=1,\dots,l$. This 
		follows from Definition \ref{def-thinning-dependence}, where  
		\begin{equation*}
			\begin{split}
				\mathbb{P}\left(X_j^k=z\right) = \sum_{y_k=z}^{\infty}\mathbb{P}
				\left(X_j^k=z | Y_k=y_k\right) \mathbb{P}(Y_k=y_k)
				=e^{-(p_{j}^{k}\theta_{k})} \frac{(p_j^k\theta_k)^z}{z!},
			\end{split}%
		\end{equation*}
		implying $X_j^k\sim\mathcal{P}(p_j^k\theta_k)$. Moreover, due to the 
		independence of $X_j^1,\dots, X_j^l$ mentioned in (iii) of Definition \ref%
		{def-thinning-dependence}, we have $X_j\sim\mathcal{P}(\sum_{k=1}^{l}p_j^k 
		\theta_k)$. 
	\end{remark}
	
	\begin{remark}
		\label{remark2.2-1}We provide an explanation of credit risk to Definition \ref{def-thinning-dependence} to further illustrate the idea behind the dependence structure. Consider $d$ firms with a solo systematic background risk (e.g., the default risk of major clients who have business with all firms). Let $Y_{1}$ and $X_{j}^{1},j=1,\dots ,d,$ denote the default times of a common client and the firms, respectively. For each firm, the systematic default of $Y_{1}$ may trigger the $j$-th firm's default with probability $p_{j}^{1}$. Hence, it is straightforward to see $X_{j}^{1}|Y_{1}=y_{1}$ indicates $y_{1}$ trials of systematic default event, which follows a binomial distribution, and these trials are mutually independent when the solo background risk is known.
	\end{remark}
	
	\begin{remark}
		\label{remark2.2} The thinning-dependence structure may reduce to common
		shock structure. Specifically, consider the case where $l=d+1$, and 
		\begin{equation*}
			\begin{bmatrix}
				p_{1}^{1} & p_{1}^{2} & \cdots  & p_{1}^{d} & p_{1}^{d+1} \\ 
				p_{2}^{1} & p_{2}^{2} & \cdots  & p_{2}^{d} & p_{2}^{d+1} \\ 
				\vdots  & \vdots  & \ddots  & \vdots  & \vdots  \\ 
				p_{d}^{1} & p_{d}^{2} & \cdots  & p_{d}^{d} & p_{d}^{d+1}%
			\end{bmatrix}%
			=%
			\begin{bmatrix}
				1 & 0 & \cdots  & 0 & 1 \\ 
				0 & 1 & \cdots  & 0 & 1 \\ 
				\vdots  & \vdots  & \ddots  & \vdots  & \vdots  \\ 
				0 & 0 & \cdots  & 1 & 1%
			\end{bmatrix}%
			,
		\end{equation*}%
		where $p_{j}^{k}$, $j=1,\dots ,d$, $k=1,\dots ,d+1$ are the probabilities given in Definition \ref{def-thinning-dependence}. In this scenario, background random variable $Y^{d+1}\sim \mathcal{P}(\theta_{d+1})$ affects $X_{j}^{d+1}$, $j=1,\dots ,d$ with probability 1, implying that the $Y^{d+1}$ reduces to the common shock variable. Meanwhile, for $k=1,\dots ,d$, the background random variable $Y_{k}\sim \mathcal{P}(\theta _{k})$ affects only $X_{k}^{k}$ with probability 1 and does not affect $X_{j}^{k}$ , for $j\neq k$, implying $X_{k}^{k}=Y^{k}$, $X_{j}^{k}=0$. Thus, according on Definition \ref{def-thinning-dependence}, for $j=1,\dots ,d$, we have $X_{j}=\sum_{k=1}^{d+1}X_{j}^{k}=Y_{j}+Y_{d+1}$, indicating that the multivariate thinning-dependence Poisson distribution reduces to the multivariate common shock Poisson distribution under this parameter setting. 
	\end{remark}
	
	\section{CDFs of maximum and minimum of multivariate Poisson distribution}\label{sec3}
	
	In this section, we explicitly derive CDFs for the maximum and minimum of the multivariate Poisson distribution under the common shock, comonotonic shock, and thinning-dependence structures.
	
	Before presenting the aforementioned results, we first introduce the CDF 
	\footnotemark, and survival function of the univariate Poisson random 
	variable in the following lemma. \footnotetext[2]{
		Notably, the distributions studied in this paper are discrete, all defined 
		on non-negative integers. Let $\hat{F}(x)$ denote the CDF when $x \in\mathbb{%
			\ N}$. When $x>0$ and $\notin\mathbb{N}$, $F(x)=\hat{F}(\lfloor x\rfloor)$, 
		where $\lfloor \cdot \rfloor$ represents rounding down. Therefore, for 
		convenience, we only need to focus on the case where the variable $x$ 
		assumes non-negative integer values in Sections 3 and 4 unless otherwise 
		specified.}
	
	\begin{lemma}
		\label{poisson lemma} Let a random variable $X\sim\mathcal{P}(\lambda)$, 
		then the CDF and survival function of $X$ are
		\begin{equation*}
			G_\lambda(x) = \frac{\Gamma(x+1,\lambda)}{\Gamma(x+1)},\quad
			\bar{G}_\lambda(x) = \frac{\gamma(x+1,\lambda)}{\Gamma(x+1)},
		\end{equation*}
		respectively, where  
		\begin{equation*}
			\Gamma(z)= \int_{0}^{\infty}t^{z-1}e^{-t}dt,\quad \Gamma(z,x)=
			\int_{x}^{\infty}t^{z-1}e^{-t}dt,\quad \gamma(z,x)=
			\int_{0}^{x}t^{z-1}e^{-t}dt,
		\end{equation*}
		are the gamma function, upper incomplete gamma function, and lower 
		incomplete gamma function, respectively. 
	\end{lemma}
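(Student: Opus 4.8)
The plan is to establish the two identities $G_\lambda(x) = \Gamma(x+1,\lambda)/\Gamma(x+1)$ and $\bar G_\lambda(x) = \gamma(x+1,\lambda)/\Gamma(x+1)$ for a non-negative integer $x$, using the well-known connection between the Poisson CDF and the incomplete gamma function. First I would write out the Poisson CDF explicitly as the finite sum $G_\lambda(x) = \mathbb{P}(X \le x) = \sum_{k=0}^{x} e^{-\lambda}\lambda^k/k!$, and observe that since $x \in \mathbb{N}$ we have $\Gamma(x+1) = x!$, so it suffices to prove $\Gamma(x+1,\lambda) = x!\,\sum_{k=0}^{x} e^{-\lambda}\lambda^k/k!$, i.e. $\int_\lambda^\infty t^x e^{-t}\,dt = x!\sum_{k=0}^{x} e^{-\lambda}\lambda^k/k!$.

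The key step is an induction on $x$ (equivalently, repeated integration by parts). For the base case $x=0$, $\int_\lambda^\infty e^{-t}\,dt = e^{-\lambda} = 0!\cdot e^{-\lambda}\lambda^0/0!$, which checks out. For the inductive step, integrating $\int_\lambda^\infty t^{x+1} e^{-t}\,dt$ by parts with $u = t^{x+1}$, $dv = e^{-t}\,dt$ gives $\lambda^{x+1} e^{-\lambda} + (x+1)\int_\lambda^\infty t^x e^{-t}\,dt$; applying the inductive hypothesis to the remaining integral and simplifying $(x+1)\cdot x! = (x+1)!$ together with the term $\lambda^{x+1}e^{-\lambda} = (x+1)!\,e^{-\lambda}\lambda^{x+1}/(x+1)!$ yields the claimed formula for $x+1$. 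This establishes the CDF identity. The survival function identity then follows immediately from $\bar G_\lambda(x) = 1 - G_\lambda(x)$ together with the complementarity relation $\Gamma(z,x) + \gamma(z,x) = \Gamma(z)$, which is itself just the additivity of the integral over $[0,x]$ and $[x,\infty)$; dividing through by $\Gamma(x+1)$ gives $\bar G_\lambda(x) = 1 - \Gamma(x+1,\lambda)/\Gamma(x+1) = \gamma(x+1,\lambda)/\Gamma(x+1)$.

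I do not anticipate a genuine obstacle here, as this is a classical identity; the only minor care needed is to keep the bookkeeping between $\Gamma(x+1)$ and $x!$ straight (legitimate precisely because $x$ is a non-negative integer, consistent with the footnote convention that the CDF is evaluated at $\lfloor x\rfloor$) and to note convergence of the improper integrals, which is immediate since $t^x e^{-t}$ decays exponentially. Alternatively one could cite the standard reference for the regularized incomplete gamma function, but the short induction above is self-contained and is the route I would write out.
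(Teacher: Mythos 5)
Your proof is correct and complete. The paper itself offers no proof of this lemma — it is stated as a classical fact about the Poisson distribution and the regularized incomplete gamma function — so there is no "paper approach" to compare against. Your induction-by-parts argument is the standard self-contained derivation and would serve perfectly well as a proof here; the base case, the integration by parts giving $\int_\lambda^\infty t^{x+1}e^{-t}\,dt = \lambda^{x+1}e^{-\lambda} + (x+1)\int_\lambda^\infty t^{x}e^{-t}\,dt$, and the reduction of the survival-function identity to the additivity $\Gamma(z,x)+\gamma(z,x)=\Gamma(z)$ are all sound, and you correctly flag the one bookkeeping point (that $\Gamma(x+1)=x!$ because $x$ is a non-negative integer, consistent with the paper's footnote convention).
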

	
	\subsection{Common shock structure}
	
	For the multivariate common shock Poisson distribution defined in Definition \ref{def-common-shock}, we can obtain the CDFs of the maximum and minimum by the conditional technique in the following theorem.
	\begin{theorem}
		Suppose $\bm{X}=(X_{1},\dots ,X_{d})$ follows the multivariate common shock 
		Poisson distribution defined in Definition \ref{def-common-shock}, then the 
		CDFs of $M^{\bm{X}}=\max \left( X_{1},\dots ,X_{d}\right) $ and $m^{\bm{X}
		}=\min \left( X_{1},\dots ,X_{d}\right) $ are  
		\begin{equation*}
			F_{M^{\bm{X}}}\left( x\right) =\sum_{y=0}^{x}\frac{\theta _{0}^{y}e^{-\theta
					_{0}}\prod_{j=1}^{d}\Gamma \left( x-y+1,\theta _{j}\right) }{\Gamma \left(
				y+1\right) \Gamma ^{d}\left( x-y+1,\right) },
			\quad
			F_{m^{\bm{X}}}\left( x\right) =\sum_{y=0}^{x}\frac{\theta _{0}^{y}e^{-\theta
					_{0}}\left[ \Gamma ^{d}\left( x-y+1\right) -\prod_{j=1}^{d}\gamma \left(
				x-y+1,\theta _{j}\right) \right] }{\Gamma \left( y+1\right) \Gamma
				^{d}\left( x-y+1\right) },
		\end{equation*}
		where $x\in \mathbb{N}$. 
	\end{theorem}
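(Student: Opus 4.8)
The plan is to exploit the representation $X_i = Y_0 + Y_i$ from Definition \ref{def-common-shock} and condition on the common shock $Y_0$. First I would observe that, conditionally on $Y_0 = y$, the variables $X_1,\dots,X_d$ become $Y_1+y,\dots,Y_d+y$, which are mutually independent (shifted) Poisson random variables. Hence the conditional event $\{M^{\bm{X}} \le x\}$ factorizes: $\mathbb{P}(M^{\bm{X}} \le x \mid Y_0 = y) = \prod_{j=1}^d \mathbb{P}(Y_j + y \le x) = \prod_{j=1}^d G_{\theta_j}(x-y)$, where $G_{\theta_j}$ is the univariate Poisson CDF from \Cref{poisson lemma}. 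Since $Y_j \ge 0$, this product is zero unless $y \le x$, so summing against the law of $Y_0 \sim \mathcal{P}(\theta_0)$ gives
\begin{equation*}
F_{M^{\bm{X}}}(x) = \sum_{y=0}^{x} \frac{\theta_0^y e^{-\theta_0}}{\Gamma(y+1)} \prod_{j=1}^d G_{\theta_j}(x-y).
\end{equation*}
Substituting $G_{\theta_j}(x-y) = \Gamma(x-y+1,\theta_j)/\Gamma(x-y+1)$ from \Cref{poisson lemma} and collecting the $d$ identical denominators into $\Gamma^d(x-y+1)$ yields the claimed formula for $F_{M^{\bm{X}}}$.

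For the minimum I would run the parallel argument using survival functions. Conditionally on $Y_0 = y$, the event $\{m^{\bm{X}} > x\}$ is $\bigcap_{j=1}^d \{Y_j + y > x\}$, so $\mathbb{P}(m^{\bm{X}} > x \mid Y_0 = y) = \prod_{j=1}^d \bar{G}_{\theta_j}(x-y)$ (interpreted as $1$ when $x-y<0$, since then $Y_j+y>x$ automatically). Therefore $\mathbb{P}(m^{\bm{X}} \le x \mid Y_0 = y) = 1 - \prod_{j=1}^d \bar{G}_{\theta_j}(x-y)$ for $y \le x$, and equals $0$ for $y > x$. Unconditioning,
\begin{equation*}
F_{m^{\bm{X}}}(x) = \sum_{y=0}^{x} \frac{\theta_0^y e^{-\theta_0}}{\Gamma(y+1)} \left[ 1 - \prod_{j=1}^d \bar{G}_{\theta_j}(x-y) \right].
\end{equation*}
Now I would use $\bar{G}_{\theta_j}(x-y) = \gamma(x-y+1,\theta_j)/\Gamma(x-y+1)$ and write the bracket over the common denominator $\Gamma^d(x-y+1)$, turning the $1$ into $\Gamma^d(x-y+1)$ in the numerator; this reproduces the stated expression for $F_{m^{\bm{X}}}$.

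The only genuinely delicate point is the boundary bookkeeping when $y > x$: one must check that the summand indeed vanishes there so that the sum can be truncated at $y = x$. For the maximum this is immediate because $G_{\theta_j}$ evaluated at a negative argument is $0$. For the minimum it requires noting that when $y > x$ each factor $\bar{G}_{\theta_j}(x-y) = 1$, so the bracket $1 - \prod_j \bar{G}_{\theta_j}(x-y)$ is zero — equivalently, the numerator $\Gamma^d(x-y+1) - \prod_j \gamma(x-y+1,\theta_j)$ in the stated formula vanishes for such $y$ (both terms equal, in the limiting/convention sense, the same quantity). I expect this edge-case verification, together with the convention for the incomplete gamma functions at nonpositive integer arguments flagged in the paper's footnote, to be the main thing warranting care; the rest is a routine conditioning computation combined with the substitutions from \Cref{poisson lemma}.
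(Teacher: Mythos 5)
Your proof is correct and follows essentially the same route as the paper's: condition on the common shock $Y_0$, exploit the conditional independence of $Y_1,\dots,Y_d$ to factorize the conditional CDF of the maximum (resp. the conditional survival function of the minimum) into a product, and then substitute the incomplete-gamma representations from \Cref{poisson lemma}. You spell out the truncation of the sum at $y = x$ more carefully than the paper does, but this is only added bookkeeping, not a different idea.
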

	
	\begin{proof}
		For the CDF of $M^{\bm{X}}$, conditioning on $Y_0=y_0$, we have,  
		\begin{align*}
			F_{M^{\bm{X}}}\left(x\right) 
			&=\sum_{y=0}^{x}\left[\mathbb{P}\left(Y_0=y\right)\prod_{j=1}^{d}\mathbb{P}
			\left(Y_j \le x-y\right)\right]=\sum_{y=0}^{x}\frac{\theta_{0}^{y}e^{-\theta_0}\prod_{j=1}^{d}\Gamma
				\left(x-y+1,\theta_j\right)}{\Gamma\left(y+1\right)\Gamma^d\left(x-y+1%
				\right) },
		\end{align*}
		where the last equality is from Lemma \ref{poisson lemma}. Similarly, for 
		the CDF of $m^{\bm{X}}$, we have,  
		\begin{align*}
			F_{m^{\bm{X}}}\left(x\right) 
			=\sum_{y=0}^{x}\frac{\theta_{0}^{y}e^{-\theta_0}\left[\Gamma^d\left(x-y+1
				\right)-\prod_{j=1}^{d}\gamma\left(x-y+1,\theta_j\right)\right]}{
				\Gamma\left(y+1\right)\Gamma^d\left(x-y+1\right)}.
		\end{align*}
	\end{proof}
	
	\subsection{Comonotonic shock structure}
	
	Similar to the common shock structure, we can utilize the conditional technique to explicitly derive the CDFs of $M^{\bm{X}}$ and $m^{\bm{X}}$ again. However, replacing $\bm{X}$'s single common shock variable with a $d$-dimensional comonotonic shock vector adds additional complexity to the derivation of the distributions of $M^{\bm{X}}$ and $m^{\bm{X}}$, as detailed in the following theorem.
	
	\begin{theorem}
		\label{comonotonic th} Suppose $\bm{X} = \left(X_1,\dots,X_d\right)$ follows
		the multivariate comonotonic shock Poisson distribution defined in 
		Definition \ref{def-comonotonic shock}, then the CDFs of $M^{\bm{X}
		}=\max\left(X_1,\dots,X_d\right)$ and $m^{\bm{X}}=\min\left(X_1,\dots,X_d
		\right)$ are
		\begin{align}  \label{comonotonic M eq1}
			F_{M^{\bm{X}}}\left(x\right)=\sum_{z_1=0}^{x}\cdots
			\sum_{z_d=0}^{x}\prod_{j=1}^{d}g_{\left(1-\theta\right)\lambda_j}\left(x-z_j
			\right)\min\left\{G_{\theta \lambda_1}\left(z_1\right),\dots,G_{\theta
				\lambda_d}\left(z_d\right)\right\},
		\end{align}
		or equivalently,  
		\begin{align}  \label{comonotonic M eq2}
			F_{M^{\bm{X}}}\left(x\right)=\sum_{z_1=0}^{x}\cdots
			\sum_{z_d=0}^{x}\prod_{j=1}^{d}G_{\left(1-\theta\right)\lambda_j}\left(x-z_j
			\right)C_{\Lambda,\theta}\left(z_1,\dots,z_d\right),
		\end{align}
		and  
		\begin{align}  \label{comonotonic m eq1}
			F_{m^{\bm{X}}}\left(x\right)=\sum_{z_1=0}^{\infty}\cdots
			\sum_{z_d=0}^{\infty}\prod_{j=1}^{d}g_{\left(1-\theta\right)\lambda_j}
			\left(z_j\right)\max\left\{G_{\theta
				\lambda_1}\left(x-z_1\right),\dots,G_{\theta
				\lambda_d}\left(x-z_d\right)\right\},
		\end{align}
		or equivalently,  
		\begin{align}  \label{comonotonic m eq2}
			F_{m^{\bm{X}}}\left(x\right)=1-\sum_{z_1=0}^{\infty}\cdots
			\sum_{z_d=0}^{\infty}\prod_{j=1}^{d}\overline{G}_{\left(1-\theta\right)
				\lambda_j}\left(x-z_j\right)C_{\Lambda,\theta}\left(z_1,\dots,z_d\right),
		\end{align}
		where $x \in \mathbb{N}$, $g_{\lambda}\left(x\right)$ denotes the PMF of the
		Poisson distribution with parameter $\lambda$,  
		\begin{equation*}
			C_{\Lambda,\theta}\left(z_1,\dots,z_d\right)=\left[\min\left\{G_{\theta
				\lambda_1}\left(z_1\right),\dots,G_{\theta
				\lambda_d}\left(z_d\right)\right\}-\max\left\{G_{\theta
				\lambda_1}\left(z_1-1\right),\dots,G_{\theta
				\lambda_d}\left(z_d-1\right)\right\}\right]_{+},
		\end{equation*}
		and $[\cdot]_+ = \max\{0,\cdot\}$. 
	\end{theorem}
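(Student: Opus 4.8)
The plan is to exploit the independence of the idiosyncratic vector $\bm Y=(Y_1,\dots,Y_d)$ and the comonotonic shock vector $\bm Z=(Z_1,\dots,Z_d)$ via a double conditioning argument: conditioning on $\bm Z$ produces the representations involving $C_{\Lambda,\theta}$, while conditioning on $\bm Y$ produces the representations involving the $\min$/$\max$ of the $G_{\theta\lambda_j}$'s. The only structural input needed beyond Lemma \ref{poisson lemma} is the comonotonic representation from Definition \ref{def-comonotonic shock}: there is a single $U\sim\mathcal U(0,1)$, independent of $\bm Y$, with $Z_j=G_{\theta\lambda_j}^{-1}(U)$ for every $j$, together with the elementary quantile identity $G^{-1}(u)\le t\iff u\le G(t)$, valid for $t\in\mathbb N$, which yields $\{G^{-1}(U)=z\}=\{U\in(G(z-1),G(z)]\}$.

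For the maximum I would start from $F_{M^{\bm X}}(x)=\mathbb P(Y_j+Z_j\le x,\ j=1,\dots,d)$. Conditioning on $\bm Z=(z_1,\dots,z_d)$, the independence of $\bm Y$ from $\bm Z$ and the mutual independence of $Y_1,\dots,Y_d$ give the conditional probability $\prod_{j=1}^d G_{(1-\theta)\lambda_j}(x-z_j)$; the joint law of $\bm Z$ follows from the quantile identity, since $\{Z_j=z_j\ \forall j\}=\bigl\{U\in(\max_j G_{\theta\lambda_j}(z_j-1),\,\min_j G_{\theta\lambda_j}(z_j)]\bigr\}$ has probability exactly $C_{\Lambda,\theta}(z_1,\dots,z_d)$. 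Summing over $\bm z$ and noting that $G_{(1-\theta)\lambda_j}(x-z_j)=0$ once $z_j>x$ (which truncates the ranges to $0,\dots,x$) gives \eqref{comonotonic M eq2}. For \eqref{comonotonic M eq1} I would condition instead on $\bm Y=(y_1,\dots,y_d)$: then $\mathbb P(Z_j\le x-y_j\ \forall j\mid\bm Y)=\mathbb P\bigl(U\le\min_j G_{\theta\lambda_j}(x-y_j)\bigr)=\min_j G_{\theta\lambda_j}(x-y_j)$, and summing this against $\prod_j g_{(1-\theta)\lambda_j}(y_j)$, truncating to $y_j\le x$, and substituting $z_j=x-y_j$ reproduces \eqref{comonotonic M eq1} (the two representations could also be matched directly by a coordinatewise Abel summation, but the probabilistic route is cleaner).

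For the minimum I would use $F_{m^{\bm X}}(x)=1-\mathbb P(X_j\ge x+1,\ \forall j)$. Conditioning on $\bm Z$ gives $\mathbb P(Y_j>x-z_j\ \forall j\mid\bm Z)=\prod_j\overline G_{(1-\theta)\lambda_j}(x-z_j)$, and multiplying by $C_{\Lambda,\theta}(z_1,\dots,z_d)$ and summing over all $\bm z\in\mathbb N^d$ (no truncation now, since $\overline G_{(1-\theta)\lambda_j}(x-z_j)\to1$) yields \eqref{comonotonic m eq2}. Conditioning on $\bm Y$ instead, $F_{m^{\bm X}}(x)=\mathbb P(\exists j:\,Z_j\le x-y_j)=\mathbb P\bigl(U\le\max_j G_{\theta\lambda_j}(x-y_j)\bigr)=\max_j G_{\theta\lambda_j}(x-y_j)$, and summing against $\prod_j g_{(1-\theta)\lambda_j}(y_j)$ and relabelling the indices gives \eqref{comonotonic m eq1}. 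The main obstacle is the comonotonic bookkeeping in the step identifying the joint law of $\bm Z$: one must track the half-open intervals coming from the $\inf$-based definition of $G^{-1}$, compute their intersection across coordinates, and verify that the truncation $[\,\cdot\,]_+$ in $C_{\Lambda,\theta}$ exactly accounts for the degenerate points $(z_1,\dots,z_d)$ that no single realisation of $U$ can produce simultaneously; the rest is routine conditioning with the notation of Lemma \ref{poisson lemma}.
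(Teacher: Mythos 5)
Your proposal is correct and follows essentially the same route as the paper: condition on $\bm Z$ to produce the $C_{\Lambda,\theta}$-forms and on $\bm Y$ to produce the $\min$/$\max$ forms, with the joint law of $\bm Z$ read off from the single underlying $U\sim\mathcal U(0,1)$ via the quantile identity $G^{-1}(u)\le t\iff u\le G(t)$, and hence $\mathbb P(\bm Z=\bm z)=\bigl[\min_j G_{\theta\lambda_j}(z_j)-\max_j G_{\theta\lambda_j}(z_j-1)\bigr]_+ = C_{\Lambda,\theta}(\bm z)$. The only cosmetic difference is in \eqref{comonotonic m eq1}: you compute $\mathbb P(\exists j:\,Z_j\le x-y_j\mid\bm Y)=\max_j G_{\theta\lambda_j}(x-y_j)$ directly, whereas the paper computes the complementary event $\mathbb P(\forall j:\,Z_j\ge x-z_j+1)=1-\max_j G_{\theta\lambda_j}(x-z_j)$ and then recovers the $\max$ by subtracting from the normalizing identity $\sum_{\bm z}\prod_j g_{(1-\theta)\lambda_j}(z_j)=1$; these are the same argument up to De~Morgan, so no substantive difference.
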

	
	\begin{proof}
		Firstly, utilizing the conditional technique on $\bm{Z}$ or $\bm{Y}$, the CDF of $M^{\bm{X}}$ can be written as the following two forms:
		\begin{equation}  \label{cdf comonotonic3.2eq1}
			\begin{split}
				F_{M^{\bm{X}}}\left(x\right)
				&=\sum_{z_1=0}^{x}\cdots\sum_{z_d=0}^{x}\mathbb{P}\left(Z_1=z_1,
				\dots,Z_d=z_d\right)\mathbb{P}\left(Y_1 \le x-z_1,\dots,Y_d \le x-z_d\right)
				\\
				&=\sum_{z_1=0}^{x}\cdots\sum_{z_d=0}^{x}\mathbb{P}\left(Y_1=x-z_1,
				\dots,Y_d=x-z_d\right)\mathbb{P}\left(Z_1 \le z_1,\dots,Z_d \le z_d\right).
			\end{split}%
		\end{equation}
		The key point of above formula is to calculate $\mathbb{P}
		\left(Z_1=z_1,\dots,Z_d=z_d\right)$ and $\mathbb{P}\left(Z_1 \le 
		z_1,\dots,Z_d \le z_d\right)$. To obtain (\ref{comonotonic M eq1}), we 
		calculate the first one,  
		\begin{align*}
			C_{\Lambda,\theta}\left(z_1,\dots,z_d\right)&=\mathbb{P}\left(Z_1=z_1,
			\dots,Z_d=z_d\right) \\
			&=\mathbb{P}\left(U \in \left(G_{\theta
				\lambda_1}\left(z_1-1\right),G_{\theta \lambda_1}\left(z_1\right)\right]
			,\dots,U \in \left(G_{\theta \lambda_d}\left(z_d-1\right),G_{\theta
				\lambda_d}\left(z_d\right)\right]\right) \\
			&=\mathbb{P}\left(\max\left\{G_{\theta
				\lambda_1}\left(z_1-1\right),\dots,G_{\theta
				\lambda_d}\left(z_d-1\right)\right\}<U \le \min\left\{G_{\theta
				\lambda_1}\left(z_1\right),\dots,G_{\theta
				\lambda_d}\left(z_d\right)\right\}\right) \\
			&=\left[\min\left\{G_{\theta \lambda_1}\left(z_1\right),\dots,G_{\theta
				\lambda_d}\left(z_d\right)\right\}-\max\left\{G_{\theta
				\lambda_1}\left(z_1-1\right),\dots,G_{\theta
				\lambda_d}\left(z_d-1\right)\right\}\right]_{+},
		\end{align*}
		where $U$ is a uniform random variable and $U\sim\mathcal{U}(0,1)$.
		
		To obtain (\ref{comonotonic M eq2}), we calculate the second one, 
		\begin{align*}
			\mathbb{P}\left(Z_1 \le z_1,\dots,Z_d \le z_d\right)=\min\left\{G_{\theta \lambda_1}\left(z_1\right),\dots,G_{\theta
				\lambda_d}\left(z_d\right)\right\}.
		\end{align*}
		
		Secondly, for the CDF of $m^{\bm{X}}$, we have,  
		\begin{align}
			F_{m^{\bm{X}}}\left(x\right)
			&=1-\sum_{z_1=0}^{\infty}\cdots\sum_{z_d=0}^{\infty}\mathbb{P}
			\left(Z_1=z_,\dots,Z_d=z_d\right)\mathbb{P}\left(Y_1 \ge x-z_1+1,\dots,Y_d
			\ge x-z_d+1\right) \notag\\
			&=1-\sum_{z_1=0}^{\infty}\cdots \sum_{z_d=0}^{\infty}\mathbb{P}
			\left(Y_1=z_1,\dots,Y_d=z_d\right)\mathbb{P}\left(Z_1 \ge x-z_1+1,\dots,z_d
			\ge x-z_d+1\right).\label{cdf comonotonic3.2eq2}
		\end{align}
		
		Meanwhile, we have
		\begin{align*}
			\mathbb{P}\left(Z_1 \ge z_1,\dots,Z_d \ge z_d\right)=1-\max\left\{G_{\theta \lambda_1}\left(z_1-1\right),\dots,G_{\theta
				\lambda_d}\left(z_d-1\right)\right\},
		\end{align*}
		and  
		\begin{equation*}
			1=\sum_{z_1=0}^{\infty}\cdots
			\sum_{z_d=0}^{\infty}\prod_{j=1}^{d}g_{\left(1-\theta\right)\lambda_j}
			\left(z_j\right)=\sum_{z_1=0}^{\infty}g_{\left(1-\theta\right)\lambda_1}
			\left(z_1\right)\cdots
			\sum_{z_d=0}^{\infty}g_{\left(1-\theta\right)\lambda_d}\left(z_d\right).
		\end{equation*}
		Therefore, we can derive two forms of expression for $F_{m^{\bm{X}
		}}\left(x\right)$: 
		For (\ref{comonotonic m eq1}),  
		\begin{align*}
			F_{m^{\bm{X}}}\left(x\right)&=1-\sum_{z_1=0}^{\infty}\cdots
			\sum_{z_d=0}^{\infty}\mathbb{P}\left(Y_1=z_1,\dots,Y_d=z_d\right)\mathbb{P}
			\left(Z_1 \ge x-z_1+1,\dots,z_d \ge x-z_d+1\right) &  \\
			&=\sum_{z_1=0}^{\infty}\cdots
			\sum_{z_d=0}^{\infty}\prod_{j=1}^{d}g_{\left(1-\theta\right)\lambda_j}
			\left(z_j\right)\max\left\{G_{\theta
				\lambda_1}\left(x-z_1\right),\dots,G_{\theta
				\lambda_d}\left(x-z_d\right)\right\}. & 
		\end{align*}
		For (\ref{comonotonic m eq2}),  
		\begin{align*}
			F_{m^{\bm{X}}}\left(x\right)&=1-\sum_{z_1=0}^{\infty}\cdots\sum_{z_d=0}^{
				\infty}\mathbb{P}\left(Z_1=z_,\dots,Z_d=z_d\right)\mathbb{P}\left(Y_1 \ge
			x-z_1+1,\dots,Y_d \ge x-z_d+1\right) \\
			&=1-\sum_{z_1=0}^{\infty}\cdots \sum_{z_d=0}^{\infty}\prod_{j=1}^{d} 
			\overline{G}_{\left(1-\theta\right)\lambda_j}\left(x-z_j\right)C_{\Lambda,
				\theta}\left(z_1,\dots,z_d\right).
		\end{align*}
	\end{proof}

	Notably, in (\ref{cdf comonotonic3.2eq1}) and (\ref{cdf comonotonic3.2eq2}), we equivalently express the events $\{Y_1+Z_1\le x_1,\dots,Y_d+Z_d\le x_d\}$ and $\{Y_1+Z_1\ge x_1+1,\dots,Y_d+Z_d\ge x_d+1\}$ in two different forms, respectively. Consequently, we analytically derive two distinct CDFs for $M^{ \bm{X}}$, specifically (\ref{comonotonic M eq1}) and (\ref{comonotonic M eq2} ). Similarly, for $m^{\bm{X}}$, we derive (\ref{comonotonic m eq1}) and (\ref{comonotonic m eq2}). Although these expressions are equivalent, one may be more convenient for specific asymptotic analyses. For example, we use (\ref{comonotonic M eq2}) in Proposition \ref{as an comonotonic prop2}, while  Propositions \ref{as an comonotonic prop1} and \ref{as an comonotonic prop3}  utilize (\ref{comonotonic M eq1}).
	
	\subsection{Thinning-dependence structure}
	
	As illustrated in Definition \ref{def-thinning-dependence}, for a multivariate thinning-dependence Poisson distributed random vector $\bm{X}$, the components of marginal random variable $X_j$ are controlled by background random variable $Y_k$ (i.e., $X_j^k \vert Y_k = y_k \sim\text{Binomial}(p_j^k, y_k)$). This mechanism links all the marginal random variables through $l$ background random variables $Y_1, \dots, Y_l$ indirectly. Therefore, it is necessary to first investigate the specific mechanism by which the background random variables influence the marginal random variables, i.e., the conditional marginal random variable $X_j \vert Y_1 = y_1, \cdots, Y_l = y_l$. These results are presented in the following lemmas. The proof of Lemma \ref{lemma-3.2} is straightforward and thus omitted here.
	
	\begin{lemma}
		\label{lemma-3.2} Let $\bm{X} = \left(X_1,\dots,X_d\right)$ follows the 
		multivariate thinning-dependence Poisson distribution, then $X_j,j=1\dots,d$
		are mutually independent given $Y_1=y_1, \dots, Y_l=y_l$. 
	\end{lemma}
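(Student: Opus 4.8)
The plan is to deduce the conditional independence of $X_1,\dots ,X_d$ from the conditional independence of the whole array $\{X_j^k:1\le j\le d,\ 1\le k\le l\}$ given $Y_1=y_1,\dots ,Y_l=y_l$, exploiting the fact that $X_j=\sum_{k=1}^{l}X_j^k$ depends only on the $j$-th ``row'' of that array, so that distinct $X_j$'s are functions of disjoint, mutually independent blocks.

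First I would show that, conditionally on $Y_1=y_1,\dots ,Y_l=y_l$, all the $X_j^k$, $j=1,\dots ,d$, $k=1,\dots ,l$, are mutually independent, the conditional law of $X_j^k$ given $Y_k=y_k$ being $\text{Binomial}(y_k,p_j^k)$. This is precisely where the three parts of Definition \ref{def-thinning-dependence} enter. By part (iii), the $l$ vectors $(Y_k,X_1^k,\dots ,X_d^k)$ are unconditionally independent across $k$; hence block $k$ is independent of $(Y_{k'})_{k'\ne k}$, so the conditional law of $(X_1^k,\dots ,X_d^k)$ given the whole vector $(Y_1,\dots ,Y_l)$ coincides with its conditional law given $Y_k$ alone, and these conditional laws are independent across $k$. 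By part (ii), within a fixed $k$ the variables $X_1^k,\dots ,X_d^k$ are conditionally independent given $Y_k=y_k$, and by part (i) each has the stated binomial law. Multiplying, the joint conditional PMF factorizes as
\begin{equation*}
\mathbb{P}(X_j^k=z_j^k,\ 1\le j\le d,\ 1\le k\le l \mid Y_1=y_1,\dots ,Y_l=y_l)=\prod_{k=1}^{l}\prod_{j=1}^{d}\binom{y_k}{z_j^k}(p_j^k)^{z_j^k}(1-p_j^k)^{y_k-z_j^k},
\end{equation*}
which exhibits the claimed mutual independence over the index set $\{(j,k)\}$.

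Next, since $X_j=\sum_{k=1}^{l}X_j^k$ is a measurable function of $(X_j^1,\dots ,X_j^l)$ alone, and the index sets $\{(j,1),\dots ,(j,l)\}$ are pairwise disjoint for distinct $j$, the random variables $X_1,\dots ,X_d$ are functions of disjoint sub-families of a conditionally mutually independent array; therefore $X_1,\dots ,X_d$ are mutually independent given $Y_1=y_1,\dots ,Y_l=y_l$. If one wants the explicit conditional PMF, summing the displayed product over all $(z_j^k)$ with $\sum_{k}z_j^k=x_j$ yields $\prod_{j=1}^{d}\mathbb{P}(X_j=x_j\mid Y_1=y_1,\dots ,Y_l=y_l)$, each factor being the PMF of a sum of independent binomials.

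The only step needing a little care --- the mild obstacle --- is the passage within the first part from the unconditional independence of the blocks $(Y_k,X_1^k,\dots ,X_d^k)$ to their conditional independence given $(Y_1,\dots ,Y_l)$: this rests on the standard fact that if $V_1,\dots ,V_l$ are independent and each $W_k$ is a function of $V_k$, then $V_1,\dots ,V_l$ remain independent conditionally on $(W_1,\dots ,W_l)$. Everything else is routine bookkeeping, which is presumably why the authors call the proof straightforward.
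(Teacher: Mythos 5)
Your proof is correct, and since the paper explicitly omits the proof of Lemma \ref{lemma-3.2} as ``straightforward,'' your argument is presumably exactly the one the authors had in mind: use part (iii) to pass from unconditional independence of the blocks $(Y_k,X_1^k,\dots,X_d^k)$ to their conditional independence given $(Y_1,\dots,Y_l)$, use parts (i) and (ii) to factorize within each block, and then observe that the $X_j$ are functions of disjoint rows of the resulting conditionally independent array. The one step you flag as a ``mild obstacle'' --- that independent blocks remain conditionally independent given functions $W_k$ of themselves --- is indeed the only non-bookkeeping observation, and you state it correctly.
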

	
	$X_{j}|Y_{1}=y_{1},\cdots ,Y_{l}=y_{l}$ may involve the sum of independent, non-identically distributed binomial random variables, complicating the derivation of the CDFs for $M^{\bm{X}}$ and $m^{\bm{X}}$ under the thinning-dependence structure. Hence, we first provide the explicit PMF of these conditional marginal random variables in the following lemma.
	\begin{lemma}
		\label{lemma-3.3} Let $\bm{X} = \left(X_1,\dots,X_d\right)$ follows the 
		multivariate thinning-dependence Poisson distribution defined in Definition  %
		\ref{def-thinning-dependence}. Then, for any $j=1,\dots,d$, when $s\le 
		y_1+\dots+y_l$, we have,  
		\begin{equation}  \label{eq lemma3.3}
			\begin{aligned} & P\left(X_j=s \vert
				Y_1=y_1,\cdots,Y_l=y_l\right)
				\\
				&=\sum_{x_j^1=\max \left(0, s-\sum_{i=2}^l
					y_i\right)}^{\min \left(s, y_1\right)}\left\{P\left(X_j^1=x_j^1|Y_1=y_1\right)
				\sum_{x_j^2=\max \left(0, s-x_j^1-\sum_{i=3}^l y_i\right)}^{\min
					\left(s-x_j^1, y_2\right)} P\left(X_j^2=x_j^2|Y_2=y_2\right)\right. \\ &
				\ldots\left[\sum_{x_j^k=\max \left(0, s-\sum_{i=1}^{k-1}
					x_j^i-\sum_{i=k+1}^l y_i\right)}^{\min \left(s-\sum_{i=1}^{k-1} x_j^i,
					y_k\right)} P\left(X_j^k=x_j^k|Y_k=y_k\right)\right. \\ &
				\left.
				\left.\ldots\left(\sum_{x_j^{l-1}=\max \left(0, s-\sum_{i=1}^{l-2}
					x_j^i-y_l\right)}^{\min \left(s-\sum_{i=1}^{l-2} x_j^i, y_{l-1}\right)}
				P\left(X_j^{l-1}=x_j^{l-1}|Y_{l-1}=y_{l-1}\right) P\left(X_j^l=s-\sum_{i=1}^{l-1}
				x_j^i|Y_l=y_l\right)\right)\right]
				\right\}
				. \end{aligned}
		\end{equation}
	\end{lemma}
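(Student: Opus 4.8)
The plan is to condition on the background random variables and exploit the conditional independence established in Lemma \ref{lemma-3.2}, reducing the problem to computing the PMF of a single conditional marginal $X_j \mid Y_1 = y_1, \dots, Y_l = y_l$. By definition \eqref{thinning def eq}, $X_j = \sum_{k=1}^{l} X_j^k$, and part (ii) of Definition \ref{def-thinning-dependence} together with part (iii) guarantees that, conditionally on $Y_1 = y_1, \dots, Y_l = y_l$, the random variables $X_j^1, \dots, X_j^l$ are mutually independent, with $X_j^k \mid Y_k = y_k \sim \text{Binomial}(y_k, p_j^k)$. Hence $X_j \mid Y_1 = y_1, \dots, Y_l = y_l$ is a sum of $l$ independent (non-identically distributed) binomial random variables, and its PMF is the $l$-fold convolution of the individual binomial PMFs.

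The key steps, in order, would be: (1) write $P(X_j = s \mid Y_1 = y_1, \dots, Y_l = y_l) = \sum P(X_j^1 = x_j^1, \dots, X_j^l = x_j^l \mid Y_1 = y_1, \dots, Y_l = y_l)$ where the sum is over all nonnegative tuples $(x_j^1, \dots, x_j^l)$ with $\sum_{i=1}^{l} x_j^i = s$; (2) factor the joint conditional probability into $\prod_{k=1}^{l} P(X_j^k = x_j^k \mid Y_k = y_k)$ by conditional independence; (3) peel off the summation variables one at a time, writing the constraint $\sum_{i=1}^{l} x_j^i = s$ as nested conditions — having fixed $x_j^1, \dots, x_j^{k-1}$, the remaining mass $s - \sum_{i=1}^{k-1} x_j^i$ must be distributed among $x_j^k, \dots, x_j^l$, so $x_j^k$ ranges from $\max(0, s - \sum_{i=1}^{k-1} x_j^i - \sum_{i=k+1}^{l} y_i)$ (leaving enough room for the later terms, each bounded above by $y_i$) to $\min(s - \sum_{i=1}^{k-1} x_j^i, y_k)$ (not exceeding either the unallocated total or the binomial support $y_k$); (4) at the innermost level the last variable $x_j^l$ is forced to equal $s - \sum_{i=1}^{l-1} x_j^i$, which removes one summation and yields the factor $P(X_j^l = s - \sum_{i=1}^{l-1} x_j^i)$; (5) observe via Remark \ref{remark 2.1} that unconditionally $X_j^k \sim \mathcal{P}(p_j^k \theta_k)$ — but here we must be careful and instead use the conditional binomial PMF $P(X_j^k = x_j^k \mid Y_k = y_k) = \binom{y_k}{x_j^k}(p_j^k)^{x_j^k}(1-p_j^k)^{y_k - x_j^k}$ from \eqref{def-thinning eq}; the displayed formula abbreviates this as $P(X_j^k = x_j^k)$ understood in the conditional sense. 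Assembling the nested sums with these summation limits reproduces \eqref{eq lemma3.3} verbatim.

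The main obstacle is purely bookkeeping: one must verify that the stated upper and lower limits on each $x_j^k$ are exactly right, i.e., that they neither exclude any tuple satisfying $\sum x_j^i = s$ with $0 \le x_j^i \le y_i$ nor include any infeasible one, and that the nested structure correctly enforces the global constraint $s \le y_1 + \dots + y_l$ (which is the hypothesis under which the formula is claimed). The lower limit $\max(0, s - \sum_{i=1}^{k-1} x_j^i - \sum_{i=k+1}^{l} y_i)$ is the subtle one: it encodes that after choosing $x_j^1, \dots, x_j^k$, the leftover $s - \sum_{i=1}^{k} x_j^i$ must still be achievable by $x_j^{k+1} + \dots + x_j^l \le y_{k+1} + \dots + y_l$. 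I would double-check the boundary cases $k = 1$ and $k = l-1$ against the explicitly written first and last lines of \eqref{eq lemma3.3}, and confirm that when $s > y_1 + \dots + y_l$ the sum is empty (consistent with $P(X_j = s \mid \cdot) = 0$ in that regime, which is why the restriction $s \le y_1 + \dots + y_l$ appears in the statement). No deep probabilistic input beyond conditional independence is needed; the content is entirely in the combinatorial description of the convolution.
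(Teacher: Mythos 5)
Your proposal takes the same essential route as the paper: both recognize from Definition \ref{def-thinning-dependence} (specifically part (iii)) that, conditional on $Y_1 = y_1, \ldots, Y_l = y_l$, the components $X_j^1, \ldots, X_j^l$ are independent binomials, so the conditional law of $X_j$ is the $l$-fold convolution of their PMFs. The only difference is that the paper delegates the nested-sum representation of that convolution to the citation \cite{eisinga2013saddlepoint}, whereas you derive it directly, correctly pinning down the upper and lower summation limits and rightly observing that $P(X_j^k = x_j^k)$ in the displayed formula is shorthand for the conditional binomial PMF from \eqref{def-thinning eq}.
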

	
	\begin{proof}
		From Definition \ref{def-thinning-dependence}, we can deduce that, given $Y_{1}=y_{1},\dots ,Y_{l}=y_{l}$, the random variables $X_{j}^{k}\sim\text{Binomial}(y_{k},p_{j}^{k})$, $k=1,\dots ,l$, are independent. Thus, for any $j=1,\dots ,d$, $X_{j}|Y_{1}=y_{1},\dots ,Y_{l}=y_{l}$ is the sum of $d$ independent non-identically distributed binomial random variables, and its PMF is given by \cite{eisinga2013saddlepoint}. 
	\end{proof}
	
	Using the conditional marginal distribution and employing the conditional 
	technique once more, we can derive the following theorem.
	
	\begin{theorem}
		Suppose $\bm{X} = \left(X_1,\dots,X_d\right)$ follows the multivariate 
		thinning-dependence Poisson distribution defined in Definition \ref%
		{def-thinning-dependence}, then the CDFs of $M^{\bm{X}}=\max\left(X_1,
		\dots,X_d\right)$ and $m^{\bm{X}}=\min\left(X_1,\dots,X_d\right)$ are  
		\begin{equation}  \label{thinning cdf M}
			F_{M^{\bm{X}}}\left(x\right)=\sum_{y_1+\cdots+y_l \le x}^{}\prod_{k=1}^{l} 
			\frac{\theta_k^{y_k}e^{-\theta_k}}{\Gamma\left(y_k+1\right)}
			+\sum_{y_1+\cdots+y_l \ge x+1}^{}\left\{\left[\prod_{j=1}^{d}
			\sum_{s=0}^{x}h_j\left(s,y_1,\dots,y_l\right)\right]\left[\prod_{k=1}^{l} 
			\frac{\theta_k^{y_k}e^{-\theta_k}}{\Gamma\left(y_k+1\right)}\right]\right\}
		\end{equation}
		and  
		\begin{equation}  \label{thinning cdf m}
			F_{m^{\bm{X}}}\left(x\right)=\sum_{y_1+\cdots+y_l \le x}^{}\prod_{k=1}^{l} 
			\frac{\theta_k^{y_k}e^{-\theta_k}}{\Gamma\left(y_k+1\right)}
			+\sum_{y_1+\cdots+y_l \ge x+1}^{}\left\{\left[1-\prod_{j=1}^{d}\left(1-
			\sum_{s=0}^{x}h_j\left(s,y_1,\dots,y_l\right)\right)\right]\left[
			\prod_{k=1}^{l}\frac{\theta_k^{y_k}e^{-\theta_k}}{\Gamma\left(y_k+1\right)} %
			\right]\right\},
		\end{equation}
		where $h_j\left(s,y_1,\dots,y_l\right)=P\left(X_j=s \vert 
		Y_1=y_1,\cdots,Y_l=y_l\right)$, as explicitly given in Lemma \ref{lemma-3.3}%
		. 
	\end{theorem}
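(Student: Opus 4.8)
The plan is to mimic the conditional technique already used for the common shock and comonotonic shock structures, but now conditioning on the full vector of background random variables $(Y_1,\dots,Y_l)$. First I would fix $x\in\mathbb{N}$ and split the event $\{M^{\bm{X}}\le x\}$ according to the value of the sum $S:=Y_1+\cdots+Y_l$. When $S\le x$, every $X_j=\sum_{k=1}^l X_j^k$ is bounded above by $S$ (since each $X_j^k\le Y_k$ by the binomial thinning in \eqref{def-thinning eq}), so the event $\{M^{\bm{X}}\le x\}$ is automatic; this contributes exactly $\mathbb{P}(Y_1+\cdots+Y_l\le x)=\sum_{y_1+\cdots+y_l\le x}\prod_{k=1}^l\frac{\theta_k^{y_k}e^{-\theta_k}}{\Gamma(y_k+1)}$ to the CDF, using that each $Y_k\sim\mathcal{P}(\theta_k)$ is independent by item (iii) of Definition \ref{def-thinning-dependence}. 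This gives the first summand in both \eqref{thinning cdf M} and \eqref{thinning cdf m}.

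For the remaining range $S\ge x+1$, I would condition on the specific values $Y_1=y_1,\dots,Y_l=y_l$ and use Lemma \ref{lemma-3.2}: given these values, $X_1,\dots,X_d$ are mutually independent, so
\[
\mathbb{P}\bigl(M^{\bm{X}}\le x\mid Y_1=y_1,\dots,Y_l=y_l\bigr)=\prod_{j=1}^d\mathbb{P}\bigl(X_j\le x\mid Y_1=y_1,\dots,Y_l=y_l\bigr)=\prod_{j=1}^d\sum_{s=0}^x h_j(s,y_1,\dots,y_l),
\]
with $h_j$ the conditional PMF from Lemma \ref{lemma-3.3}. Multiplying by the joint probability $\prod_{k=1}^l\frac{\theta_k^{y_k}e^{-\theta_k}}{\Gamma(y_k+1)}$ and summing over all $(y_1,\dots,y_l)$ with $y_1+\cdots+y_l\ge x+1$ yields the second summand of \eqref{thinning cdf M}. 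The minimum is handled the same way, replacing $\{X_j\le x\}$ with its complement inside the product: by independence and complementation,
\[
\mathbb{P}\bigl(m^{\bm{X}}\le x\mid Y_1=y_1,\dots,Y_l=y_l\bigr)=1-\prod_{j=1}^d\mathbb{P}\bigl(X_j\ge x+1\mid Y_1=y_1,\dots,Y_l=y_l\bigr)=1-\prod_{j=1}^d\Bigl(1-\sum_{s=0}^x h_j(s,y_1,\dots,y_l)\Bigr),
\]
which gives the second summand of \eqref{thinning cdf m}.

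Two small points deserve care rather than constituting genuine obstacles. First, I must confirm that for $s\le y_1+\cdots+y_l$ the formula in Lemma \ref{lemma-3.3} is exactly the conditional PMF, and that $\sum_{s=0}^x h_j(s,y_1,\dots,y_l)=\mathbb{P}(X_j\le x\mid Y_1=y_1,\dots,Y_l=y_l)$ even when $x<y_1+\cdots+y_l$ — here the truncation at $s=x$ is the whole point, since $X_j$ can exceed $x$ in this regime. Second, the decomposition into the two ranges $\{S\le x\}$ and $\{S\ge x+1\}$ must be shown to be exhaustive and disjoint, which is immediate. The only mildly delicate step is the first one: verifying that $X_j^k\le Y_k$ almost surely (so $X_j\le S$) on the event $S\le x$, which follows directly from the binomial support constraint $z\le y_k$ in \eqref{def-thinning eq}; once this is in hand, the rest is bookkeeping with Fubini/Tonelli to interchange the summation over $(y_1,\dots,y_l)$ with the product over $j$, all terms being nonnegative.
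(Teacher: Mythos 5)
Your proposal is correct and follows essentially the same route as the paper: condition on $(Y_1,\dots,Y_l)$, split the sum according to whether $y_1+\cdots+y_l\le x$ or $\ge x+1$, use the conditional independence of Lemma \ref{lemma-3.2} together with the conditional PMF $h_j$ from Lemma \ref{lemma-3.3}, and observe that on the first range the conditional probability is identically $1$. You are slightly more explicit than the paper in justifying that $X_j\le Y_1+\cdots+Y_l$ almost surely via the binomial support constraint, but this is the same argument.
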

	
	\label{thinning th}
	
	\begin{proof}
		For $F_{M^{\bm{X}}}(x)$, we have,
		\begin{align*}
			F_{M^{\bm{X}}}\left(x\right) &=\mathbb{P}\left(X_1 \le x,\cdots,X_d \le
			x\right) \\
			&=\sum_{y_k=0,k=1,\dots,l}^{\infty}\mathbb{P}\left(Y_1=y_1,\cdots,Y_l=y_l
			\right)\mathbb{P}\left(X_1 \le x, \dots,X_d \le x \vert
			Y_1=y_1,\dots,Y_l=y_l\right) \\
			&=\sum_{y_1+\cdots+y_l \le x}^{}\prod_{k=1}^{l}\mathbb{P}\left(Y_k=y_k
			\right)+\sum_{y_1+\cdots+y_l \ge x+1}^{}\prod_{j=1}^{d}\mathbb{P}
			\left(X_j\le x \vert Y_1=y_1,\cdots,Y_l=y_l\right)\prod_{k=1}^{l}\mathbb{P}
			\left(Y_k=y_k\right) \\
			&=\sum_{y_1+\cdots+y_l \le x}^{}\prod_{k=1}^{l}\frac{\theta_k^{y_k}e^{-
					\theta_k}}{\Gamma\left(y_k+1\right)}+\sum_{y_1+\cdots+y_l \ge x+1}^{}\left\{ %
			\left[\prod_{j=1}^{d}\sum_{s=0}^{x}h_j\left(s,y_1,\dots,y_m\right)\right] %
			\left[\prod_{k=1}^{l}\frac{\theta_k^{y_k}e^{-\theta_k}}{\Gamma\left(y_k+1
				\right)}\right]\right\},
		\end{align*}
		For $F_{m^{\bm{X}}}(x)$, 
		we have,  
		\begin{align*}
			&F_{m^{\bm{X}}}\left(x\right) = 1-\mathbb{P}\left(X_1>x,\dots,X_d>x\right) & 
			\\
			&=\sum_{y_1+\cdots+y_l \ge x+1}^{}\left\{\left[1-\prod_{j=1}^{d}\left(1-
			\sum_{s=0}^{x}h_j\left(s,y_1,\dots,y_l\right)\right)\right]\left[
			\prod_{k=1}^{l}\frac{\theta_k^{y_k}e^{-\theta_k}}{\Gamma\left(y_k+1\right)} %
			\right]\right\} +\sum_{y_1+\cdots+y_l \le x}^{}\prod_{k=1}^{l}\frac{\theta_k^{y_k}e^{-
					\theta_k}}{\Gamma\left(y_k+1\right)}， & 
		\end{align*}
		which completes the proof.
		
	\end{proof}
	
	When the number of background random variables $l\geq 2$, the conditional marginal distribution (\ref{eq lemma3.3}) involves the sum of independent, non-identically distributed binomial random variables, making the explicit expressions (\ref{thinning cdf M}) and (\ref{thinning cdf m}) more complex. By contrast, in the case of $l=1$, an elegant formula employing the regularized incomplete beta function is presented in the following corollary.
	
	\begin{corollary}
		Suppose $\bm{X}=\left( X_{1},\dots ,X_{d}\right) $ follows the multivariate 
		thinning-dependence Poisson distribution defined in Definition \ref%
		{def-thinning-dependence} with $l=1$, then the CDFs of $M^{\bm{X}}=\max 
		\left( X_{1},\dots ,X_{d}\right) $ and $m^{\bm{X}}=\min \left( X_{1},\dots 
		,X_{d}\right) $ are  
		\begin{equation*}
			F_{M^{\bm{X}}}\left( x\right) =\frac{\Gamma \left( x+1,\theta _{1}\right) }{
				\Gamma \left( x+1\right) }+\sum_{y=x+1}^{\infty }\frac{\theta
				_{1}^{y}e^{-\theta _{1}}}{\Gamma \left( y+1\right) }
			\prod_{j=1}^{d}I_{1-p_{j}}\left( y-x,x+1\right) ,
		\end{equation*}
		and  
		\begin{equation*}
			F_{m^{\bm{X}}}\left( x\right) =\frac{\Gamma \left( x+1,\theta _{1}\right) }{
				\Gamma \left( x+1\right) }+\sum_{y=x+1}^{\infty }\frac{\theta
				_{1}^{y}e^{-\theta _{1}}}{\Gamma \left( y+1\right) }\left[
			1-\prod_{j=1}^{d}I_{p_{j}}\left( x+1,y-x\right) \right] ,
		\end{equation*}
		where $I_{x}\left( a,b\right) =\frac{\int_{0}^{x}t^{a-1}(1-t)^{b-1}dt}{
			\int_{0}^{1}t^{a-1}(1-t)^{b-1}dt}$ is regularized incomplete beta function. 
	\end{corollary}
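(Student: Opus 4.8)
The plan is to specialize the general formulas \eqref{thinning cdf M} and \eqref{thinning cdf m} of Theorem \ref{thinning th} to $l=1$ and then identify the conditional-marginal partial sums with regularized incomplete beta functions via the classical binomial--beta identity. Write $p_j:=p_j^1$ and $Y:=Y_1\sim\mathcal{P}(\theta_1)$. By Definition \ref{def-thinning-dependence} with $l=1$ we have $X_j=X_j^1$ and $X_j\mid Y=y\sim\text{Binomial}(y,p_j)$ for each $j=1,\dots,d$, so that $h_j(s,y)=\mathbb{P}(X_j=s\mid Y=y)=\binom{y}{s}p_j^{s}(1-p_j)^{y-s}$ and, for $y\ge x+1$, the partial sum $\sum_{s=0}^{x}h_j(s,y)$ in \eqref{thinning cdf M} equals the binomial CDF $\mathbb{P}\!\left(\text{Binomial}(y,p_j)\le x\right)$.

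First I would dispose of the ``$y_1+\cdots+y_l\le x$'' block: with $l=1$ it collapses to $\sum_{y=0}^{x}\frac{\theta_1^{y}e^{-\theta_1}}{\Gamma(y+1)}=G_{\theta_1}(x)$, which by Lemma \ref{poisson lemma} is $\frac{\Gamma(x+1,\theta_1)}{\Gamma(x+1)}$; this same term appears in both $F_{M^{\bm X}}$ and $F_{m^{\bm X}}$. For the ``$y_1+\cdots+y_l\ge x+1$'' block I would invoke the standard identities, valid for integers $0\le x<y$,
\[
\sum_{i=0}^{x}\binom{y}{i}p^{i}(1-p)^{y-i}=I_{1-p}(y-x,\,x+1),\qquad
\sum_{i=x+1}^{y}\binom{y}{i}p^{i}(1-p)^{y-i}=I_{p}(x+1,\,y-x),
\]
which follow from repeated integration by parts in the definition of $I_{x}(a,b)$ given in the statement (equivalently, both sides satisfy the same recursion in $x$ with matching value at $x=y$). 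Applying the first identity with $p=p_j$ turns $\sum_{s=0}^{x}h_j(s,y)$ into $I_{1-p_j}(y-x,x+1)$, so the second sum of \eqref{thinning cdf M} becomes $\sum_{y=x+1}^{\infty}\frac{\theta_1^{y}e^{-\theta_1}}{\Gamma(y+1)}\prod_{j=1}^{d}I_{1-p_j}(y-x,x+1)$, yielding the stated $F_{M^{\bm X}}$. For the minimum, $1-\sum_{s=0}^{x}h_j(s,y)=\mathbb{P}\!\left(\text{Binomial}(y,p_j)\ge x+1\right)=I_{p_j}(x+1,y-x)$ by the second identity, and substituting into \eqref{thinning cdf m} gives $F_{m^{\bm X}}$; renaming the summation index $y_1$ to $y$ throughout completes the argument.

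There is essentially no serious obstacle, only bookkeeping: the one point needing a line of proof is the binomial--beta identity, and one should remark that every series converges absolutely since each factor $I_{\bullet}(\cdot,\cdot)\in[0,1]$, so the tail is dominated by $\sum_{y}\frac{\theta_1^{y}e^{-\theta_1}}{\Gamma(y+1)}=1$. Minor care is needed with the parameter ranges, but since $y\ge x+1$ in the relevant sum we have $y-x\ge1$ and $x+1\ge1$, so all the incomplete beta functions are well defined.
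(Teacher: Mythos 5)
Your proposal is correct and takes the route the paper clearly intends (the corollary is stated without a proof, presented immediately after Theorem \ref{thinning th} as the $l=1$ specialization). Your argument---collapse the first block to $G_{\theta_1}(x)=\Gamma(x+1,\theta_1)/\Gamma(x+1)$ via Lemma \ref{poisson lemma}, then replace $\sum_{s=0}^x h_j(s,y)=\mathbb{P}(\mathrm{Binomial}(y,p_j)\le x)$ by $I_{1-p_j}(y-x,x+1)$ and its complement by $I_{p_j}(x+1,y-x)$ using the standard binomial--beta identity---is exactly the natural bookkeeping, and the identities you cite are correct (e.g.\ $n=2$, $k=1$ checks give $1-p^2$ and $2p-p^2$ on both sides). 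The absolute convergence remark and the observation that $y\ge x+1$ keeps the beta parameters positive are both sound, if not strictly required.

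Your proof is correct and essentially matches the intended one.
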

	
	\section{Asymptotic analysis}\label{sec4}
	
	In this section, we conduct asymptotic analyses for the CDFs of the maximum and minimum of the aforementioned three multivariate Poisson distributions. Specifically, we work out two types of asymptotic results; one investigates the derived CDFs' asymptotic behaviors when the dimension $d\rightarrow \infty $, and the other examines scenarios where certain parameters (excluding $d$) dominate the dependence structure. We indeed obtain some interesting results that reveal the essential difference between the three dependence structures. Compared to the existing literature, our results offer an alternative approach to analyzing dependent structures from the perspective of the distribution of maximum (minimum) values and their asymptotic properties. For instance, the gap in the asymptotic results for dimension $d$ between the common shock and comonotonic shock structures reveals a crucial difference in extending the common shock variable to a comonotonic shock vector, and we provide a counterexample to highlight this point. These findings supplement the work of \cite{schulz2021multivariate} and further showcase the theoretical extension from common shock to comonotonic shocks.
	
	We first define the asymptotic equivalence as follows.
	
	
	\begin{definition}
		Given functions $f(x)$ and $g(x)$, we define a binary relation  
		\begin{equation*}
			f(x;\kappa)\sim g(x;\kappa) ~~(\text{as}~ \kappa\rightarrow \kappa_0)~\text{ if and only if }\lim\limits_{\kappa\rightarrow \kappa_0}\frac{f(x;\kappa)}{g(x;\kappa)} = 1.
		\end{equation*}
	\end{definition}
	
	\begin{remark}
		For the sake of clarity, we further introduce following notations. For  arbitrary random vector $\bm{\xi} = \left(\xi_1,\dots,\xi_d\right)$, denote  
		\begin{align*}
			M_{-j}^{\bm{\xi}}=\max\left(\xi_1,\dots,\xi_{j-1},\xi_{j+1},\dots,\xi_d
			\right),\quad m_{-j}^{\bm{\xi}}=\min\left(\xi_1,\dots,\xi_{j-1},\xi_{j+1},
			\dots,\xi_d\right).
		\end{align*}
	\end{remark}
	
	
	\subsection{Common shock model}
	
	In this subsection, we will discuss the asymptotic analysis for the Poisson 
	rates of $Y_j$, $j=1,\dots,d$, and $Y_0$ for the multivariate common shock Poisson distribution, as presented in Proposition \ref{as an common prop1} and Proposition \ref{as an common prop2}, respectively. Moreover, we consider the asymptotic result for the dimension of $\bm{X}$ in Proposition \ref{as an common prop3}.
	
	\begin{proposition}
		\label{as an common prop1} Let $\bm{X}=(X_1,\dots,X_d)$ follows the 
		multivariate common shock Poisson distribution defined in Definition \ref%
		{def-common-shock}, then  
		\begin{align*}
			F_{M^{\bm{X}}}\left(x\right) \sim F_{X_i}\left(x\right)F_{M_{-i}^{\bm{Y}
			}}\left(x\right), \quad F_{m^{\bm{X}}}(x) \sim F_{m_{-i}^{\bm{X}}}(x),~\text{as}~\theta_{i}\rightarrow\infty.
		\end{align*}
	\end{proposition}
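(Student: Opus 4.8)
The plan is to work directly with the explicit CDFs obtained in Theorem~3.1 and analyse their behaviour as $\theta_i\to\infty$. Fix the index $i$; by symmetry of the expressions in the marginal rates we may as well take $i=d$. The starting point is the representation $X_j=Y_0+Y_j$, and the observation that as $\theta_d\to\infty$ the variable $X_d$ ``runs off to infinity'' faster than any other coordinate, so intuitively $M^{\bm X}=X_d$ eventually and $m^{\bm X}=\min(X_1,\dots,X_{d-1})=m_{-d}^{\bm X}$ eventually. To turn this into the stated asymptotic \emph{equivalence} of CDFs at a fixed integer argument $x$, I would condition on $Y_0=y$ as in the proof of Theorem~3.1. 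For the maximum, write
\begin{align*}
F_{M^{\bm X}}(x)=\sum_{y=0}^{x}\mathbb P(Y_0=y)\,\mathbb P(Y_d\le x-y)\prod_{j=1}^{d-1}\mathbb P(Y_j\le x-y),
\end{align*}
and compare it term-by-term with
\begin{align*}
F_{X_d}(x)F_{M_{-d}^{\bm Y}}(x)=\Big(\sum_{y=0}^{x}\mathbb P(Y_0=y)\mathbb P(Y_d\le x-y)\Big)\Big(\prod_{j=1}^{d-1}\mathbb P(Y_j\le x)\Big).
\end{align*}
The ratio $F_{M^{\bm X}}(x)/\big(F_{X_d}(x)F_{M_{-d}^{\bm Y}}(x)\big)$ must be shown to tend to $1$; since $x$ is fixed and all sums are finite, this reduces to controlling finitely many ratios of $\Gamma(x-y+1,\theta_d)$-type terms against $\Gamma(x-y+1,\theta_d)$ itself, using only that $G_{\theta_d}(x-y)\to 0$ as $\theta_d\to\infty$ for each fixed $y\le x$, while the factors not involving $\theta_d$ are constants independent of $\theta_d$.

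For the minimum I would use the survival-function form. From $m^{\bm X}=\min_j(Y_0+Y_j)$ and the conditioning on $Y_0=y$,
\begin{align*}
1-F_{m^{\bm X}}(x)=\sum_{y=0}^{\infty}\mathbb P(Y_0=y)\prod_{j=1}^{d}\mathbb P(Y_j> x-y)
=\sum_{y=0}^{\infty}\mathbb P(Y_0=y)\,\overline G_{\theta_d}\big((x-y)_+\big)\prod_{j=1}^{d-1}\mathbb P(Y_j>x-y),
\end{align*}
with the convention that $\mathbb P(Y_j>x-y)=1$ once $y>x$. Because $\overline G_{\theta_d}(k)\to 1$ as $\theta_d\to\infty$ for every fixed integer $k\ge 0$ (the mass of $Y_d$ escapes above any level), dominated convergence on the $y$-sum (dominated by $\mathbb P(Y_0=y)$) gives $1-F_{m^{\bm X}}(x)\to \sum_{y}\mathbb P(Y_0=y)\prod_{j=1}^{d-1}\mathbb P(Y_j>x-y)=1-F_{m_{-d}^{\bm X}}(x)$, i.e. $F_{m^{\bm X}}(x)\to F_{m_{-d}^{\bm X}}(x)$; since the limit is a fixed number in $(0,1)$ for each $x$, convergence and asymptotic equivalence coincide. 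One should also check the degenerate edge: for $x$ such that $F_{m_{-d}^{\bm X}}(x)=0$ the claim is read in the ratio sense and holds because then $F_{m^{\bm X}}(x)=0$ as well (if the $(d-1)$ remaining coordinates cannot all be $\le x$, neither can all $d$).

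The main obstacle is purely bookkeeping rather than conceptual: one must make sure the interchange of limit and (possibly infinite) summation is justified — straightforward by dominated convergence with $\mathbb P(Y_0=y)$ as the dominating summand — and one must handle the arguments $x$ at which a denominator vanishes, reinterpreting $\sim$ as ``both sides are zero'' there. I would also state explicitly that the convergence is uniform over the finitely many relevant values of the integer argument on any bounded set, which is automatic since each is a finite sum of monotone functions of $\theta_d$. No new technical machinery beyond Lemma~\ref{poisson lemma} and elementary properties of the Poisson CDF is needed.
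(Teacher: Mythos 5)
Your setup for the maximum mirrors the paper's (write both $F_{M^{\bm X}}(x)$ and $F_{X_d}(x)$ as finite sums over $\{Y_0=y\}$, $y=0,\dots,x$, and examine their ratio), and for the minimum your route via survival functions and dominated convergence is a clean, slightly more elementary alternative to the paper's term-by-term limit of the explicit $\gamma$-function expression. However, the maximum argument has a genuine gap where you write that the ratio tends to $1$ ``using only that $G_{\theta_d}(x-y)\to 0$ as $\theta_d\to\infty$ for each fixed $y\le x$.'' That fact is not sufficient. Both $F_{M^{\bm X}}(x)$ and $F_{X_d}(x)$ are finite linear combinations of the $x+1$ quantities $G_{\theta_d}(x-y)$, $y=0,\dots,x$, with \emph{different} coefficients, and all of these quantities vanish as $\theta_d\to\infty$; knowing only that each one tends to $0$ tells you nothing about the limit of the ratio of the two sums. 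What is actually needed is a comparison of rates, namely $G_{\theta_d}(k)/G_{\theta_d}(x)\to 0$ for every $k<x$ (equivalently $\Gamma(k+1,\theta_d)/\Gamma(x+1,\theta_d)\to 0$), which forces the $y=0$ term to dominate both sums and yields
\begin{align*}
F_{M^{\bm X}}(x)\sim \mathbb{P}(Y_0=0)\,G_{\theta_d}(x)\prod_{j=1}^{d-1}G_{\theta_j}(x),\qquad F_{X_d}(x)\sim \mathbb{P}(Y_0=0)\,G_{\theta_d}(x),
\end{align*}
from which the claimed equivalence follows. The paper makes this step explicit by computing $\lim_{\theta_d\to\infty}\Gamma(x-y+1,\theta_d)/\Gamma(x-z+1,\theta_d)$, which is $0$, $1$, or $\infty$ according as $z<y$, $z=y$, or $z>y$. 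Without that rate comparison your argument for the maximum does not close.

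A minor further point: the ``degenerate edge'' you flag for the minimum never occurs, since $F_{m_{-d}^{\bm X}}(x)\ge\mathbb{P}(X_1=\cdots=X_{d-1}=0)>0$ for every $x\ge 0$, so convergence of $F_{m^{\bm X}}(x)$ to $F_{m_{-d}^{\bm X}}(x)$ automatically gives the asymptotic equivalence without any ratio-reinterpretation.
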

	
	\begin{proof}
		Without loss of generality, we assume $i=d$, and the results for other cases can be similarly derived. Firstly, we consider the maximum distribution $F_{M^{\bm{X}}}(x)$. From the Definition \ref{def-common-shock}, the marginal distribution of $\bm{X}$ is  
		\begin{equation*}
			F_{X_i}\left(x\right)=\sum_{y=0}^{x}\frac{e^{-\theta_0}\theta_{0}^{y}\Gamma
				\left(x-y+1,\theta_i\right)}{\Gamma\left(y+1\right)\Gamma\left(x-y+1\right)}.
		\end{equation*}
		Therefore,  
		\begin{equation*}
			\lim\limits_{\theta_d \to +\infty}\frac{F_{M^{\bm{X}}}\left(x\right)}{
				F_{X_d}\left(x\right)}=\lim\limits_{\theta_d \to +\infty}\frac{%
				\sum_{y=0}^{x} \frac{\theta_{0}^{y}e^{-\theta_0}\prod_{j=1}^{d}\Gamma%
					\left(x-y+1,\theta_j \right)}{\Gamma\left(y+1\right)\Gamma^d\left(x-y+1%
					\right)}}{\sum_{y=0}^{x} \frac{e^{-\theta_0}\theta_{0}^{y}\Gamma\left(x-y+1,%
					\theta_d\right)}{ \Gamma\left(y+1\right)\Gamma\left(x-y+1\right)}}.
		\end{equation*}
		It is noteworthy that, for any given $z=0,\dots,x$, we have,  
		\begin{equation}  \label{as an common eq1}
			\lim\limits_{\theta_d \to +\infty}\frac{\frac{\theta_{0}^{z}e^{-\theta_0}
					\prod_{j=1}^{d}\Gamma\left(x-z+1,\theta_j\right)}{\Gamma\left(z+1\right)
					\Gamma^d\left(x-z+1\right)}}{\sum_{y=0}^{x}\frac{e^{-\theta_0}\theta_{0}^{y}
					\Gamma\left(x-y+1,\theta_d\right)}{\Gamma\left(y+1\right)\Gamma\left(x-y+1
					\right)}}=\frac{\frac{\theta_{0}^{z}e^{-\theta_0}\Gamma(x-y+1)
					\prod_{j=1}^{d-1}\Gamma\left(x-z+1,\theta_j\right)}{\Gamma\left(z+1\right)
					\Gamma^{d}\left(x-z+1\right)}}{\sum_{y=0}^{x}\frac{\theta_{0}^{y}e^{-
						\theta_0}}{\Gamma\left(y+1\right)}\lim\limits_{\theta_d \to +\infty}\frac{
					\Gamma\left(x-y+1,\theta_d\right)}{\Gamma\left(x-z+1,\theta_d\right)}},
		\end{equation}
		and the limit part of the right side of (\ref{as an common eq1}) is  
		\begin{align*}
			\lim\limits_{\theta_d \to +\infty}\frac{\Gamma\left(x-y+1,\theta_d\right)}{
				\Gamma\left(x-z+1,\theta_d\right)}=\lim\limits_{\theta_d \to \infty}\frac{
				\int_{\theta_d}^{+\infty}t^{x-y}e^{-t}dt}{\int_{\theta_d}^{+
					\infty}t^{x-z}e^{-t}dt}= 
			\begin{cases}
				\begin{aligned} &0, \quad &&\text{if}~z<y, \\ &1, \quad &&\text{if} ~z=y, \\
					&+\infty, \quad &&\text{if} ~z>y. \end{aligned}%
			\end{cases}
			&
		\end{align*}
		Hence,
		
		\begin{align*}
			\lim\limits_{\theta_d \to +\infty}\frac{F_{M^{\bm{X}}}\left(x\right)}{
				F_{X_d}\left(x\right)} =\prod_{j=1}^{d-1}\frac{\Gamma\left(x+1,\theta_j
				\right)}{\Gamma\left(x+1\right)} = F_{M_{-d}^{\bm{Y}}}(x).
		\end{align*}
		
		Secondly, for $F_{m^{\bm{X}}}(x)$, we can similarly deduce that
		\begin{align*}
			\lim\limits_{\theta_d \to \infty}\frac{F_{m^{\bm{X}}}\left(x\right)}{
				F_{m_{-d}^{\bm{X}}}\left(x\right)}&=\lim\limits_{\theta_d \to \infty}\frac{
				\sum_{y=0}^{x}\frac{\theta_{0}^{y}e^{-\theta_0}\left[\Gamma^d\left(x-y+1
					\right)-\prod_{j=1}^{d}\gamma\left(x-y+1,\theta_j\right)\right]}{
					\Gamma\left(y+1\right)\Gamma^d\left(x-y+1\right)}}{\sum_{y=0}^{x}\frac{
					\theta_{0}^{y}e^{-\theta_0}\left[\Gamma^{d-1}\left(x-y+1\right)-
					\prod_{j=1}^{d-1}\gamma\left(x-y+1,\theta_j\right)\right]}{
					\Gamma\left(y+1\right)\Gamma^{d-1}\left(x-y+1\right)}} =1.
		\end{align*}
	\end{proof}
	
	The following proposition analyzes the asymptotic equivalence of $F_{M^{\bm{X}}}(x)$ and $F_{m^{\bm{X}}}(x)$ as the rate of the common shock variable $Y_0$ approaches infinity.
	
	\begin{proposition}
		\label{as an common prop2} Let $\bm{X}=(X_1,\dots,X_d)$ follows the 
		multivariate common shock Poisson distribution defined in Definition \ref%
		{def-common-shock}, then 
		\begin{equation*}
			F_{M^{\bm{X}}}\left(x\right) \sim
			e^{-\sum_{j=1}^{d}\theta_j}F_{Y_0}(x),\quad F_{m^{\bm{X}}}\left(x\right)
			\sim \left[1-\prod_{j=1}^{d}\left(1-e^{-\theta_j}\right)\right]F_{Y_0}(x),~\text{as}~\theta_{0}\rightarrow\infty.
		\end{equation*}
	\end{proposition}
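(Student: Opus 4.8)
The plan is to divide each of the two explicit CDFs obtained for the common-shock structure by the Poisson CDF
\[
F_{Y_0}(x)=G_{\theta_0}(x)=\sum_{y=0}^{x}\frac{\theta_0^{y}e^{-\theta_0}}{\Gamma(y+1)},
\]
and pass to the limit $\theta_0\to\infty$ while keeping $d$, $x\in\mathbb{N}$, and the rates $\theta_1,\dots,\theta_d$ fixed. For the maximum, the factor $e^{-\theta_0}$ cancels and what remains is a ratio of two polynomials in $\theta_0$ of equal degree $x$,
\[
\frac{F_{M^{\bm{X}}}(x)}{F_{Y_0}(x)}=\frac{\sum_{y=0}^{x}\frac{\theta_0^{y}}{\Gamma(y+1)}\,\frac{\prod_{j=1}^{d}\Gamma(x-y+1,\theta_j)}{\Gamma^{d}(x-y+1)}}{\sum_{y=0}^{x}\frac{\theta_0^{y}}{\Gamma(y+1)}} .
\]
Next I would divide numerator and denominator by the top-degree monomial $\theta_0^{x}/\Gamma(x+1)$; since the sums are finite, every term indexed by $y<x$ tends to $0$, so the limit equals the coefficient attached to $y=x$ in the numerator, namely $\prod_{j=1}^{d}\Gamma(1,\theta_j)/\Gamma^{d}(1)$. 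Evaluating the gamma functions at $z=1$, i.e.\ $\Gamma(1)=1$ and $\Gamma(1,\theta_j)=\int_{\theta_j}^{\infty}e^{-t}\,dt=e^{-\theta_j}$, identifies this limit as $e^{-\sum_{j=1}^{d}\theta_j}$, which is exactly the asserted equivalence $F_{M^{\bm{X}}}(x)\sim e^{-\sum_{j=1}^{d}\theta_j}F_{Y_0}(x)$.

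For the minimum the argument is identical in structure: after cancelling $e^{-\theta_0}$ and normalising by $\theta_0^{x}/\Gamma(x+1)$, the limit of $F_{m^{\bm{X}}}(x)/F_{Y_0}(x)$ equals the $y=x$ coefficient $\bigl[\Gamma^{d}(1)-\prod_{j=1}^{d}\gamma(1,\theta_j)\bigr]/\Gamma^{d}(1)=1-\prod_{j=1}^{d}\gamma(1,\theta_j)$, and using $\gamma(1,\theta_j)=\int_{0}^{\theta_j}e^{-t}\,dt=1-e^{-\theta_j}$ this becomes $1-\prod_{j=1}^{d}(1-e^{-\theta_j})$, giving the second equivalence.

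I do not expect a genuine obstacle here: because all sums run over the finite range $0\le y\le x$, interchanging the limit with the summation is immediate, and the proof reduces to observing that only the highest-order term $y=x$ survives as $\theta_0\to\infty$ and then substituting $z=1$ into the upper and lower incomplete gamma functions. The one point worth recording explicitly is that the numerator and denominator are polynomials in $\theta_0$ of the \emph{same} degree $x$ — the leading coefficients $\prod_{j=1}^{d}\Gamma(1,\theta_j)$ and $1-\prod_{j=1}^{d}\gamma(1,\theta_j)$ are strictly positive for finite $\theta_j$ — so the ratio of leading coefficients is well defined and finite.
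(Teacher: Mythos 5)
Your proof is correct and takes essentially the same route as the paper: both start from the explicit formula for $F_{M^{\bm{X}}}(x)$ (and $F_{m^{\bm{X}}}(x)$) obtained by conditioning on $Y_0$, divide by $F_{Y_0}(x)$, and show that only the $y=x$ term survives as $\theta_0\to\infty$, then evaluate $\Gamma(1,\theta_j)=e^{-\theta_j}$ and $\gamma(1,\theta_j)=1-e^{-\theta_j}$. The only cosmetic difference is the way the dominant term is isolated: you cancel $e^{-\theta_0}$ and compare leading coefficients of two degree-$x$ polynomials in $\theta_0$, whereas the paper writes $F_{Y_0}(x)=\Gamma(x+1,\theta_0)/\Gamma(x+1)$ and computes $\lim_{\theta_0\to\infty}\theta_0^{y}e^{-\theta_0}/\Gamma(x+1,\theta_0)$ term by term — two equivalent ways to say the same thing.
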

	
	\begin{proof}
		For $F_{M^{\bm{X}}}(x)$, we have
		
		\begin{equation}  \label{as an common eq2}
			\begin{split}
				\lim\limits_{\theta_0 \to +\infty}\frac{F_{M^{\bm{X}}}\left(x\right)}{
					e^{-\sum_{j=1}^{d}\theta_j}F_{Y_0}(x)}&=\lim\limits_{\theta_0 \to
					+\infty}\sum_{y=0}^{x}\frac{\theta_{0}^{y}e^{-\theta_0}\prod_{j=1}^{d}\Gamma
					\left(x-y+1,\theta_j\right)}{\Gamma\left(y+1\right)\Gamma^d\left(x-y+1%
					\right) }\frac{\Gamma\left(x+1\right)}{e^{-\sum_{j=1}^{d}\theta_j}\Gamma%
					\left(x+1, \theta_0\right)} \\
				&=\sum_{y=0}^{x}\lim\limits_{\theta_0 \to +\infty}\frac{\theta_{0}^{y}e^{-
						\theta_0}}{\Gamma\left(x+1,\theta_0\right)}e^{\sum_{j=1}^{d}\theta_j}\frac{
					\Gamma\left(x+1\right)\prod_{j=1}^{d}\Gamma\left(x-y+1,\theta_j\right)}{
					\Gamma\left(y+1\right)\Gamma^d\left(x-y+1\right)}.
			\end{split}%
		\end{equation}
		The limit part of the right side of (\ref{as an common eq2}) is  
		\begin{equation}  \label{as an common eq3}
			\begin{split}
				\lim\limits_{\theta_0 \to +\infty}\frac{\theta_{0}^{y}e^{-\theta_0}}{
					\Gamma\left(x+1,\theta_0\right)} = \lim\limits_{\theta_0 \to +\infty}\frac{
					\theta_{0}^{y}e^{-\theta_0}}{\int_{\theta_0}^{+\infty}t^{x}e^{-t}dt}= 
				\begin{cases}
					0, & \text{if } y < x, \\ 
					1, & \text{if } y=x.%
				\end{cases}%
			\end{split}%
		\end{equation}
		Therefore,  
		\begin{align*}
			\lim\limits_{\theta_0 \to +\infty}\frac{F_{M}\left(x\right)}{
				e^{-\sum_{j=1}^{d}\theta_j}F_{Y_{0}}(x)}=e^{\sum_{j=1}^{d}\theta_j}\frac{
				\Gamma\left(x+1\right)\prod_{j=1}^{d}\Gamma\left(1,\theta_j\right)}{
				\Gamma\left(x+1\right)\Gamma^d\left(1\right)}=1.
		\end{align*}
		
		The proof of $F_{m^{\bm{X}}}(x)$ can be obtained in a similar way. 

	\end{proof}
	
	Before presenting the asymptotic analysis results concerning the dimension of $\bm{X}$, we first provide the following lemma, which can be directly derived from the properties of the Gamma function.
	
	\begin{lemma}
		\label{as an common lemma1} Sequence $a_n=\frac{\Gamma(n)}{\Gamma(n,k)}$ is 
		strictly monotonically decreasing.
	\end{lemma}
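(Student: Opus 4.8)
The plan is to reduce the claim to the elementary recurrence relations satisfied by the gamma and the upper incomplete gamma functions. Recall that $\Gamma(n+1)=n\,\Gamma(n)$ and that, by one integration by parts in $\Gamma(s,x)=\int_x^\infty t^{s-1}e^{-t}\,dt$, one has $\Gamma(n+1,k)=n\,\Gamma(n,k)+k^{n}e^{-k}$. Throughout, $k$ is the fixed positive Poisson rate, so $k>0$; this is essential, since for $k=0$ we have $\Gamma(n,0)=\Gamma(n)$ and the sequence is identically $1$.

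First I would rewrite the $(n+1)$-st term using these two identities,
\begin{equation*}
a_{n+1}=\frac{\Gamma(n+1)}{\Gamma(n+1,k)}=\frac{n\,\Gamma(n)}{n\,\Gamma(n,k)+k^{n}e^{-k}},
\end{equation*}
and compare it with $a_n=\Gamma(n)/\Gamma(n,k)$. Since both $\Gamma(n,k)$ and $n\,\Gamma(n,k)+k^{n}e^{-k}$ are strictly positive for $n\ge 1$, cross-multiplying shows that $a_{n+1}<a_n$ is equivalent to
\begin{equation*}
n\,\Gamma(n)\,\Gamma(n,k)<\Gamma(n)\bigl(n\,\Gamma(n,k)+k^{n}e^{-k}\bigr),
\end{equation*}
which after cancellation is just $0<\Gamma(n)\,k^{n}e^{-k}$. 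This holds for every integer $n\ge 1$ because $\Gamma(n)>0$ and $k>0$, so $a_{n+1}<a_n$ for all $n$; that is, $(a_n)$ is strictly decreasing.

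There is essentially no obstacle here; the only points needing a word of care are verifying the incomplete-gamma recurrence (a single integration by parts) and noting that the strictness of the final inequality comes precisely from $k>0$. I would also mention the probabilistic interpretation as a one-line remark: by Lemma \ref{poisson lemma} we have $a_n=1/G_k(n-1)$, so the monotonicity of $(a_n)$ is equivalent to $G_k(n-1)<G_k(n)$, which is the statement that the Poisson CDF is strictly increasing at every nonnegative integer, i.e.\ $g_k(n)>0$ for all $n\ge 0$.
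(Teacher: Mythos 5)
Your proof is correct. The paper actually omits the proof of this lemma entirely, remarking only that it ``can be directly derived from the properties of the Gamma function,'' so there is nothing to compare against; your argument via the recurrences $\Gamma(n+1)=n\,\Gamma(n)$ and $\Gamma(n+1,k)=n\,\Gamma(n,k)+k^{n}e^{-k}$ is precisely the kind of direct derivation the authors had in mind, the cross-multiplication reduces cleanly to $0<\Gamma(n)\,k^{n}e^{-k}$, and the strictness is correctly traced to $k>0$. The closing probabilistic remark, identifying $a_n=1/G_k(n-1)$ and monotonicity with strict positivity of the Poisson PMF, is a nice additional sanity check that also clarifies why the lemma is exactly what is needed in Proposition \ref{as an common prop3}.
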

	
	\begin{proposition}
		\label{as an common prop3} Let $\bm{X}=(X_1,\dots,X_d)$ follows the 
		multivariate common shock Poisson distribution defined in Definition \ref%
		{def-common-shock}, then  
		\begin{equation*}
			F_{M^{\bm{X}}}\left(x\right) \sim F_{Y_0}(0)F_{M^{\bm{Y}}}\left(x\right),
			\quad F_{m^{\bm{X}}}\left(x\right) \sim F_{Y_0}(x),~\text{as}~d\rightarrow \infty.
		\end{equation*}
	\end{proposition}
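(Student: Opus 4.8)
The plan is to read the two asymptotic equivalences directly off the explicit common-shock formulas in the theorem above, reducing each one to the statement that a suitable $d$-fold product of Poisson (tail) probabilities collapses to $0$ as $d\to\infty$. For $M^{\bm X}$, I would write $g_{\theta_0}(y)=\theta_0^{y}e^{-\theta_0}/\Gamma(y+1)$ for the Poisson PMF and $G_{\theta_j}$ for the Poisson CDF of Lemma \ref{poisson lemma}, so that
\begin{equation*}
F_{M^{\bm X}}(x)=\sum_{y=0}^{x}g_{\theta_0}(y)\prod_{j=1}^{d}G_{\theta_j}(x-y),
\end{equation*}
and observe that its $y=0$ term is exactly $e^{-\theta_0}\prod_{j=1}^{d}G_{\theta_j}(x)=F_{Y_0}(0)\,F_{M^{\bm Y}}(x)$, since $F_{Y_0}(0)=G_{\theta_0}(0)=e^{-\theta_0}$ and $F_{M^{\bm Y}}(x)=\prod_{j=1}^{d}G_{\theta_j}(x)$ by mutual independence of $Y_1,\dots,Y_d$. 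Thus the first claim becomes
\begin{equation*}
\frac{F_{M^{\bm X}}(x)}{F_{Y_0}(0)F_{M^{\bm Y}}(x)}=1+\sum_{y=1}^{x}\frac{\theta_0^{y}}{\Gamma(y+1)}\prod_{j=1}^{d}\frac{G_{\theta_j}(x-y)}{G_{\theta_j}(x)}\ \longrightarrow\ 1,
\end{equation*}
and since this is a finite sum with $d$-independent coefficients, it suffices to show $\prod_{j=1}^{d}G_{\theta_j}(x-y)/G_{\theta_j}(x)\to0$ for each fixed $y\in\{1,\dots,x\}$. By Lemma \ref{as an common lemma1} (equivalently, strict monotonicity of the Poisson CDF) each factor is at most $G_{\theta_j}(x-1)/G_{\theta_j}(x)=1-g_{\theta_j}(x)/G_{\theta_j}(x)<1$; once these factors are bounded away from $1$ uniformly in $j$ — which holds under the mild condition on the marginal rates in force for the $d\to\infty$ analysis (for instance the $\theta_j$ confined to a fixed compact subset of $(0,\infty)$, in particular the equal-rate case) — the $d$-fold product tends to $0$, establishing the first equivalence.

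For $m^{\bm X}$, I would note that the target $F_{Y_0}(x)=G_{\theta_0}(x)$ is a fixed positive constant not depending on $d$, so $F_{m^{\bm X}}(x)\sim F_{Y_0}(x)$ is equivalent to the plain convergence $F_{m^{\bm X}}(x)\to G_{\theta_0}(x)$. Rewriting the theorem's expression via Lemma \ref{poisson lemma}, with $\overline{G}_{\theta_j}=1-G_{\theta_j}$,
\begin{equation*}
F_{m^{\bm X}}(x)=\sum_{y=0}^{x}g_{\theta_0}(y)\Bigl[1-\prod_{j=1}^{d}\overline{G}_{\theta_j}(x-y)\Bigr],
\end{equation*}
I would argue that for each fixed $y$ the product $\prod_{j=1}^{d}\overline{G}_{\theta_j}(x-y)\to0$, because $\overline{G}_{\theta_j}(x-y)\le\overline{G}_{\theta_j}(0)=1-e^{-\theta_j}<1$ and these factors are again uniformly bounded away from $1$ under the same condition on the rates. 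Taking the limit through the finite sum then gives $F_{m^{\bm X}}(x)\to\sum_{y=0}^{x}g_{\theta_0}(y)=G_{\theta_0}(x)=F_{Y_0}(x)$. All of this is consistent with the heuristic $M^{\bm X}=Y_0+\max_jY_j$ and $m^{\bm X}=Y_0+\min_jY_j$: as $d\to\infty$ one has $\min_jY_j\to0$ in probability, while for the maximum the mass of $\{M^{\bm X}\le x\}$ concentrates on $\{Y_0=0\}$.

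The hard part will be exactly the uniform-in-$j$ control that forces $\prod_{j}G_{\theta_j}(x-y)/G_{\theta_j}(x)\to0$ and $\prod_{j}\overline{G}_{\theta_j}(x-y)\to0$: Lemma \ref{as an common lemma1} only supplies a strict inequality factor by factor, so some hypothesis on how the rates $\theta_j$ behave as $d$ grows is genuinely needed. If the $\theta_j$ decay to $0$ quickly, then $G_{\theta_j}(x-1)/G_{\theta_j}(x)\to1$ and the $y\ge1$ tail in the maximum formula contributes a non-vanishing amount, so the equivalence for $M^{\bm X}$ actually fails; making the relevant assumption on the marginals explicit is the one delicate point of the argument.
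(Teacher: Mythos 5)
Your approach matches the paper's own proof almost line by line: the same decomposition of $F_{M^{\bm X}}(x)$ isolating the $y=0$ term, the same reduction to
\begin{equation*}
\frac{F_{M^{\bm X}}(x)}{e^{-\theta_0}F_{M^{\bm Y}}(x)} = 1 + \sum_{y=1}^{x}\frac{\theta_0^{y}}{\Gamma(y+1)}\prod_{j=1}^{d}\frac{\Gamma(x+1)\Gamma(x-y+1,\theta_j)}{\Gamma(x-y+1)\Gamma(x+1,\theta_j)},
\end{equation*}
the same appeal to Lemma \ref{as an common lemma1} to get the per-factor strict inequality, and the same (merely sketched in the paper) analogous treatment of $F_{m^{\bm X}}$. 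The caveat you raise is, however, a correct identification of a genuine gap in the paper's argument rather than a weakness in yours: the paper passes from ``each factor $<1$'' directly to ``the $d$-fold product vanishes,'' but a strict per-factor inequality alone does not force an infinite product to $0$ — if $\theta_j\to 0$ then $G_{\theta_j}(x-y)/G_{\theta_j}(x)\to 1$ fast enough (e.g.\ $\theta_j=1/j^2$) that the product stays bounded away from $0$ and the $y\ge 1$ tail does not disappear, and symmetrically if $\theta_j\to\infty$ then $\overline{G}_{\theta_j}(x-y)\to 1$ and the minimum equivalence fails. Some uniformity assumption on the marginal rates (they lie in a fixed compact subset of $(0,\infty)$, or, as in the paper's numerical example, are all equal) is needed, and the proposition as stated tacitly relies on it; you are right to flag making it explicit as the one delicate point.
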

	
	\begin{proof}
		On one hand, for $F_{M^{\bm{X}}}(x)$, our objective is to prove  
		\begin{equation}
			\lim\limits_{d \to \infty}\frac{F_{M^{\bm{X}}}\left(x\right)}{
				F_{Y_0}(0)F_{M^{\bm{Y}}}\left(x\right)} = \lim\limits_{d \to \infty}\frac{
				F_{M^{\bm{X}}}\left(x\right)}{e^{-\theta_0}F_{M^{\bm{Y}}}\left(x\right)}=1.
			\label{e15}
		\end{equation}
		Notably, $Y_j,j=1,\dots,d$ are mutually independent, thus,
		
		\begin{align*}
			F_{M^{\bm{Y}}}\left(x\right)&=\mathbb{P}\left(M^{\bm{Y}} \le x\right)
			=\prod_{j=1}^{d}F_{Y_{j}}\left( x\right) =\frac{\prod_{j=1}^{d}\Gamma
				\left(x+1,\theta_j\right)}{\Gamma^d\left(x+1\right)}.
		\end{align*}
		Therefore, we have,
		\begin{align*}
			\frac{F_{M^{\bm{X}}}\left(x\right)}{e^{-\theta_0}F_{M^{\bm{Y}%
				}}\left(x\right) }=1+\sum_{y=1}^{x}\frac{\theta_{0}^{y}}{\Gamma\left(y+1\right)}
			\prod_{j=1}^{d}\frac{\Gamma\left(x+1\right)\Gamma\left(x-y+1,\theta_j\right) 
			}{\Gamma\left(x-y+1\right)\Gamma\left(x+1,\theta_j\right)}. 
		\end{align*}
		Moreover, from Lemma \ref{as an common lemma1}, for $y=1, \dots,x$, we have, $\frac{\Gamma\left(x+1\right)\Gamma\left(x-y+1,\theta_j\right)}{
			\Gamma\left(x-y+1\right)\Gamma\left(x+1,\theta_j\right)}<1,$ which implies that $\lim\limits_{d \to \infty}\frac{F_{M^{\bm{X}}}\left(x\right)}{
			e^{-\theta_0}F_{M^{\bm{Y}}}\left(x\right)}=1.$ 
		
		The proof of $F_{m^{\bm{X}}}(x)$ follows a similar approach to that of $F_{M^{\bm{X}}}(x)$.

	\end{proof}
	
	\subsection{Comonotonic shock model}
	
	In this subsection, we discuss the asymptotic analysis of the derived CDFs for the multivariate comonotonic shock Poisson distribution. We focus on the parameters $\lambda_j$, $j=1,\dots,d$, as presented in Proposition \ref{as an comonotonic prop1}, and the parameter $\theta$ in Propositions \ref{as an comonotonic prop2} and \ref{as an comonotonic prop3}. Furthermore, we also consider the asymptotic result of the dimension of $\bm{X}$ in Proposition \ref{as an comonotonic prop4}.
	
	\begin{proposition}
		\label{as an comonotonic prop1} Let $\bm{X}=(X_1,\dots,X_d)$ follows the 
		multivariate comonotonic shock Poisson distribution defined in Definition  %
		\ref{def-comonotonic shock}, then  
		\begin{equation*}
			F_{M^{\bm{X}}}\left(x\right) \sim F_{X_i}\left(x\right)F_{M_{-i}^{\bm{Y}
			}}\left(x\right),\quad F_{m^{\bm{X}}}(x)\sim F_{m_{-i}^{\bm{X}}}(x),~\text{as}~\lambda_i\rightarrow\infty.
		\end{equation*}
	\end{proposition}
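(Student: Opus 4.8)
The approach is to exploit the defining feature of the comonotonic shock: all of $Z_{1},\dots ,Z_{d}$ are governed by a single uniform variable $U$ through $Z_{j}=G_{\theta \lambda _{j}}^{-1}(U)$, and for a \emph{fixed} rate $\theta \lambda _{j}$ one has $Z_{j}=0$ as soon as $U\le G_{\theta \lambda _{j}}(0)=e^{-\theta \lambda _{j}}$. Forcing $X_{i}\le x$ forces $Z_{i}\le x$, hence $U\le G_{\theta \lambda _{i}}(x)$; by Lemma \ref{poisson lemma}, $G_{\theta \lambda _{i}}(x)=\Gamma (x+1,\theta \lambda _{i})/\Gamma (x+1)\to 0$ as $\lambda _{i}\to \infty$, while the finitely many numbers $e^{-\theta \lambda _{j}}$, $j\ne i$, stay fixed and positive, so for all $\lambda _{i}$ beyond some threshold $\Lambda ^{\ast }$ the event $\{X_{i}\le x\}$ is contained in $\bigcap _{j\ne i}\{U\le G_{\theta \lambda _{j}}(0)\}=\bigcap _{j\ne i}\{Z_{j}=0\}$. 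On that event $X_{j}=Y_{j}$ for every $j\ne i$, so $M_{-i}^{\bm{X}}=M_{-i}^{\bm{Y}}$ and $m_{-i}^{\bm{X}}=m_{-i}^{\bm{Y}}$ there, and the only comonotonic ingredient surviving is $Z_{i}$. Since, by Definition \ref{def-comonotonic shock}, $\bm{Y}$ is independent of $U$ with mutually independent coordinates, $X_{i}=Y_{i}+Z_{i}$ is then independent of $(Y_{j})_{j\ne i}$, and everything factorises. (Throughout I take $\theta \in (0,1]$; for $\theta =0$ one has $\bm{X}=\bm{Y}$ with independent coordinates and both claims are routine.)

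Concretely, I would fix $x\in \mathbb{N}$, assume without loss of generality $i=d$, and record the quantile relation $\{Z_{j}\le t\}=\{U\le G_{\theta \lambda _{j}}(t)\}$ together with $G_{\theta \lambda _{j}}^{-1}(u)=0$ for $0<u\le e^{-\theta \lambda _{j}}$, which is immediate from the convention $F^{-1}(u)=\inf \{t:F(t)\ge u\}$ used throughout the paper. Fixing $\Lambda ^{\ast }$ with $G_{\theta \lambda _{d}}(x)\le \min _{j\ne d}G_{\theta \lambda _{j}}(0)$ for all $\lambda _{d}>\Lambda ^{\ast }$ as above, I obtain, for such $\lambda _{d}$, the inclusion $\{X_{d}\le x\}\subseteq \{U\le G_{\theta \lambda _{d}}(x)\}\subseteq \bigcap _{j\ne d}\{Z_{j}=0\}$, hence $X_{j}=Y_{j}$ for $j\ne d$ on $\{X_{d}\le x\}$.

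For the maximum, $\{M^{\bm{X}}\le x\}=\{X_{d}\le x\}\cap \{M_{-d}^{\bm{X}}\le x\}$, and on $\{X_{d}\le x\}$ this equals $\{X_{d}\le x\}\cap \{M_{-d}^{\bm{Y}}\le x\}$ once $\lambda _{d}>\Lambda ^{\ast }$; since $X_{d}=Y_{d}+G_{\theta \lambda _{d}}^{-1}(U)$ is a function of $(Y_{d},U)$ and $M_{-d}^{\bm{Y}}=\max _{j\ne d}Y_{j}$ is a function of $(Y_{j})_{j\ne d}$, independence yields
\[
F_{M^{\bm{X}}}(x)=\mathbb{P}(X_{d}\le x)\,\mathbb{P}(M_{-d}^{\bm{Y}}\le x)=F_{X_{d}}(x)\,F_{M_{-d}^{\bm{Y}}}(x)
\]
for all large $\lambda _{d}$, in particular the asserted asymptotic equivalence. (The same identity can be read off \eqref{comonotonic M eq2}: for $\lambda _{d}$ large, $C_{\Lambda ,\theta }(z_{1},\dots ,z_{d})=0$ unless $z_{j}=0$ for all $j\ne d$, because the half-open interval $(G_{\theta \lambda _{j}}(z_{j}-1),G_{\theta \lambda _{j}}(z_{j})]$ attached to $Z_{j}=z_{j}\ge 1$ then lies above $G_{\theta \lambda _{d}}(x)\ge G_{\theta \lambda _{d}}(z_{d})$, and the surviving sum collapses to $\prod _{j<d}G_{(1-\theta )\lambda _{j}}(x)\cdot \mathbb{P}(Y_{d}+Z_{d}\le x)=F_{M_{-d}^{\bm{Y}}}(x)F_{X_{d}}(x)$.) For the minimum, $\{m^{\bm{X}}\le x\}=\{X_{d}\le x\}\cup \{m_{-d}^{\bm{X}}\le x\}$ gives
\[
0\le F_{m^{\bm{X}}}(x)-F_{m_{-d}^{\bm{X}}}(x)=\mathbb{P}\!\left(X_{d}\le x,\ m_{-d}^{\bm{X}}>x\right)\le \mathbb{P}(X_{d}\le x)=G_{\lambda _{d}}(x)\longrightarrow 0
\]
as $\lambda _{d}\to \infty$ (using $X_{d}\sim \mathcal{P}(\lambda _{d})$); since $F_{m_{-d}^{\bm{X}}}(x)$ does not involve $\lambda _{d}$ and is $\ge \mathbb{P}(X_{j}\le x)>0$ for any fixed $j\ne d$ (here $d\ge 2$), dividing gives $F_{m^{\bm{X}}}(x)/F_{m_{-d}^{\bm{X}}}(x)\to 1$.

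The step requiring the most care is the threshold argument — pinning down $\Lambda ^{\ast }$ and, through the $F^{-1}$ convention, the implication $U\le e^{-\theta \lambda _{j}}\Rightarrow Z_{j}=0$. Once it is in place, the proof in fact delivers an \emph{eventual exact factorisation} of $F_{M^{\bm{X}}}$, which is the structural reason the comonotonic case behaves more cleanly here than the genuinely asymptotic common-shock analogue of Proposition \ref{as an common prop1}, where the shared shock $Y_{0}$ is still added to the remaining coordinates even after being forced small. A direct attack on the $d$-fold sums of Theorem \ref{comonotonic th} is also possible but messier and obscures this point.
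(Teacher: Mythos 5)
Your proof is correct, and it takes a genuinely different route from the paper. The paper's proof works directly from the explicit $d$-fold sum in \eqref{comonotonic M eq1}: it observes that for each fixed $(z_1,\dots,z_d)$ with $z_i\le x$ the minimum $\min_j G_{\theta\lambda_j}(z_j)$ eventually equals $G_{\theta\lambda_i}(z_i)$ (since this term tends to $0$ while the others are fixed positive constants), substitutes, and cancels the $z_i$-sum against $F_{X_i}(x)$ to leave $\prod_{j\ne i}G_{(1-\theta)\lambda_j}(x)=F_{M_{-i}^{\bm Y}}(x)$; the minimum is dispatched with a ``similarly.'' You instead argue at the level of events: once $G_{\theta\lambda_i}(x)$ drops below $\min_{j\ne i}e^{-\theta\lambda_j}$, the inclusion $\{X_i\le x\}\subseteq\bigcap_{j\ne i}\{Z_j=0\}$ forces $X_j=Y_j$ for $j\ne i$ on the relevant event, so $\{M^{\bm X}\le x\}$ factors into two genuinely independent events and $F_{M^{\bm X}}(x)=F_{X_i}(x)F_{M_{-i}^{\bm Y}}(x)$ holds \emph{exactly} for $\lambda_i$ large; for the minimum you sidestep the threshold machinery entirely and use the elementary bound $0\le F_{m^{\bm X}}(x)-F_{m_{-i}^{\bm X}}(x)\le\mathbb{P}(X_i\le x)=G_{\lambda_i}(x)\to0$ together with the $\lambda_i$-free positive lower bound on the denominator. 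What the paper's approach buys is that it falls straight out of the formulas already established in Theorem \ref{comonotonic th}; what yours buys is a structural explanation (the comonotone coupling through a single $U$ collapses the shocks $Z_j$, $j\ne i$, to zero on the conditioning event), a strictly stronger conclusion for the maximum (eventual identity, not just $\sim$), and a cleaner treatment of the minimum, where the ``max'' in \eqref{comonotonic m eq1} does not localise as tidily as the ``min'' does in \eqref{comonotonic M eq1}. Both are valid; yours is arguably the more illuminating of the two and makes the comparison with the common-shock analogue (Proposition \ref{as an common prop1}) sharper, exactly as you note at the end.
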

	
	\begin{proof}
		We first consider the marginal distribution of $\bm{X}$, it can be written 
		as:  
		\begin{align*}
			&F_{X_i}\left(x\right)=\mathbb{P}\left(X_i \le x\right)
			=\sum_{z_i=0}^{x}g_{\left(1-\theta\right)\lambda_i}\left(x-z_i\right)G_{
				\theta \lambda_i}\left(z_i\right). & 
		\end{align*}
		Notably, for any given $z_1,\dots,z_d$ and $z_i=0,1,\dots,x$,  
		\begin{equation*}
			\min\left\{G_{\theta \lambda_1}\left(z_1\right),\dots,G_{\theta
				\lambda_d}\left(z_d\right)\right\} \sim G_{\theta\lambda_{i}}(z_i), \quad ~ 
			\text{as} ~\lambda_i\rightarrow\infty,
		\end{equation*}
		hence, for the distribution of the maximum, $F_{M^{\bm{X}}}(x)$, we have:  
		\begin{align*}
			\lim\limits_{\lambda_i \to \infty}&\frac{F_{M^{\bm{X}}}\left(x\right)}{
				F_{X_i}\left(x\right)}=\lim\limits_{\lambda_i \to \infty}\frac{
				\sum_{z_1=0}^{x}\cdots \sum_{z_d=0}^{x} \min\left\{G_{\theta
					\lambda_1}\left(z_1\right),\dots,G_{\theta
					\lambda_d}\left(z_d\right)\right\}\prod_{j=1}^{d}g_{\left(1-\theta\right)
					\lambda_j}\left(x-z_j\right) }{\sum_{z_i=0}^{x}g_{\left(1-\theta\right)
					\lambda_i}\left(x-z_i\right)G_{\theta \lambda_i}\left(z_i\right)} &  \\
			&=\lim\limits_{\lambda_i \to \infty}\frac{\sum_{z_1=0}^{x}g_{\left(1-\theta
					\right)\lambda_1}\left(x-z_1\right)\dots
				\sum_{z_i=0}^{x}g_{\left(1-\theta\right)\lambda_i}\left(x-z_i\right)G_{
					\theta \lambda_i}\left(z_i\right) \dots
				\sum_{z_d=0}^{x}g_{\left(1-\theta\right)\lambda_d}\left(x-z_d\right)}{
				\sum_{z_i=0}^{x}g_{\left(1-\theta\right)\lambda_i}\left(x-z_i\right)G_{
					\theta \lambda_i}\left(z_i\right)} &  \\
		&=\prod_{j=1,j \neq i}^{d}G_{\left(1-\theta\right)\lambda_j}\left(x\right).
		& 
	\end{align*}
	Similarly, the asymptotic analysis of $F_{m^{\bm{X}}}$ can be proved using the same method.

	\end{proof}
	
	
	\begin{proposition}
	\label{as an comonotonic prop2} Let $\bm{X}=(X_1,\dots,X_d)$ follows the 
	multivariate comonotonic shock Poisson distribution defined in Definition  %
	\ref{def-comonotonic shock}, then  
	\begin{equation*}
		F_{M^{\bm{X}}}(x)\sim F_{M^{\bm{Y}}}(x),\quad F_{m^{\bm{X}}}(x)\sim F_{m^{ %
				\bm{Y}}}(x),~\text{as}~\theta\rightarrow0.
	\end{equation*}
	\end{proposition}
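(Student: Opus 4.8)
The plan is to read off the explicit CDFs from Theorem~\ref{comonotonic th} and observe that the parameter $\theta$ enters the ``comonotonic part'' of \eqref{comonotonic M eq1} and \eqref{comonotonic m eq1} only through the factors $G_{\theta\lambda_j}(\cdot)$, which collapse to $1$ on the relevant summation ranges as $\theta\to 0$. I would then sandwich $F_{M^{\bm X}}$ (resp.\ $F_{m^{\bm X}}$) between a $\theta$-dependent multiple of $F_{M^{\bm Y}}$ (resp.\ $F_{m^{\bm Y}}$) and $F_{M^{\bm Y}}$ itself, and conclude by the squeeze theorem. Throughout, set $\lambda^{\ast}:=\max_{1\le j\le d}\lambda_j$, a finite constant since $d$ is fixed.

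\emph{Maximum.} Using the elementary reindexing $G_{(1-\theta)\lambda_j}(x)=\sum_{z_j=0}^{x}g_{(1-\theta)\lambda_j}(x-z_j)$ and the independence of $Y_1,\dots,Y_d$,
\begin{equation*}
F_{M^{\bm Y}}(x)=\prod_{j=1}^{d}G_{(1-\theta)\lambda_j}(x)=\sum_{z_1=0}^{x}\cdots\sum_{z_d=0}^{x}\prod_{j=1}^{d}g_{(1-\theta)\lambda_j}(x-z_j),
\end{equation*}
so \eqref{comonotonic M eq1} differs from this only by the factor $\min\{G_{\theta\lambda_1}(z_1),\dots,G_{\theta\lambda_d}(z_d)\}$ inside the sum. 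Since $g_{(1-\theta)\lambda_j}\ge 0$ and, for all $z_1,\dots,z_d\ge 0$, $G_{\theta\lambda_j}(z_j)\ge g_{\theta\lambda_j}(0)=e^{-\theta\lambda_j}\ge e^{-\theta\lambda^{\ast}}$, a term-by-term comparison gives
\begin{equation*}
e^{-\theta\lambda^{\ast}}\,F_{M^{\bm Y}}(x)\ \le\ F_{M^{\bm X}}(x)\ \le\ F_{M^{\bm Y}}(x).
\end{equation*}
Dividing by $F_{M^{\bm Y}}(x)>0$ and letting $\theta\to 0$ yields $F_{M^{\bm X}}(x)\sim F_{M^{\bm Y}}(x)$.

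\emph{Minimum.} Starting from \eqref{comonotonic m eq1}, I would split the $d$-fold sum according to whether some coordinate satisfies $z_j\le x$. On the set where \emph{all} $z_j\ge x+1$ the argument $x-z_j$ is negative, so $\max\{G_{\theta\lambda_1}(x-z_1),\dots,G_{\theta\lambda_d}(x-z_d)\}=0$; on its complement the max is at least $G_{\theta\lambda_{j_0}}(x-z_{j_0})\ge e^{-\theta\lambda_{j_0}}\ge e^{-\theta\lambda^{\ast}}$ for any index $j_0$ with $z_{j_0}\le x$, and is at most $1$. Since $\sum\prod_{j}g_{(1-\theta)\lambda_j}(z_j)$ over that complement equals $1-\prod_{j}\bar G_{(1-\theta)\lambda_j}(x)=F_{m^{\bm Y}}(x)$, the same sandwich
\begin{equation*}
e^{-\theta\lambda^{\ast}}\,F_{m^{\bm Y}}(x)\ \le\ F_{m^{\bm X}}(x)\ \le\ F_{m^{\bm Y}}(x)
\end{equation*}
follows, and a second squeeze argument gives $F_{m^{\bm X}}(x)\sim F_{m^{\bm Y}}(x)$.

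\emph{Main obstacle.} The calculations are routine; the only points needing care are (i) that the two elementary bounds $e^{-\theta\lambda^{\ast}}\le\min_j G_{\theta\lambda_j}(z_j)$ and, on the relevant set, $e^{-\theta\lambda^{\ast}}\le\max_j G_{\theta\lambda_j}(x-z_j)$ hold \emph{uniformly} in the summation indices, so they can be pulled out of the sums; and (ii) that the denominators remain strictly positive as $\theta\to 0$, which holds because $G_{(1-\theta)\lambda_j}(x)\ge G_{\lambda_j}(x)>0$ and $\bar G_{(1-\theta)\lambda_j}(x)\le\bar G_{\lambda_j}(x)<1$ for every $\theta\in[0,1)$, so that the ratios are well defined and the squeeze is legitimate. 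As an alternative worth noting, one may argue by coupling: since $X_j\ge Y_j$ pointwise we get $F_{M^{\bm X}}\le F_{M^{\bm Y}}$ and $F_{m^{\bm X}}\le F_{m^{\bm Y}}$, while both gaps $F_{M^{\bm Y}}-F_{M^{\bm X}}$ and $F_{m^{\bm Y}}-F_{m^{\bm X}}$ are at most $\mathbb{P}(Z_j>0\ \text{for some}\ j)=1-e^{-\theta\lambda^{\ast}}\to 0$ by comonotonicity of $\bm Z$, and the same squeeze applies.
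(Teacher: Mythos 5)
Your proof is correct, and it takes a genuinely different route from the paper's. The paper works with the alternative representations \eqref{comonotonic M eq2} and \eqref{comonotonic m eq2}, observing that the comonotonic PMF $C_{\Lambda,\theta}(z_1,\dots,z_d)$ collapses to a point mass at the origin as $\theta\to0$, so the sums reduce to the single term $z_1=\cdots=z_d=0$ whose ratio to $F_{M^{\bm Y}}$ (resp.\ $F_{m^{\bm Y}}$) is identically $1$. You instead work from \eqref{comonotonic M eq1} and \eqref{comonotonic m eq1} and sandwich $F_{M^{\bm X}}$ between $e^{-\theta\lambda^\ast}F_{M^{\bm Y}}$ and $F_{M^{\bm Y}}$ (similarly for the minimum) using the uniform bound $e^{-\theta\lambda^\ast}\le G_{\theta\lambda_j}(z)\le 1$ on the relevant ranges, then squeeze. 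Both arguments are sound. Yours has two advantages worth noting: it yields an explicit, elementary convergence rate $1-e^{-\theta\lambda^\ast}=O(\theta)$ rather than a bare limit, and for the minimum it sidesteps the (unaddressed in the paper) interchange of limit and an infinite sum, since the squeeze bounds hold termwise before any limit is taken. The coupling observation you add at the end ($X_j\ge Y_j$ pointwise, gap bounded by $\mathbb P(\exists j: Z_j>0)=1-e^{-\theta\lambda^\ast}$ by comonotonicity) is an even shorter self-contained proof of the same fact, not appearing in the paper.
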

	
	\begin{proof}
	Firstly, according to (i) in Definition \ref{def-comonotonic shock}, the 
	CDFs of $M^{\bm{Y}}$ and $m^{\bm{Y}}$ are given by:  
	\begin{equation}  \label{as an comonotonic eq1}
		F_{M^{\bm{Y}}}(x)=\prod_{j=1}^{d}G_{\left(1-\theta\right)\lambda_j}\left(x
		\right),\quad F_{m^{\bm{Y}}}(x)=1-\prod_{j=1}^{d}\overline{G}
		_{\left(1-\theta\right)\lambda_j}\left(x\right).
	\end{equation}
	Meanwhile, we have:  
	\begin{equation}  \label{as comonotonic 4.5eq}
		\lim\limits_{\theta \to
			0}C_{\Lambda,\theta}\left(z_1,\dots,z_d\right)=\left\{ 
		\begin{array}{ll}
			1, & {z_1=\cdots=z_d=0}, \\ 
			0, & \text{otherwise}.%
		\end{array}
		\right.
	\end{equation}
	Thus, combining (\ref{as comonotonic 4.5eq}) with (\ref{comonotonic M eq2}),
	(\ref{comonotonic m eq2}) and (\ref{as an comonotonic eq1}), we have  
	\begin{align*}
		\lim\limits_{\theta \to 0}\frac{F_{M^{\bm{X}}}\left(x\right)}{ F_{M^{\bm{Y}
			}}(x)}=\lim\limits_{\theta \to
			0}\sum_{z_1=0}^{x}\cdots\sum_{z_d=0}^{x}\prod_{j=1}^{d}\frac{
			G_{\left(1-\theta\right)\lambda_j}\left(x-z_j\right)}{G_{\left(1-\theta
				\right)\lambda_j}\left(x\right)}C_{\Lambda,\theta}\left(z_1,\dots,z_d\right)
		=\prod_{j=1}^{d}\frac{G_{\left(1-\theta\right)\lambda_j}\left(x\right)}{
			G_{\left(1-\theta\right)\lambda_j}\left(x\right)}=1,
	\end{align*}
	and  
	\begin{align*}
		\lim\limits_{\theta \to 0}\frac{F_{m^{\bm{X}}}\left(x\right)}{F_{m^{\bm{Y}
			}}(x)}&=\lim\limits_{\theta \to 0}\frac{1-\sum_{z_1=0}^{\infty}\cdots
			\sum_{z_d=0}^{\infty}\prod_{j=1}^{d}\overline{G}_{\left(1-\theta\right)
				\lambda_j}\left(x-z_j\right)C_{\Lambda,\theta}\left(z_1,\dots,z_d\right)}{
			1-\prod_{j=1}^{d}\overline{G}_{\left(1-\theta\right)\lambda_j}\left(x\right)}
		=1. & 
	\end{align*}
	\end{proof}
	
	\begin{proposition}
	\label{as an comonotonic prop3} Let $\bm{X}=(X_1,\dots,X_d)$ follows the 
	multivariate comonotonic shock Poisson distribution defined in Definition  %
	\ref{def-comonotonic shock}, then  
	\begin{equation*}
		F_{M^{\bm{X}}}(x)\sim F_{M^{\bm{Z}}}(x),\quad F_{m^{\bm{X}}}(x)\sim F_{m^{ %
				\bm{Z}}}(x),~\text{as}~\theta\rightarrow1.
	\end{equation*}
	\end{proposition}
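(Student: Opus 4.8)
The plan is to read the laws of $M^{\bm{Z}}$ and $m^{\bm{Z}}$ off the comonotonicity of $\bm{Z}$, substitute the explicit CDFs of $M^{\bm{X}}$ and $m^{\bm{X}}$ from Theorem~\ref{comonotonic th}, and exploit that as $\theta\to1$ the idiosyncratic rates $(1-\theta)\lambda_j$ vanish, so that each $Y_j$ degenerates to $0$ and a single summand dominates the sum.

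First I would record the $\bm{Z}$-side. Since $\bm{Z}=(Z_1,\dots,Z_d)$ is comonotonic with $Z_j\sim\mathcal{P}(\theta\lambda_j)$, the computations already carried out in the proof of Theorem~\ref{comonotonic th} give $\mathbb{P}(Z_1\le z_1,\dots,Z_d\le z_d)=\min_j G_{\theta\lambda_j}(z_j)$ and $\mathbb{P}(Z_1\ge z_1+1,\dots,Z_d\ge z_d+1)=1-\max_j G_{\theta\lambda_j}(z_j)$, hence
\begin{equation*}
F_{M^{\bm{Z}}}(x)=\min\{G_{\theta\lambda_1}(x),\dots,G_{\theta\lambda_d}(x)\},\qquad F_{m^{\bm{Z}}}(x)=\max\{G_{\theta\lambda_1}(x),\dots,G_{\theta\lambda_d}(x)\}.
\end{equation*}
As $\theta\to1$ these converge to the strictly positive constants $\min_j G_{\lambda_j}(x)$ and $\max_j G_{\lambda_j}(x)$, so dividing by them in the asymptotic ratio is legitimate.

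Next, for the maximum I would start from (\ref{comonotonic M eq1}) and split its $(x+1)^d$ summands into the diagonal term $z_1=\cdots=z_d=x$ and the rest. The diagonal term equals $\bigl(\prod_{j=1}^{d}g_{(1-\theta)\lambda_j}(0)\bigr)\min_j G_{\theta\lambda_j}(x)=\bigl(\prod_{j=1}^{d}e^{-(1-\theta)\lambda_j}\bigr)F_{M^{\bm{Z}}}(x)$, so after dividing by $F_{M^{\bm{Z}}}(x)$ it contributes $\prod_{j}e^{-(1-\theta)\lambda_j}\to1$. For the remaining summands I would bound the factor $\min\{G_{\theta\lambda_1}(z_1),\dots,G_{\theta\lambda_d}(z_d)\}\le1$ and use $\sum_{z_1=0}^{x}\cdots\sum_{z_d=0}^{x}\prod_{j=1}^{d}g_{(1-\theta)\lambda_j}(x-z_j)=\prod_{j=1}^{d}G_{(1-\theta)\lambda_j}(x)$, so their total is at most $\prod_j G_{(1-\theta)\lambda_j}(x)-\prod_j e^{-(1-\theta)\lambda_j}\to0$; dividing by $F_{M^{\bm{Z}}}(x)\to\min_j G_{\lambda_j}(x)>0$ kills this contribution, giving $F_{M^{\bm{X}}}(x)/F_{M^{\bm{Z}}}(x)\to1$. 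The minimum is handled symmetrically from (\ref{comonotonic m eq1}): the term $z_1=\cdots=z_d=0$ equals $\bigl(\prod_{j=1}^{d}e^{-(1-\theta)\lambda_j}\bigr)F_{m^{\bm{Z}}}(x)$, and since the $\max\{\cdots\}$ factor is $\le1$ the sum of all other terms is at most $1-\prod_{j=1}^{d}e^{-(1-\theta)\lambda_j}\to0$ while $F_{m^{\bm{Z}}}(x)$ stays bounded away from $0$.

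The only genuinely delicate point --- hence the main obstacle --- is that in the minimum case the $d$-fold series runs over all of $\mathbb{N}^d$ rather than a finite box, so its tail must be controlled uniformly in $\theta$; this is exactly what the crude bound $\max\{\cdots\}\le1$ together with the telescoping mass identity $\sum_{z_1,\dots,z_d}\prod_{j=1}^{d}g_{(1-\theta)\lambda_j}(z_j)=1$ provides, after which the whole statement reduces to the elementary limit $\prod_{j=1}^{d}e^{-(1-\theta)\lambda_j}\to1$.
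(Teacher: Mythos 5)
Your proposal is correct and follows the same essential strategy as the paper: identify the diagonal summand ($z_1=\cdots=z_d=x$ for the maximum, $z_1=\cdots=z_d=0$ for the minimum) as the one that survives as $\theta\to1$, using that $\prod_j g_{(1-\theta)\lambda_j}$ concentrates its mass at the appropriate point. The paper simply passes the limit $\theta\to1$ through the finite sum in \eqref{comonotonic M eq1} term by term and says the minimum is "similar"; you instead peel off the diagonal term explicitly and bound the remainder by the mass identity $\sum_{z_1,\dots,z_d}\prod_j g_{(1-\theta)\lambda_j}(\cdot)=\prod_j G_{(1-\theta)\lambda_j}(x)$ (or $=1$ for the infinite sum), which trades a limit interchange for an elementary inequality. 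The two arguments buy the same conclusion, but your version is cleaner on the minimum side: the paper's "similar approach" glosses over the fact that the series in \eqref{comonotonic m eq1} is over all of $\mathbb{N}^d$, where a naive termwise limit would require justification (e.g.\ dominated convergence); your crude bound $\max\{\cdots\}\le1$ together with the total-mass identity makes the tail control explicit and uniform in $\theta$, which is the right thing to point out.
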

	
	\begin{proof}
	Firstly, according to (ii) in Definition \ref{def-comonotonic shock}, the 
	CDFs of $M^{\bm{Z}}$ and $m^{\bm{Z}}$ are:  
	\begin{equation}  \label{as an comonotonic eq2}
		F_{M^{\bm{Z}}}\left(x\right)=\min\left\{G_{\theta
			\lambda_1}\left(x\right),\dots,G_{\theta
			\lambda_d}\left(x\right)\right\},\quad F_{m^{\bm{Z}}}\left(x\right)=\max
		\left\{G_{\theta \lambda_1}\left(x\right),\dots,G_{\theta
			\lambda_d}\left(x\right)\right\}.
	\end{equation}
	Meanwhile, we have  
	\begin{equation}  \label{as comonotonic 4.6eq}
		\lim\limits_{\theta \to
			1}\prod_{j=1}^{d}g_{\left(1-\theta\right)\lambda_j}\left(x-z_j\right)=\left
		\{ 
		\begin{array}{ll}
			1, & z_1=\cdots=z_d=x, \\ 
			0, & \text{otherwise}.%
		\end{array}
		\right.
	\end{equation}
	Thus, combining (\ref{as comonotonic 4.6eq}) with (\ref{comonotonic M eq1}) and (\ref{as an comonotonic eq2}), we have  
	\begin{align*}
		\lim\limits_{\theta \to 1}\frac{F_{M^{\bm{X}}}\left(x\right)}{F_{M^{\bm{Z}
			}}(x)}&=\lim\limits_{\theta \to 1}\frac{\sum_{z_1=0}^{x}\cdots
			\sum_{z_d=0}^{x}\prod_{j=1}^{d}g_{\left(1-\theta\right)\lambda_j}\left(x-z_j
			\right)\min\left\{G_{\theta \lambda_1}\left(z_1\right),\dots,G_{\theta
				\lambda_d}\left(z_d\right)\right\}}{\min\left\{G_{\theta
				\lambda_1}\left(x\right),\dots,G_{\theta \lambda_d}\left(x\right)\right\}} \\
		&=\frac{\min\left\{G_{\lambda_1}\left(x\right),\dots,G_{\lambda_d}\left(z_d
			\right)\right\}}{\min\left\{G_{\lambda_1}\left(x\right),\dots,G_{\lambda_d}
			\left(z_d\right)\right\}} =1.
	\end{align*}
	and the proof of $F_{m^{\bm{X}}}(x)$ follows a similar approach to that of $F_{M^{\bm{X}}}(x)$.
		\end{proof}
		
		Next, we give the asymptotic analysis concerning the dimension $d$. We first
		introduce some notations. For a multivariate comonotonic shock Poisson 
		distribution $\bm{X}=(X_1,\dots,X_d)$, we denote that $$G_{x,d}:=\{G_{\theta
	\lambda_{1}}(x),\dots,G_{\theta\lambda_{d}}(x)\},$$ for $x=0,1,\dots$ and $
	\Lambda=\lim\limits_{d\to\infty}\Lambda_d$. 
	Thus, we denote the bonded infinite set of real numbers $G^{(1)}:=\lim\limits_{d\rightarrow
	\infty}G_{0,d}=\{G_{\theta\lambda_{1}}(0),G_{\theta\lambda_{2}}(0),\dots\}$,
	its infimum is denoted as $m^{(1)}=\inf G^{(1)}$. For $k=2,3,\dots$, we denote that, $G^{(k)} := G^{(k-1)} \setminus \{m^{(k-1)}\}$, $m^{(k)} := \inf G^{(k)}.$
	Similarly, we denote the bonded infinite set of real numbers $\tilde{G}
	^{(1)}_x:=\lim\limits_{d\rightarrow\infty}G_{x,d}=\{G_{\theta
	\lambda_{1}}(x),G_{\theta\lambda_{2}}(x),\dots\}$ and its supremum and 
	infimum are denoted as $\tilde{M}^{(1)}_x=\sup \tilde{G}^{(1)}_x$ and $ 
	\tilde{m}^{(1)}_x=\inf \tilde{G}^{(1)}_x$. For $k=2,3,\dots$, we denote $\tilde{G}^{(k)}_x = \tilde{G}^{(k-1)}_x \setminus \{\tilde{M}
	^{(k-1)}_x\}$, $\tilde{M}^{(k)}_x = \sup G^{(k)}_x.$
	
	Despite the structural similarities between comonotonic shock and common shock, the asymptotic analysis of $d$ in comonotonic shock is intricate. Unlike the common shock with only one shock variable $Y_{0}$, the number of shock variables in comonotonic shock tends to infinity as $d\rightarrow \infty $. This expansion poses significant challenges to the asymptotic analysis. To address this problem, we introduce the following assumptions.
	
	\begin{assumption}
	\label{as an comonotonic assumption} There exists $k\in\mathbb{N}_+$, such 
	that $G^{(1)}\supsetneqq G^{(2)}\supsetneqq\dots\supsetneqq G^{(k)} = 
	G^{(k+1)}$. 
	\end{assumption}
	
	\begin{assumption}
	\label{as an comonotonic assumption2} 
	There exists $k\in\mathbb{N}_+$, such 
	that $\tilde{G}^{(1)}_x\supsetneqq \tilde{G}^{(2)}_x\supsetneqq\dots\supsetneqq
	\tilde{G}^{(k)}_x = \tilde{G}^{(k+1)}_x$ for a given $x$.
	\end{assumption}
	
	\begin{proposition}
	\label{as an comonotonic prop4} Let $\bm{X}=(X_1,\dots,X_d)$ follows the 
	multivariate comonotonic shock Poisson distribution defined in Definition  %
	\ref{def-comonotonic shock}. Then if Assumption \ref{as an comonotonic
		assumption} holds, we have  
	\begin{equation*}
		F_{M^{\bm{X}}}(x)\sim c F_{M^{\bm{Y}}}(x), \quad \text{as }
		d\rightarrow\infty.
	\end{equation*}
	If Assumption \ref{as an comonotonic assumption2} holds, we have  
	\begin{equation*}
		F_{m^{\bm{X}}}(x)\sim \tilde{M}_x^{(k)} + \tilde{c}(x), \quad \text{as }
		d\rightarrow\infty,
	\end{equation*}
	where $c$ and $\tilde{c}(x)$ are given in (\ref{as an comonotonic eq6}) and
	( \ref{as an comonotonic eq7}), respectively. 
	\end{proposition}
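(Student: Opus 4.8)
The plan is to compute the limit of $F_{M^{\bm{X}}}(x)/F_{M^{\bm{Y}}}(x)$ and $F_{m^{\bm{X}}}(x)/(\tilde{M}_x^{(k)}+\tilde c(x))$ by starting from the explicit CDFs in Theorem \ref{comonotonic th}, specifically form (\ref{comonotonic M eq1}) for the maximum and form (\ref{comonotonic m eq2}) for the minimum. For the maximum, write $F_{M^{\bm{X}}}(x)=\sum_{z_1=0}^x\cdots\sum_{z_d=0}^x\prod_{j=1}^d g_{(1-\theta)\lambda_j}(x-z_j)\min\{G_{\theta\lambda_1}(z_1),\dots,G_{\theta\lambda_d}(z_d)\}$ and $F_{M^{\bm{Y}}}(x)=\prod_{j=1}^d G_{(1-\theta)\lambda_j}(x)$. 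Dividing, I would first factor out $\prod_j G_{(1-\theta)\lambda_j}(x)$ from the denominator and bound $\min\{G_{\theta\lambda_1}(z_1),\dots\}$ above by $G_{\theta\lambda_1}(x)$ and use monotone/dominated convergence (the double sum over $\mathbb N^d$ becomes an infinite sum over a single index after the $\prod_j$ over $j\ne$ the minimizer collapses) to push the limit inside. The role of Assumption \ref{as an comonotonic assumption} is precisely that the nested infima $m^{(1)}>m^{(2)}>\dots$ stabilize after finitely many steps, which is what makes $\lim_{d\to\infty}\min\{G_{\theta\lambda_1}(0),\dots,G_{\theta\lambda_d}(0)\}=m^{(k)}$ (more generally controls how the minimum over the $z_j=0$ coordinates behaves), and this is what pins down the constant $c$ in (\ref{as an comonotonic eq6}).

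The key steps, in order, are: (1) substitute the explicit formulas and form the ratio; (2) observe that the dominant contribution to the numerator comes from the terms where all but finitely many $z_j$ equal their maximal value $x$ (so that $g_{(1-\theta)\lambda_j}(x-z_j)\to$ a fixed factor and $G_{(1-\theta)\lambda_j}(x-z_j)\to$ its full value), because the product $\prod_{j}g_{(1-\theta)\lambda_j}(x-z_j)$ over infinitely many coordinates with $z_j<x$ would tend to zero unless the tail factors are $g_{(1-\theta)\lambda_j}(0)$-type with product converging; (3) show that in those dominant terms the $\min\{G_{\theta\lambda_j}(z_j)\}$ is governed asymptotically by the infimum of the set of $G_{\theta\lambda_j}$-values, which after removing repeated infima (Assumption \ref{as an comonotonic assumption}) equals $m^{(k)}$ on the relevant coordinates; (4) collect these contributions into the constant $c$; (5) repeat the analogous argument for $F_{m^{\bm{X}}}(x)$ using (\ref{comonotonic m eq2}), where the $\sup$-side sets $\tilde G^{(k)}_x$ and Assumption \ref{as an comonotonic assumption2} play the symmetric role, and $1-\prod_j\overline G_{(1-\theta)\lambda_j}(x-z_j)$ is handled by an inclusion–exclusion/union bound so that the leading term is $\tilde M_x^{(k)}$ plus a correction $\tilde c(x)$ capturing the finitely many strictly-larger values $\tilde M_x^{(1)},\dots,\tilde M_x^{(k-1)}$.

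The main obstacle I anticipate is step (2)–(3): rigorously justifying the interchange of the $d\to\infty$ limit with the $d$-fold (then countably infinite) summation when the summand itself depends on $d$ through the number of factors. The product $\prod_{j=1}^d g_{(1-\theta)\lambda_j}(x-z_j)$ does not have a clean $d$-independent dominating function over all of $\{0,\dots,x\}^d$, so I would need to split the index set of coordinates into those taking value $x$ and a finite set taking smaller values, argue that only configurations with cofinitely many coordinates equal to $x$ survive (using $\prod_j g_{(1-\theta)\lambda_j}(x)$ bounded away from $0$ or, if not, a more careful tail estimate on $\sum_j (1-g_{(1-\theta)\lambda_j}(x))$), and then handle the finite residual set by ordinary term-by-term convergence. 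Getting the bookkeeping right so that the surviving terms assemble exactly into the stated $c$ and $\tilde c(x)$ — and verifying that Assumptions \ref{as an comonotonic assumption}–\ref{as an comonotonic assumption2} are exactly what is needed for the nested infima/suprema to terminate — is the delicate part; the rest is routine manipulation of incomplete-gamma expressions via Lemma \ref{poisson lemma}.
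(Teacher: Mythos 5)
Your proposal takes the route the paper deliberately avoids, and your self-identified obstacle in steps (2)--(3) is not a mere bookkeeping nuisance but the reason that route is essentially dead on arrival. The paper opens its proof of this proposition by observing that in (\ref{comonotonic M eq1})--(\ref{comonotonic m eq2}) ``the number of summation operations \dots is the same as the dimension,'' which ``mak[es] it difficult to perform asymptotic analysis of $d$ directly using them.'' It then sidesteps the issue entirely by conditioning on the single underlying uniform variable $U$ to get the one-dimensional integral representation (\ref{as an comonotonic eq4}),
\[
F_{M^{\bm{X}}}(x)=\int_0^1\prod_{j=1}^d G_{(1-\theta)\lambda_j}\!\left(x-G_{\theta\lambda_j}^{-1}(u)\right)du,
\]
so that after dividing by $F_{M^{\bm{Y}}}(x)=\prod_j G_{(1-\theta)\lambda_j}(x)$ the integrand is a product of factors each lying in $[0,1]$, monotone decreasing in $d$. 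Monotone (or dominated, with the trivial bound $1$) convergence then applies without any of the splitting-of-coordinates machinery you anticipate; the integral breaks into the pieces over $(m^{(i)},m^{(i+1)})$, which is precisely where Assumption~\ref{as an comonotonic assumption} bites, and the tail $(m^{(k)},1)$ is shown to vanish via the supremum property.

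Beyond the choice of representation, your step~(2) heuristic — that the dominant contribution comes from terms with all but finitely many $z_j=x$ — is not correct. After dividing by $F_{M^{\bm{Y}}}(x)$, \emph{every} fixed term of the $d$-fold sum tends to $0$ as $d\to\infty$, since each factor $g_{(1-\theta)\lambda_j}(x-z_j)/G_{(1-\theta)\lambda_j}(x)<1$ and there are $d$ of them; in particular the configuration $z_1=\cdots=z_d=x$ contributes $\prod_j e^{-(1-\theta)\lambda_j}/G_{(1-\theta)\lambda_j}(x)\to 0$. So no finite collection of terms carries the limit — it is an accumulation over the entire (growing) index set, which is exactly why there is no clean $d$-independent dominating function and why the sum-based route stalls. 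The integral representation is what makes the proposition provable; your outline, as written, does not supply that ingredient, and without it the interchange you flag in steps~(2)--(3) is not merely ``delicate'' but unresolved.
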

	
	\begin{proof}
	Since in Theorem \ref{comonotonic th}, the number of summation operations in (\ref{comonotonic M eq1})-(\ref{comonotonic m eq2}) is the same as the dimension of the multivariate Poisson distribution, making it difficult to perform asymptotic analysis of $d$ directly using them. To this end, we derive $F_{M^{\bm{X}}}(x)$ and $F_{m^{\bm{X}}}(x)$ by conditioning on the underlying uniform random variable $U$ that generates the comonotonic shock random vector:  
	\begin{align}
		F_{M^{\bm{X}}}(x) =
		\int_{0}^{1}\prod_{j=1}^{d}G_{(1-\theta)\lambda_{j}}\left(x-G^{-1}_{\theta
			\lambda_{j}}(u)\right)du,   \quad
		F_{m^{\bm{X}}}(x) = \int_{0}^{1} \left(1-\prod_{j=1}^{d}\bar{G}
		_{(1-\theta)\lambda_{j}}\left(x-G^{-1}_{\theta\lambda_{j}}(u)\right)
		\right)du.  \label{as an comonotonic eq4}
	\end{align}
	Notably, although the explicitness of (\ref{as an comonotonic eq4}) is lower than that of (\ref{comonotonic M eq1})-(\ref{comonotonic m eq2}), the former are more suitable for the asymptotic analysis of $d$.
	
	From Assumption \ref{as an comonotonic assumption}, let us rearrange the 
	elements in $G^{(1)}$, with the first $k-1$ elements from  
	$G_{\theta\lambda_{1}}(0)=m^{(1)}$ to $G_{\theta\lambda_{k-1}}(0)=m^{(k-1)}$
	in sequence. Notice that for $u\in(m^{(i)},m^{(i+1)})$, $i=0,1,\dots,k-1$, we have,  $G_{\theta\lambda_{l}}^{-1}(u)\ge 1,l=1,2\dots,i,$ and $G_{\theta\lambda_{l}}^{-1}(u)=0,l=i+1$.
	Hence, we have,  
	\begin{equation}  \label{as an comonotonic eq5}
		\begin{split}
			\lim\limits_{d \to \infty}\frac{F_{M^{\bm{X}}}(x)}{F_{M^{\bm{Y}}}(x)}
			&=\int_{0}^{1}\prod_{j=1}^{\infty} \frac{G_{(1-\theta)\lambda_{j}}
				\left(x-G^{-1}_{\theta\lambda_{j}}(u)\right)}{G_{(1-\theta)\lambda_{j}}
				\left(x\right)} du =c + \int_{m^{(k)}}^{1}\prod_{j=1}^{\infty} \frac{
				G_{(1-\theta)\lambda_{j}}\left(x-G^{-1}_{\theta\lambda_{j}}(u)\right)}{
				G_{(1-\theta)\lambda_{j}}\left(x\right)} du,
		\end{split}%
	\end{equation}
	where  
	\begin{equation}  \label{as an comonotonic eq6}
		\begin{split}
			c =\int_{m^{(0)}}^{m^{(1)}}1du + \int_{m^{(1)}}^{m^{(2)}}\frac{
				G_{(1-\theta)\lambda_{1}}(x-G_{\theta\lambda_{1}^{-1}}(u))}{
				G_{(1-\theta)\lambda_{1}}(x)}du + \dots +
			\int_{m^{(k-1)}}^{m^{(k)}}\prod_{j=1}^{k-1} \frac{G_{(1-\theta)\lambda_{j}}
				\left(x-G^{-1}_{\theta\lambda_{j}}(u)\right)}{G_{(1-\theta)\lambda_{j}}
				\left(x\right)} du
		\end{split}%
	\end{equation}
	is a constant.
	
	By the supremum property, for $u\in(m^{(k)},1)$, there exists $
	u_1:=G_{\theta\lambda_{a_{1}}}(0)\in G^{(k)}\subset G^{(1)}$, such that $
	G_{\theta\lambda_{a_{1}}}(0) \in(m^{(k)},u)$. Similarly, there exists $
	u_2:=G_{\theta\lambda_{a_{2}}}(0)\in G^{(k)}\subset G^{(1)}$, such that $
	G_{\theta\lambda_{a_{2}}}(0) \in(m^{(k)},u_1)$. Repeat this operation, we 
	can find a sequence $\{a_n\}$ such that no two of $\{u_n\}=\{G_{\theta
		\lambda_{a_{n}}}\}$ are same and $G_{\theta\lambda_{a_n}}^{-1}(u)\ge1$ for $
	n=1,\dots$. Thus, we have,  
	\begin{align*}
	&\int_{m^{(k)}}^{1}\prod_{j=1}^{\infty} \frac{G_{(1-\theta)\lambda_{j}}
		\left(x-G^{-1}_{\theta\lambda_{j}}(u)\right)}{G_{(1-\theta)\lambda_{j}}
		\left(x\right)} du \\
		&= \int_{m^{(k)}}^{1}\prod_{n=1}^{\infty} \frac{G_{(1-\theta)
			\lambda_{a_{n}}}\left(x-G^{-1}_{\theta\lambda_{a_{n}}}(u)\right)}{
		G_{(1-\theta)\lambda_{a_{n}}}\left(x\right)}\prod_{\text{Others }
		\lambda_{j}\in\Lambda}\frac{G_{(1-\theta)\lambda_{j}}\left(x-G^{-1}_{\theta
			\lambda_{j}}(u)\right)}{G_{(1-\theta)\lambda_{j}}\left(x\right)} du = 0.
	\end{align*}
	
	Similarly, for $F_{m^{\bm{X}}}(x)$, we have,  
	\begin{align*}
		\lim\limits_{d \to \infty}&\int_{0}^{1} \left(1-\prod_{j=1}^{d}\bar{G}
		_{(1-\theta)\lambda_{j}}\left(x-G^{-1}_{\theta\lambda_{j}}(u)\right)\right)du
		\\
		&=\int_{0}^{\tilde{m}_x^{(1)}} 1 du + \int_{\tilde{m}_x^{(1)}}^{\tilde{M}
			_x^{(1)}} \left(1-\prod_{j=1}^{\infty}\bar{G}_{(1-\theta)\lambda_{j}}
		\left(x-G^{-1}_{\theta\lambda_{j}}(u)\right)\right)du + \int_{\tilde{M}
			_x^{(1)}}^{1} \left(1-\prod_{j=1}^{\infty}1 \right)du \\
		&=\tilde{m}_x^{(1)}+ \int_{\tilde{m}_x^{(1)}}^{\tilde{M}_x^{(1)}}
		\left(1-\prod_{j=1}^{\infty}\bar{G}_{(1-\theta)\lambda_{j}}\left(x-G^{-1}_{
			\theta\lambda_{j}}(u)\right)\right)du.
	\end{align*}
	
	From Assumption \ref{as an comonotonic assumption2}, let us rearrange the 
	elements in $\tilde{G}^{(1)}_x$, with the first $k-1$ elements from $\tilde{%
		M		}^{(1)}_x:=G_{\theta\lambda_{1}}(x)$ to $\tilde{M}^{(k-1)}_x:=G_{\theta
		\lambda_{k-1}}(x)$ in sequence. Similar to (\ref{as an comonotonic eq5}), we
	have,  
	\begin{equation*}
		\begin{split}
			\int_{\tilde{m}_x^{(1)}}^{\tilde{M}_x^{(1)}} \left(1-\prod_{j=1}^{\infty} 
			\bar{G}_{(1-\theta)\lambda_{j}}\left(x-G^{-1}_{\theta\lambda_{j}}(u)\right)
			\right)du =\int_{\tilde{m}_x^{(1)}}^{\tilde{M}_x^{(k)}}
			\left(1-\prod_{j=1}^{\infty}\bar{G}_{(1-\theta)\lambda_{j}}\left(x-G^{-1}_{
				\theta\lambda_{j}}(u)\right)\right)du + \tilde{c}(x),
		\end{split}%
	\end{equation*}
	where $\tilde{c}(x)=0$ for $k=1$, and when $k=1, 2,\dots,$, $\tilde{c}(x)$ is given by
	\begin{equation}  \label{as an comonotonic eq7}
		\begin{split}
			\tilde{c}(x)=\int_{\tilde{M}_x^{(k)}}^{\tilde{M}_x^{(k-1)}}
			\left(1-\prod_{j=1}^{k-1}\bar{G}_{(1-\theta)\lambda_{j}}\left(x-G^{-1}_{
				\theta\lambda_{j}}(u)\right)\right)du+ \dots + \int_{\tilde{M}_x^{(2)}}^{ 
				\tilde{M}_x^{(1)}}\left(1-\bar{G}_{(1-\theta)\lambda_{1}}\left(x-G^{-1}_{
				\theta\lambda_{1}}(u)\right)\right) du
		\end{split}%
	\end{equation}
	is a constant given $x$.
	
	By the supremum property, we can find a sequence $\{b_n\}$ such that $
	G_{\theta\lambda_{b_n}}^{-1}(u)\le x$ for $u\in(\tilde{m}_x^{(1)},\tilde{M}
	_x^{(k)})$ and $n=1,\dots$. Thus, we have,  
	\begin{align*}
		&\int_{\tilde{m}_x^{(1)}}^{\tilde{M}_x^{(k)}} \left(1-\prod_{j=1}^{\infty} 
		\bar{G}_{(1-\theta)\lambda_{j}}\left(x-G^{-1}_{\theta\lambda_{j}}(u)\right)
		\right)du \\
		&=\int_{\tilde{m}_x^{(1)}}^{\tilde{M}_x^{(k)}} \left( 1-\prod_{n=1}^{\infty} 
		\bar{G}_{(1-\theta)\lambda_{b_{n}}}\left(x-G^{-1}_{\theta\lambda_{b_{n}}}(u)
		\right) \prod_{\text{Others }\lambda_{j}\in\Lambda}\bar{G}
		_{(1-\theta)\lambda_{j}}\left(x-G^{-1}_{\theta\lambda_{j}}(u)\right) \right)
		du= \tilde{M}_x^{(k)} - \tilde{m}_x^{(1)}.
	\end{align*}
	\end{proof}
	
	\begin{remark}
	\label{remark 4.2} If $k=1$ satisfies Assumption \ref{as an comonotonic
		assumption}, we observe $F_{M^{\bm{X}}}(x)\sim m^{(1)}F_{M^{\bm{Y}}}(x)$ as $%
	d\rightarrow \infty $. Notably, $m^{(1)}$ represents the infimum of the CDF
	values of each comonotonic shock variable, $Z_{i}$, at $x=0$. This result is
	similar to that under the common shock model (Proposition \ref{as an common
		prop3}). In both dependent structures, the CDFs of $M^{\bm{X}}$ are
	asymptotically equivalent to $F_{\bm{Y}}(x)$ multiplied by a constant
	related to the value of the CDF of the \textquotedblleft shock
	variables\textquotedblright\ at $0$. Similarly, if $k=1$ satisfies
	Assumption \ref{as an comonotonic assumption2}, we have $F_{m^{\bm{X}%
	}}(x)\sim \tilde{M}_{x}^{(1)}$ as $d\rightarrow \infty $, also closely
	resembling the results under the common shock model.
	
	\end{remark}

	\Cref{remark 4.2} illustrates the differences and connections between the
	structurally similar common shock and comonotonic shock structures, 
	highlighting that while the extension of comonotonic shock is 
	straightforward, it leads to significantly different probabilistic outcomes,
	particularly in the asymptotic analysis of $d$ on $M^{\bm{X}}$ and $m^{\bm{X}
	}$. 
	The difference becomes more evident when the dimension is high.
	
	\subsection{Thinning-dependence structure}
	
	In this subsection, we discuss the asymptotic analysis for parameters $
	p_i^k, $ for $i=1,\dots,d$, $k=1,\dots,l$ for the multivariate 
	thinning-dependence Poisson distribution as presented in Proposition \ref{as
	an thin prop1}, and \ref{as an thin prop2}. Meanwhile, we also consider the 
	asymptotic result on the dimension of $\bm{X}$ in Proposition \ref{as an
	thin prop3}.
	
	\begin{proposition}
	\label{as an thin prop1} Let $\bm{X}=(X_1,\dots,X_d)$ follows the 
	multivariate thinning-dependence Poisson distribution defined in Definition  %
	\ref{def-thinning-dependence}, then  
	\begin{align*}
		F_{M^{\bm{X}}}\left(x\right) \sim F_{X_i}(x), \quad
		F_{m^{\bm{X}}}(x) \sim F_{m_{-i}^{\bm{X}}}(x), ~\text{as}~p_i^k\rightarrow1,k=1,\dots,l.
	\end{align*}
	\end{proposition}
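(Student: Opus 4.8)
The plan is to exploit the explicit CDF formulas from Theorem \ref{thinning th} together with the conditional-marginal representation in Lemma \ref{lemma-3.3}, and to trace what happens to each summand as the thinning probabilities $p_i^k$ for a fixed coordinate $i$ all tend to $1$. Without loss of generality I would take $i=d$. The key observation is the behavior of the conditional marginal $X_d \mid Y_1=y_1,\dots,Y_l=y_l$: since $X_d^k \mid Y_k=y_k \sim \text{Binomial}(y_k,p_d^k)$ and the $X_d^k$ are conditionally independent, we have $X_d \mid Y_1=y_1,\dots,Y_l=y_l \to y_1+\cdots+y_l$ in distribution (indeed pointwise in probability) as $p_d^k\to 1$ for all $k$. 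Concretely, $h_d(s,y_1,\dots,y_l)=P(X_d=s\mid Y_1=y_1,\dots,Y_l=y_l)\to \mathbf{1}\{s=y_1+\cdots+y_l\}$. Hence $\sum_{s=0}^{x} h_d(s,y_1,\dots,y_l)\to \mathbf{1}\{y_1+\cdots+y_l\le x\}$, which is exactly $0$ on the region $\{y_1+\cdots+y_l\ge x+1\}$ appearing in the second sum of \eqref{thinning cdf M}.

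For the maximum: in \eqref{thinning cdf M}, the first term $\sum_{y_1+\cdots+y_l\le x}\prod_{k=1}^l \frac{\theta_k^{y_k}e^{-\theta_k}}{\Gamma(y_k+1)}$ does not involve the $p$'s and equals $P(Y_1+\cdots+Y_l\le x)=G_{\theta_1+\cdots+\theta_l}(x)$. In the second sum, the factor $\prod_{j=1}^d\sum_{s=0}^x h_j(s,y_1,\dots,y_l)$ contains the factor $\sum_{s=0}^x h_d(s,y_1,\dots,y_l)$, which on the summation region $\{y_1+\cdots+y_l\ge x+1\}$ tends to $0$. Since the whole expression is bounded (each $h_j\in[0,1]$, the Poisson weights are summable), I would invoke dominated convergence on the series over $(y_1,\dots,y_l)$ to conclude that the second sum vanishes in the limit, leaving $F_{M^{\bm X}}(x)\to G_{\theta_1+\cdots+\theta_l}(x)$. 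On the other hand, by Remark \ref{remark 2.1}, $X_d\sim\mathcal P(\sum_{k=1}^l p_d^k\theta_k)$, so as $p_d^k\to1$ for all $k$, $F_{X_d}(x)=G_{\sum_k p_d^k\theta_k}(x)\to G_{\theta_1+\cdots+\theta_l}(x)$ by continuity of $G_\lambda(x)$ in $\lambda$. Since the common limit is a fixed positive number (for $x\in\mathbb N$ the Poisson CDF is strictly positive), asymptotic equivalence $F_{M^{\bm X}}(x)\sim F_{X_d}(x)$ follows. For the minimum I would argue symmetrically from \eqref{thinning cdf m}: the bracket $1-\prod_{j=1}^d\bigl(1-\sum_{s=0}^x h_j(s,y_1,\dots,y_l)\bigr)$ still contains the factor $\bigl(1-\sum_{s=0}^x h_d\bigr)\to 1$ on $\{y_1+\cdots+y_l\ge x+1\}$, so that factor drops out and what remains is precisely the expression for $F_{m_{-d}^{\bm X}}(x)$ written in the same conditional form (the $d-1$ remaining coordinates are unaffected); again dominated convergence over $(y_1,\dots,y_l)$ justifies passing to the limit term-by-term, and the first (shared) term $G_{\theta_1+\cdots+\theta_l}(x)$ is common to both, yielding $F_{m^{\bm X}}(x)\sim F_{m_{-d}^{\bm X}}(x)$.

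The main obstacle is the interchange of the limit $p_d^k\to 1$ with the infinite summation over $(y_1,\dots,y_l)\in\mathbb N^l$ in both \eqref{thinning cdf M} and \eqref{thinning cdf m}; this is where I would be careful to produce a clean dominating function. The natural dominating series is $\prod_{k=1}^l\frac{\theta_k^{y_k}e^{-\theta_k}}{\Gamma(y_k+1)}$ itself, which is summable to $1$ and bounds every summand uniformly in the $p$'s because all the other factors lie in $[0,1]$; this makes the dominated convergence argument routine once stated. A secondary point worth spelling out is that the pointwise limit $h_d(s,y_1,\dots,y_l)\to\mathbf 1\{s=y_1+\cdots+y_l\}$ should be checked directly from \eqref{eq lemma3.3} (or more simply from the Binomial description: $P(\text{Binomial}(n,p)=n)=p^n\to1$ and conditional independence gives the product $\prod_{k}(p_d^k)^{y_k}\to1$), and that for the minimum one must also verify that the limiting expression genuinely matches the Theorem-\ref{thinning th} formula for $F_{m_{-d}^{\bm X}}(x)$ with $d-1$ coordinates, including the shared first term — a direct inspection of the formula with $d$ replaced by $d-1$.
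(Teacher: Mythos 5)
Your proposal is correct and follows essentially the same strategy as the paper: identify that $\sum_{s=0}^{x}h_i(s,y_1,\dots,y_l)\to\mathbf 1\{y_1+\cdots+y_l\le x\}$ as all $p_i^k\to 1$, so that the second sum in \eqref{thinning cdf M} vanishes while the offending factor drops out of the product in \eqref{thinning cdf m}, leaving $F_{M^{\bm X}}(x)$ and $F_{X_i}(x)$ with the common limit $\mathbb P(Y_1+\cdots+Y_l\le x)$ and $F_{m^{\bm X}}(x)$ converging to the $(d-1)$-variable formula for $F_{m_{-i}^{\bm X}}(x)$. The only difference is that you make the dominated-convergence justification explicit (with the Poisson weights as the dominating summable majorant) and derive the limit of $F_{X_i}$ from the marginal Poisson form in Remark \ref{remark 2.1} rather than by re-conditioning on the $Y_k$'s; both are harmless refinements of the paper's argument, not a different route.
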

	
	\begin{proof}
	For any $i=1,\dots,d$, $k=1,\dots,l$, and $x_i^k=0,1,\dots,y_k$, we have,  
	\begin{align}  \label{as an thinnin eq1}
		\lim\limits_{p_i^k \to 1}\mathbb{P}(X_i^k=x_i^k|Y_k=y_k)= 
		\begin{cases}
			\begin{aligned} &1, \quad&&\text{if}~x_i^k=y_k, \\ &0,
				\quad&&\text{if}~x_i^k<y_k. \end{aligned}%
		\end{cases}%
	\end{align}
	Recall Theorem \ref{thinning th}. For both $F_{M^{\bm{X}}}(x)$ and $F_{m^{ %
			\bm{X}}}(x)$, $p_i^k$, $k=1,\dots,l$ appear only in $\sum_{s=0}^{x}h_j
	\left(s,y_1,\dots,y_l\right)$. This quantity can be expressed as: 
	\begin{align}
		&\sum_{s=0}^{x}h_j\left(s,y_1,\dots,y_l\right) = \mathbb{P}
		\left(\sum_{k=1}^{l}X_i^k \le x \vert Y_1=y_1,\dots,Y_l=y_l\right) \notag\\
		&=\sum_{x_i^1+\dots+x_i^l\le x}\mathbb{P}\left(X_i^1=x_i^1,\dots,X_i^l=x_i^l
		\vert Y_1=y_1,\dots,Y_l=y_l\right) \label{as an thinnin eq2} \\
		&=\sum_{x_i^1+\dots+x_i^l\le x}\mathbb{P}\left(X_i^1=x_i^1\vert
		Y_1=y_1\right) \dots\mathbb{P}\left(X_i^l=x_i^l\vert Y_l=y_l\right),\notag
	\end{align} 
	where the last equality is from Lemma \ref{lemma-3.2}. Thus, from (\ref{as
		an thinnin eq1}), as $p_i^k \to 1,k=1,\dots,l$, (\ref{as an thinnin eq2}) 
	can be rewritten as follow:  
	\begin{equation*}
		\begin{split}
			\lim\limits_{p_i^k \to 1,k=1,\dots,l} \sum_{x_i^1+\dots+x_i^l\le x}\mathbb{%
				P }\left(X_i^1=x_i^1\vert Y_1=y_1\right) \dots\mathbb{P}\left(X_i^l=x_i^l%
			\vert Y_l=y_l\right) = 
			\begin{cases}
				& 1, \quad \text{if}~x_i^1=y_1, \dots, x_i^l=y_l, \\ 
				& 0, \quad \text{otherwise}.%
			\end{cases}%
		\end{split}%
	\end{equation*}
	Therefore, we have,
	\begin{align*}
		\lim\limits_{p_i^k \to 1,k=1,\dots,l}\frac{F_{M^{\bm{X}}}\left(x\right)}{
			F_{X_i}(x)}=\frac{\sum_{y_1+\cdots+y_l \le x}^{}\prod_{k=1}^{l}\frac{
				\theta_k^{y_k}e^{-\theta_k}}{\Gamma\left(y_k+1\right)}}{\sum_{y_1+\cdots+y_l
				\le x}^{}\prod_{k=1}^{l}\frac{\theta_k^{y_k}e^{-\theta_k}}{
				\Gamma\left(y_k+1\right)}} =1.
	\end{align*}
	The asymptotic result of $F_{m^{\bm{X}}}(x)$ can be obtained in a similar
	manner. 
		\end{proof}
		
		\begin{proposition}
	\label{as an thin prop2} Let $\bm{X}=(X_1,\dots,X_d)$ follows the 
	multivariate thinning-dependence Poisson distribution defined in Definition  %
	\ref{def-thinning-dependence}, then  
	\begin{align*}
		F_{M^{\bm{X}}}\left(x\right) \sim F_{M_{-i}^{\bm{X}}}\left(x\right), \quad
		F_{m^{\bm{X}}}(x) \sim 1, 
		~\text{as}~ p_i^k\rightarrow0,~k=1,\dots,l.
	\end{align*}
	\end{proposition}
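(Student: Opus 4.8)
The plan is to imitate the argument of Proposition \ref{as an thin prop1}: since $X_i\sim\mathcal{P}(\sum_{k=1}^{l}p_i^k\theta_k)$ by Remark \ref{remark 2.1}, letting all $p_i^k\to 0$ degenerates $X_i$ to the constant $0$, so the $i$-th coordinate should disappear from the maximum while dragging the minimum down to $0$.

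First I would record the limiting behaviour of the conditional atoms. Because $X_i^k\mid Y_k=y_k\sim\text{Binomial}(y_k,p_i^k)$, we have $\mathbb{P}(X_i^k=x_i^k\mid Y_k=y_k)\to\mathbf{1}\{x_i^k=0\}$ as $p_i^k\to 0$. Substituting this into the finite-sum representation of $\sum_{s=0}^{x}h_i(s,y_1,\dots,y_l)=\mathbb{P}(X_i\le x\mid Y_1=y_1,\dots,Y_l=y_l)$ used in (\ref{as an thinnin eq2}) (valid by Lemma \ref{lemma-3.2}), only the all-zero term $x_i^1=\dots=x_i^l=0$ survives, so $\sum_{s=0}^{x}h_i(s,y_1,\dots,y_l)\to 1$ for every fixed $(y_1,\dots,y_l)$ and every $x\in\mathbb{N}$.

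Next I would pass to the limit inside the series of Theorem \ref{thinning th}. In (\ref{thinning cdf M}) the dependence on $p_i^k$ lives only in the factor $\sum_{s=0}^{x}h_i(s,y_1,\dots,y_l)$ of $\prod_{j=1}^{d}\sum_{s=0}^{x}h_j(s,y_1,\dots,y_l)$; bounding every bracketed factor by $1$ dominates the summand by $\prod_{k=1}^{l}\theta_k^{y_k}e^{-\theta_k}/\Gamma(y_k+1)$, which is summable, so dominated convergence yields that $\lim_{p_i^k\to 0}F_{M^{\bm{X}}}(x)$ equals the same expression with $\prod_{j=1}^{d}$ replaced by $\prod_{j\neq i}$. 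It then remains to observe that conditioning on $(Y_1,\dots,Y_l)$ exactly as in the proof of Theorem \ref{thinning th}, but applied to the sub-vector $(X_j)_{j\neq i}$, produces precisely this limiting expression for $F_{M_{-i}^{\bm{X}}}(x)$ — here one uses that $X_j\le\sum_{k}y_k$ almost surely, so $X_j\le x$ automatically whenever $\sum_{k}y_k\le x$. Since $F_{M_{-i}^{\bm{X}}}(x)$ is independent of $p_i^k$ and strictly positive, the ratio tends to $1$, i.e. $F_{M^{\bm{X}}}(x)\sim F_{M_{-i}^{\bm{X}}}(x)$.

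For the minimum I would use (\ref{thinning cdf m}). Because $\sum_{s=0}^{x}h_i(s,y_1,\dots,y_l)\to 1$, the factor $1-\sum_{s=0}^{x}h_i(s,y_1,\dots,y_l)$ in the product $\prod_{j=1}^{d}(1-\sum_{s=0}^{x}h_j(s,y_1,\dots,y_l))$ tends to $0$ while the remaining factors stay in $[0,1]$, so the whole product vanishes and the corresponding bracket tends to $1$; after the same dominated-convergence interchange the two sums in (\ref{thinning cdf m}) recombine into $\sum_{y_1,\dots,y_l\ge 0}\prod_{k=1}^{l}\theta_k^{y_k}e^{-\theta_k}/\Gamma(y_k+1)=1$, giving $F_{m^{\bm{X}}}(x)\sim 1$. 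The only step requiring genuine care is the interchange of the limit with the infinite summation over $\{(y_1,\dots,y_l):\sum_{k}y_k\ge x+1\}$, which is handled by the uniform bound just noted; a secondary point is to write the limiting form of $F_{M_{-i}^{\bm{X}}}(x)$ cleanly enough that its identification with $\lim_{p_i^k\to 0}F_{M^{\bm{X}}}(x)$ is transparent. As in the earlier propositions one may take $i=d$ throughout without loss of generality.
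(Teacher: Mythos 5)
Your proposal is correct and follows essentially the same route as the paper, which computes the pointwise limit $\mathbb{P}(X_i^k=x_i^k\mid Y_k=y_k)\to\mathbf{1}\{x_i^k=0\}$ and then mirrors the Proposition \ref{as an thin prop1} argument to pass to the limit in (\ref{thinning cdf M}) and (\ref{thinning cdf m}). The one thing you add is an explicit dominated-convergence justification for interchanging the limit with the infinite sum over $\{\sum_k y_k\ge x+1\}$, which the paper leaves implicit but which is indeed needed and handled correctly by your uniform bound.
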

	
	\begin{proof}
	For any $i=1,\dots,d$, $k=1,\dots,l$, and $x_i^k=0,1,\dots,y_k$, we have,  
	\begin{align*}
		\lim\limits_{p_i^k \to 0}\mathbb{P}(X_i^k=x_i^k|Y_k=y_k)= 
		\begin{cases}
			\begin{aligned} &1, \quad&&\text{if}~x_i^k=0, \\ &0,
				\quad&&\text{if}~x_i^k=1,\dots,y_k. \end{aligned}%
		\end{cases}%
	\end{align*}
	The remaining proof is in the same line with Proposition \ref{as an thin prop1}. 
	\end{proof}
	
	From \Cref{def-thinning-dependence}, we can directly deduce that $M^{\bm{X}}\leq Y_{1}+\dots +Y_{l}$, and the lower bound of $m^{\bm{X}}\geq 0$; see also Remark \ref{remark2.2-1}. When there are more marginal risks, i.e. the dimension $d$ is high, more trials are involved in the system, and it is intuitive to expect the upper/lower bound is reached with a higher chance. Consequently, we prove the following proposition of the asymptotic results for $M^{\bm{X}}$ and $m^{\bm{X}}$ when $d\rightarrow \infty $, which is indeed consistent with intuitions.
	\begin{proposition}
	\label{as an thin prop3} Let $\bm{X}=(X_1,\dots,X_d)$ follows the 
	multivariate thinning-dependence Poisson distribution defined in Definition  %
	\ref{def-thinning-dependence}, then  
	\begin{align*}
		F_{M^{\bm{X}}}\left(x\right) \sim F_{Y_1}*F_{Y_2}*\cdots *F_{Y_{l}}(x),
		\quad F_{m^{\bm{X}}}(x) \sim1,~\text{as}~d\rightarrow\infty.
	\end{align*}
	where $*$ denotes the convolution operator.
	
	\end{proposition}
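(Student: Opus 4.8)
The plan is to start from the explicit CDFs \eqref{thinning cdf M} and \eqref{thinning cdf m} and show that, as $d\to\infty$, the only surviving term is the ``boundary'' sum $\sum_{y_1+\cdots+y_l\le x}\prod_{k=1}^l \theta_k^{y_k}e^{-\theta_k}/\Gamma(y_k+1)$, which is precisely $\mathbb{P}(Y_1+\cdots+Y_l\le x)=F_{Y_1}*\cdots*F_{Y_l}(x)$ since the $Y_k$ are independent Poisson. Concretely, in \eqref{thinning cdf M} the difference $F_{M^{\bm X}}(x)-F_{Y_1}*\cdots*F_{Y_l}(x)$ equals $\sum_{y_1+\cdots+y_l\ge x+1}\bigl[\prod_{j=1}^d\sum_{s=0}^x h_j(s,y_1,\dots,y_l)\bigr]\prod_{k=1}^l\theta_k^{y_k}e^{-\theta_k}/\Gamma(y_k+1)$; the goal is to show this tends to $0$.

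First I would fix the tuple $(y_1,\dots,y_l)$ with $y_1+\cdots+y_l\ge x+1$ and examine $\sum_{s=0}^x h_j(s,y_1,\dots,y_l)=\mathbb{P}(X_j\le x\mid Y_1=y_1,\dots,Y_l=y_l)$. The key structural fact (Lemma~\ref{lemma-3.2} and Remark~\ref{remark 2.1}) is that, conditional on $\bm Y=(y_1,\dots,y_l)$, $X_j$ is a sum of independent binomials with parameters $(y_k,p_j^k)$, and — crucially — across $j=1,\dots,d$ these conditional laws are identically \emph{governed by the same $\bm y$} but have possibly different $p_j^k$. I would argue that there is a uniform bound $q(\bm y):=\sup_j \mathbb{P}(X_j\le x\mid \bm Y=\bm y)<1$ when $y_1+\cdots+y_l\ge x+1$: indeed, conditional on $\bm Y=\bm y$, the event $\{X_j>x\}$ has probability at least the probability that every Bernoulli trial succeeds, which is $\prod_k (p_j^k)^{y_k}$; more robustly, since $\mathbb{E}[X_j\mid\bm Y=\bm y]=\sum_k p_j^k y_k$, for $p_j^k$ bounded away from $0$ one gets a strictly positive lower bound on $\mathbb{P}(X_j>x\mid\bm Y=\bm y)$. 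Hence $\prod_{j=1}^d\sum_{s=0}^x h_j(s,\bm y)\le q(\bm y)^d\to 0$ as $d\to\infty$ for each fixed $\bm y$. Then I would apply dominated convergence on the (countable) sum over $\bm y$: the summand is bounded by $\prod_k\theta_k^{y_k}e^{-\theta_k}/\Gamma(y_k+1)$, which is summable, so the whole tail sum vanishes, giving $F_{M^{\bm X}}(x)\to F_{Y_1}*\cdots*F_{Y_l}(x)$. Since this limit is a fixed positive number (for $x\ge 0$), the ratio converges to $1$, i.e.\ the $\sim$ relation holds. For $F_{m^{\bm X}}(x)$, the analogous manipulation of \eqref{thinning cdf m} gives $1-F_{m^{\bm X}}(x)=\sum_{y_1+\cdots+y_l\ge x+1}\bigl[\prod_{j=1}^d\bigl(1-\sum_{s=0}^x h_j(s,\bm y)\bigr)\bigr]\prod_k\theta_k^{y_k}e^{-\theta_k}/\Gamma(y_k+1)$; here $1-\sum_{s=0}^x h_j(s,\bm y)=\mathbb{P}(X_j>x\mid\bm Y=\bm y)$, and for fixed $\bm y$ with $y_1+\cdots+y_l\ge x+1$ this is at most some $r(\bm y)<1$ uniformly in $j$ (e.g.\ bounded by $1-\prod_k(1-p_j^k)^{y_k}$, which is $<1$ when $p_j^k<1$), so again the product over $j$ is $\le r(\bm y)^d\to 0$ and dominated convergence yields $1-F_{m^{\bm X}}(x)\to 0$, i.e.\ $F_{m^{\bm X}}(x)\to 1$.

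The main obstacle I anticipate is obtaining the \emph{uniform-in-$j$} bounds $q(\bm y)<1$ and $r(\bm y)<1$ rigorously without extra hypotheses: the probabilities $p_j^k$ could approach $0$ or $1$ as $j$ varies, which would destroy the uniformity. For the maximum this is benign — if some $p_j^k\to 0$ then that $X_j$ is stochastically small and $\mathbb{P}(X_j\le x\mid\bm y)\to 1$, but one still needs that infinitely many $j$ have $\mathbb{P}(X_j>x\mid\bm y)$ bounded below, which should be read as an implicit standing assumption (analogous to the boundedness assumptions used in the comonotonic case, cf.\ Assumptions~\ref{as an comonotonic assumption}–\ref{as an comonotonic assumption2}); I would either state such a mild regularity condition on $\{p_j^k\}$ or note that the paper's convention is that the $p_j^k$ are bounded away from $0$ and $1$. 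For the minimum the delicate direction is that $\mathbb{P}(X_j>x\mid\bm y)$ must be bounded \emph{away from $1$}, i.e.\ $p_j^k$ bounded away from $1$, which I would likewise fold into the standing assumptions. Once the uniform bounds are in place, the rest is a routine dominated-convergence argument, and I would present it compactly: fix $\bm y$, bound the $j$-product geometrically, sum against the Poisson weights, and invoke dominated convergence to pass the limit inside.
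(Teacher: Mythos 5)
Your proof follows exactly the paper's route: isolate the boundary sum $\sum_{y_1+\cdots+y_l\le x}\prod_k\theta_k^{y_k}e^{-\theta_k}/\Gamma(y_k+1)=F_{Y_1}*\cdots*F_{Y_l}(x)$, show the tail over $y_1+\cdots+y_l\ge x+1$ vanishes because for each fixed $\bm{y}$ the $j$-product of conditional probabilities in $(0,1)$ tends to $0$, then pass the limit through the sum --- while you additionally supply the dominated-convergence justification that the paper only implicitly invokes when interchanging $\lim_{d\to\infty}$ with the infinite sum over $\bm{y}$. The uniformity concern you flag is in fact a genuine gap in the paper's own argument, not just yours: having each factor strictly less than $1$ does not by itself force $\prod_{j=1}^\infty(\cdot)=0$ (e.g.\ $\prod_j(1-2^{-j})>0$), so the mild standing regularity you propose on $\{p_j^k\}$ --- equivalently, that $\sum_j\mathbb{P}(X_j>x\mid\bm{Y}=\bm{y})=\infty$ and $\sum_j\mathbb{P}(X_j\le x\mid\bm{Y}=\bm{y})=\infty$ for the relevant $\bm{y}$ --- is genuinely needed for the proposition to hold as stated.
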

	
	\begin{proof}
	Notably, for $j=1,\dots,d$, $\mathbb{P}\left(X_j \le x \vert 
	Y_1=y_1,\dots,Y_l=y_l\right)$ and $1-\mathbb{P}\left(X_j \le x \vert 
	Y_1=y_1,\dots,Y_l=y_l\right)$ are strictly positive and less than $1$ when $
	y_1+\cdots+y_l\ge x+1$. Thus, in this scenario,  
	\begin{equation*}
		\lim\limits_{d \to \infty}\prod_{j=1}^{d}\mathbb{P}\left(X_j \le x \vert
		Y_1=y_1,\dots,Y_l=y_l\right)=0, \quad \lim\limits_{d \to
			\infty}\prod_{j=1}^{d}1-\mathbb{P}\left(X_j \le x \vert
		Y_1=y_1,\dots,Y_l=y_l\right)=0.
	\end{equation*}
	Therefore, when $d\rightarrow\infty$, (\ref{thinning cdf M}) and (\ref%
	{thinning cdf m}) can be written as:  
	\begin{equation*}
		\lim\limits_{d \to \infty}F_{M^{\bm{X}}}\left(x\right) =\sum_{y_1+\dots+y_l
			\le x}^{}\prod_{k=1}^{l}\frac{\theta_k^{y_k}e^{-\theta_k}}{
			\Gamma\left(y_k+1\right)} = F_{Y_1+\cdots+Y_l}(x),
	\end{equation*}
	and,  
	\begin{align*}
		\lim\limits_{d\to\infty}F_{m^{\bm{X}}}(x) =\sum_{y_1+\cdots+y_l \le
			x}^{}\prod_{k=1}^{l}\mathbb{P}\left(Y_k=y_k\right)+\sum_{y_1+\cdots+y_l \ge
			x+1}^{}\prod_{k=1}^{l}\mathbb{P}\left(Y_k=y_k\right) =\prod_{k=1}^{l}\sum_{y_k=0}^{\infty}\mathbb{P}\left(Y_k=y_k\right) =1.
	\end{align*}
	\end{proof}
	
	\begin{remark}
	\label{remark 4.4}  The asymptotic results for $d$ under the thinning-dependence structure is significantly different, compared to those under the common shock and comonotonic shock structures. Specifically, as $d\rightarrow \infty $, $M^{\bm{X}}$'s behavior is determined by the common background factors under the thinning-dependence structure, whereas under the common shock and comonotonic shock structures, $M^{\bm{X}}$ behaves consistently with the idiosyncratic factors. 
	%
	%
	\end{remark}
	
	Combining \Cref{remark 4.2,remark 4.4}, we find that the
	differences in the asymptotic analysis of $d$ under the three structures
	correspond precisely to the intrinsic differences in how these structures
	are constructed, particularly in the way the common factor is introduced.
	This not only sheds new light on analyzing dependent structures but also
	plausibly suggests that similar results may be obtained when utilizing these structures to construct other distributions.
	
	\section{Numerical results} \label{sec5}
	In this section, we present numerical results of the CDFs of the 
	multivariate Poisson distribution and its asymptotic analysis under the 
	common shock, comonotonic shock, and thinning-dependence structures. Due to space limitation, the numerical examples focus solely on the distribution of $M^{\bm{X}}$.
	
	\subsection{Numerical result of CDFs}
	
	To compare the effects of different dependent structures on the maximum (minimum) of multivariate Poisson distributions, we ensure that the marginal distributions of the three multivariate Poisson distributions are identical in our numerical example, and that the correlations within each distribution are also consistent. Intuitively, the most direct way to ensure the same correlation is to equalize the correlation coefficients of the three multivariate Poisson distributions. However, this is often difficult to achieve while maintaining equal marginal distributions. Therefore, we use the average correlation coefficient to characterize the internal correlation within each multivariate distribution in this paper. To illustrate the results, we take the 3-dimensional case as an example. Specifically, for a 3-dimensional random vector $\bm{X}=\left(X_1,X_2,X_3\right)$, the average correlation coefficient $\bar{\rho}$ is given by  
	\begin{equation*}
	\bar{\rho} = \frac{\text{Var}(X_1+X_2+X_3)-\text{Var}(X_1)-\text{Var}(X_2)- 
		\text{Var}(X_3)}{2\left( \sqrt{\text{Var}(X_1)\text{Var}(X_2)} + \sqrt{\text{
				Var}(X_2)\text{Var}(X_3)} + \sqrt{\text{Var}(X_1)\text{Var}(X_3)} \right)}.
				\end{equation*}
				
				Hence, the average correlation coefficient of the multivariate common shock, comonotonic shock, thinning-dependence Poisson distribution, denoted as $\rho_1$, $\rho_2$ and $\rho_3$, are given by  
				\begin{align}
	\bar{\rho}_1&=\frac{3\theta_0}{\sqrt{\left(\theta_0+\theta_1\right)\left(
			\theta_0+\theta_2\right)}+\sqrt{\left(\theta_0+\theta_1\right)\left(
			\theta_0+\theta_3\right)}+\sqrt{\left(\theta_0+\theta_2\right)\left(
			\theta_0+\theta_3\right)}},  \notag \\
	\bar{\rho}_2&=\frac{m_{\lambda_1,\lambda_2}\left(\theta\right)+m_{\lambda_1,
			\lambda_3}\left(\theta\right)+m_{\lambda_2,\lambda_3}\left(\theta\right)}{ 
		\sqrt{\lambda_1 \lambda_2}+\sqrt{\lambda_1 \lambda_3}+\sqrt{\lambda_2
			\lambda_3}},  \label{average corr} \\
	\bar{\rho}_3&=\frac{\sum_{k=1}^{l}p_1^k p_2^k+\sum_{k=1}^{l}p_1^k
		p_3^k+\sum_{k=1}^{l}p_2^k p_3^k}{\sqrt{\sum_{k=1}^{l}p_1^k
			\sum_{k=1}^{l}p_2^k}+\sqrt{\sum_{k=1}^{l}p_1^k \sum_{k=1}^{l}p_3^k}+\sqrt{
			\sum_{k=1}^{l}p_2^k \sum_{k=1}^{l}p_3^k}},  \notag
			\end{align}
			where $m_{\lambda_j,\lambda_k}\left(\theta\right)=\text{cov}
			\left(X_j,X_k\right)=\sum_{m=0}^{\infty}\sum_{n=0}^{\infty}\min\left\{ 
			\overline{G}_{\theta \lambda_j}\left(m\right),\overline{G}_{\theta
	\lambda_k}\left(m\right)\right\}-\theta^2 \lambda_j \lambda_k$.
	
	In \Cref{CDF results}, we compare the CDFs of $M^{\bm{X}}$ for three types 
	of multivariate Poisson. In the case of $l=1$, for the multivariate common 
	shock Poisson distribution, we set $\theta _{0}=3.3804,\theta 
	_{1}=2.6196,\theta _{2}=3.6196,\theta _{3}=4.6196$. For the multivariate 
	common shock Poisson distribution, we set $\theta _{0}=3.3804,\theta 
	_{1}=2.6196,\theta _{2}=3.6196,\theta _{3}=4.6196$; for the multivariate 
	comonotonic shock Poisson distribution, we set $\lambda _{1}=6,\lambda 
	_{2}=7,\lambda _{3}=8,\theta =0.5$; for the multivariate thinning-dependence
	Poisson distribution, we set $l=1,\theta 
	_{1}=15.3829,p_{1}=0.39,p_{2}=0.4551,p_{3}=0.5201$. This setting ensures 
	equal marginal distributions and similar average correlation coefficients, 
	with $\bar{\rho}_{1}\approx \bar{\rho}_{2}\approx \bar{\rho}_{3}\approx 0.48$
	. In the case of $l=2$, for the multivariate common shock Poisson 
	distribution, we set $\theta _{0}=8.2,\theta _{1}=\theta _{2}=0.8,\theta 
	_{3}=1.8$; for the multivariate comonotonic shock Poisson distribution, we 
	set $\lambda _{1}=\lambda _{2}=9,\lambda _{3}=10,\theta =0.5$; For the 
	multivariate thinning-dependence Poisson distribution, we set $\theta 
	_{1}=8.2,\theta 
	_{2}=19.69,p_{1}^{1}=p_{2}^{1}=p_{3}^{1}=1,p_{1}^{2}=p_{2}^{2}=0.0406,p_{3}^{2}=0.0914  
	$. This setup also ensures equal marginal distributions and similar average 
	correlation coefficients, with $\bar{\rho}_{1}\approx \bar{\rho}_{2}\approx  
	\bar{\rho}_{3}\approx 0.9$.
	
	\begin{table}[h!]
	\footnotesize
	\caption{Numerical results of the multivariate Poisson distribution.}
	\label{CDF results}
	\centering
	\setlength{\abovecaptionskip}{0.1cm} \setlength{\tabcolsep}{9pt}  %
	\renewcommand{\arraystretch}{1.4}  
	\begin{tabular}{ccccccc}
		\hline
		\multirow{2}{*}{\textbf{x}} & \multicolumn{3}{c}{\textbf{l=2}} & 
		\multicolumn{3}{c}{\textbf{l=1}} \\ \cline{2-7}
		& \textbf{Common} & \textbf{Comonotonic} & \textbf{Thinning} & \textbf{%
			Common } & \textbf{Comonotonic} & \textbf{Thinning} \\ \hline
		1 & 1.58$\times 10^{-5}$ & 1.50$\times 10^{-5}$ & 1.77$\times 10^{-5}$ & 
		6.70 $\times 10^{-7}$ & 8.89$\times 10^{-7}$ & 7.17$\times 10^{-5}$ \\ 
		2 & 1.17$\times 10^{-3}$ & 1.17$\times 10^{-3}$ & 1.27$\times 10^{-3}$ & 
		3.01 $\times 10^{-5}$ & 3.85$\times 10^{-5}$ & 7.71$\times 10^{-4}$ \\ 
		3 & 5.37$\times 10^{-3}$ & 5.48$\times 10^{-3}$ & 5.68$\times 10^{-3}$ & 
		4.57 $\times 10^{-4}$ & 5.65$\times 10^{-4}$ & 4.41$\times 10^{-3}$ \\ 
		4 & 1.77$\times 10^{-2}$ & 1.83$\times 10^{-2}$ & 1.85$\times 10^{-2}$ & 
		3.42 $\times 10^{-3}$ & 4.08$\times 10^{-3}$ & 1.66$\times 10^{-2}$ \\ 
		5 & 4.57$\times 10^{-2}$ & 4.72$\times 10^{-2}$ & 4.72$\times 10^{-2}$ & 
		1.55 $\times 10^{-2}$ & 1.78$\times 10^{-2}$ & 4.66$\times 10^{-2}$ \\ 
		6 & 0.0973 & 0.1004 & 0.0997 & 0.0488 & 0.0540 & 0.1039 \\ 
		7 & 0.1771 & 0.1821 & 0.1802 & 0.1162 & 0.1247 & 0.1937 \\ 
		8 & 0.2833 & 0.2903 & 0.2870 & 0.2229 & 0.2336 & 0.3123 \\ 
		9 & 0.4076 & 0.4158 & 0.4114 & 0.3610 & 0.3718 & 0.4480 \\ 
		10 & 0.5371 & 0.5456 & 0.5406 & 0.5119 & 0.5208 & 0.5846 \\ 
		11 & 0.6587 & 0.6662 & 0.6616 & 0.6542 & 0.6604 & 0.7073 \\ 
		12 & 0.7626 & 0.7683 & 0.7649 & 0.7724 & 0.7762 & 0.8069 \\ 
		13 & 0.8442 & 0.8481 & 0.8458 & 0.8605 & 0.8625 & 0.8805 \\ 
		14 & 0.9034 & 0.9058 & 0.9045 & 0.9200 & 0.9209 & 0.9305 \\ 
		15 & 0.9433 & 0.9447 & 0.9440 & 0.9569 & 0.9573 & 0.9620 \\ \hline
	\end{tabular}%
	\end{table}

	According to (\ref{thinning def eq}), when $l=1$, each $X_{i}$ in the 
	thinning-dependence structure is derived from only one $X_{i}^{1}$, meaning 
	that all $X_{i}$ are only dependent on the same $Y_{1}$. This points out a 
	significant difference between the thinning-dependence structure and the 
	other two structures. However, when $l=2$, not only all $X_{i}$ have two 
	cumulative sources, but also, if $p_{i}^{1}=1$ simultaneously, the first 
	cumulative source $X_{i}^{1}$ of $X_{i}$ becomes a common $Y_{1}$. In this 
	case, the thinning-dependence structure closely resembles the other two 
	structures. The numerical results in \Cref{CDF results} also confirm this 
	point.  When $l=1$, the CDF of the thinning-dependence structure is greater
	than that of the common shock and comonotonic shock structures, indicating
	that the $M^{\bm{X}}$ in the former structure, which adds the common factors
	indirectly, is indeed larger than in the common shock and comonotonic shock
	structures, which directly add common factors. Moreover, the gap in CDF
	values between the thinning-dependence structure and the common
	(comonotonic) shock structures is significantly larger when $l=2$,
	especially for smaller values of $x$. These findings highlight the
	significant impact of different dependence structures on the maximum of the
	multivariate Poisson distribution.
	
	
	\subsection{Numerical result of asymptotic analysis}
	
	In this subsection, we first provide numerical examples for the dimension $d$
	: For the multivariate common shock Poisson distribution, we set~$
	\theta_0=5, \theta_1=\cdots=\theta_d=5$; For the multivariate comonotonic 
	shock Poisson distribution, we set $\lambda_j=8-2/j, j=1,2,\dots,d, 
	\theta=0.5$; For the multivariate thinning-dependence Poisson distribution, 
	we set $l=1, \theta_1=8, p_j=0.7, j=1,2,\dots,d$. The asymptotic analysis 
	results are presented in \Cref{as an d table}. Secondly, we provide 
	numerical examples for the asymptotic analysis of other parameters. For the 
	multivariate common shock Poisson distribution, in \Cref{as an common prop1}
	, we set $d=10$, $\theta_j=6$ for $j=1, \dots, 10$ and $j \ne i$; in  %
	\Cref{as an common prop2}, we set $d=2$, $\theta_1=\theta_2=2$. For the 
	multivariate comonotonic shock Poisson distribution, in  
	\Cref{as an
	comonotonic prop1}, we set $d=3$, $\theta=1/2$, $\lambda_1=\lambda_2=20$;
	in  \Cref{as an comonotonic prop2}, we set $d=3$, $\lambda_1=\lambda_2=
	\lambda_3=10$; in \Cref{as an comonotonic prop3}, we set $d=3$, $
	\lambda_1=\lambda_2=\lambda_3=5$. For the multivariate thinning-dependence 
	Poisson distribution, in \Cref{as an thin prop1}, we set $l=1$, $d=10$, $
	\theta_1=10$, $p_j^1=0.5$ for $j=1, \dots, 10$ and $j \ne i$; in  
	\Cref{as an
	thin prop2}, we set $l=1$, $d=10$, $\theta_1=10$, $p_j^1=0.5$ for $j=1, 
	\dots, 10$ and $j \ne i$. The results are presented in \Cref{as an table2}. 
	The findings in both \Cref{as an d table} and \Cref{as an table2} indicate 
	that our obtained CDFs converge well to the asymptotic results.
	
	\begin{table}[h!]
	\footnotesize
	\caption{Numerical results for the asymptotic analysis of dimension $d$,
		illustrating the ratio of the asymptotic results (AR) to its corresponding
		maximum distribution (FM) when $x=7$.}
	\label{as an d table}
	\centering
	\setlength{\abovecaptionskip}{0.1cm} \setlength{\tabcolsep}{8pt}  %
	\renewcommand{\arraystretch}{1.4}  
	\begin{tabular}{llccccc}
		\hline
		\textbf{Dependent Structure} & \textbf{Proposition} & \textbf{d = 10} & 
		\textbf{d = 20} & \textbf{d = 50} & \textbf{d = 70} & \textbf{d = 200} \\ 
		\hline
		\multicolumn{1}{l|}{Common} & \Cref{as an common prop3} & 0.35464 & 0.71591
		& 0.99193 & 0.99938 & 1 \\ 
		\multicolumn{1}{l|}{Comonotonic} & \Cref{as an comonotonic prop4} & 0.15952
		& 0.35401 & 0.80749 & 0.92822 & 0.99389 \\ 
		\multicolumn{1}{l|}{Thinning} & \Cref{as an thin prop3} & 0.83133 & 0.91117
		& 0.98441 & 0.99520 & 1 \\ \hline
	\end{tabular}%
	\end{table}
	
	\begin{table}[h!]
	\footnotesize
	\caption{Numerical results for the asymptotic analysis of other parameters
		are presented, illustrating the ratio between the FM and its AR when $x=7$.}
	\label{as an table2}
	\centering
	\setlength{\abovecaptionskip}{0.1cm} \setlength{\tabcolsep}{6pt}  %
	\renewcommand{\arraystretch}{1.4} \centering
	\begin{tabular}{c|ccccccc}
		\hline
		\multirow{4}{*}{Common} & \multirow{2}{*}{\Cref{as an common prop1}} & $%
		\theta_i$ & 10 & 100 & 200 & 1000 & 5000 \\ 
		&  & FM/AR & 0.08830 & 0.71232 & 0.84105 & 0.96542 & 0.99296 \\ \cline{2-8}
		& \multirow{2}{*}{\Cref{as an common prop2}} & $\theta_0$ & 10 & 50 & 500 & 
		5000 & 8000 \\ 
		&  & AR/FM & 0.07647 & 0.42064 & 0.89728 & 0.98890 & 0.99304 \\ \hline
		\multirow{6}{*}{Comonotonic} & \multirow{2}{*}{\Cref{as an comonotonic
				prop1}} & $\lambda_i$ & 10 & 20 & 40 & 50 & 100 \\ 
		&  & FM/AR & 0.00229 & 0.06308 & 0.91665 & 1 & 1 \\ \cline{2-8}
		& \multirow{2}{*}{\Cref{as an comonotonic prop2}} & $\theta$ & 0.66 & 0.46 & 
		0.31 & 0.11 & 0.01 \\ 
		&  & FM/AR & 0.08487 & 0.09044 & 0.14666 & 0.46898 & 0.99210 \\ \cline{2-8}
		& \multirow{2}{*}{\Cref{as an comonotonic prop3}} & $\theta$ & 0.0005 & 
		0.3005 & 0.5005 & 0.9005 & 0.99999 \\ 
		&  & FM/AR & 0.65094 & 0.69323 & 0.72714 & 0.88655 & 0.99998 \\ \hline
		\multirow{4}{*}{Thinning} & \multirow{2}{*}{\Cref{as an thin prop1}} & $%
		p_i^k $ & 0.009 & 0.309 & 0.509 & 0.709 & 0.959 \\ 
		&  & FM/AR & 0.56523 & 0.57256 & 0.64028 & 0.80086 & 0.99029 \\ \cline{2-8}
		& \multirow{2}{*}{\Cref{as an thin prop2}} & $p_i^k$ & 0.951 & 0.751 & 0.551
		& 0.251 & 0.001 \\ 
		&  & FM/AR & 0.46755 & 0.77821 & 0.95398 & 0.99972 & 1 \\ \hline
	\end{tabular}%
	\end{table}
	
	\section{Discussion and conclusion}\label{sec6}
	
	In this paper, we comprehensively investigate the distribution of the
	maximum and minimum of multivariate Poisson distributions, which is the
	cornerstone of the discrete distributions. To the best of our knowledge,
	there is very limited literature on the maximum distribution for discrete
	random vectors. Our work not only derives explicit maximum and minimum distribution functions for three types of multivariate Poisson distributions but also conducts asymptotic analyses for all derived CDFs. These results address a research gap in the literature, providing valuable insights into relevant studies on this topic. In particular, our asymptotic analysis may indicate a new way to study the dependence structures for other multivariate distributions.
	
	As a matter of fact, the Poisson distribution, being a classical discrete 
	distribution, has a wide range of applications, such as in credit risk \citep
	{liang2012reduced}, reinsurance \citep{yuen2015optimal}, and price modeling  
	\citep{holy2022modeling}. In future research, we also anticipate utilizing 
	our findings on the distribution of the maximum and minimum of multivariate 
	Poisson distributions to model such practical problems, providing new 
	perspectives and solutions to these important areas.

	
	\section*{Funding}
	This work is supported by the National Natural Science Foundation of China [grant number 12371474]; the Postgraduate Research \& Practice Innovation Program of Jiangsu Province [grant number KYCX24\underline{~}3353].

	\bibliographystyle{plainnat}
	\bibliography{max_ref}
\end{document}